\newtheorem{theorem}{Theorem}[section]
\newtheorem{lemma}[theorem]{Lemma}
\newtheorem{remark}[theorem]{Remark}
\newtheorem{proposition}[theorem]{Proposition}
\newtheorem{corollary}[theorem]{Corollary}
\newenvironment{proof}{\par\textbf{Proof:}\\}{\hfill$\square$\par}
\numberwithin{equation}{section}
\newcommand{\ee}{\textrm{e}}
\newcommand{\dd}{\,\textrm{d}}
\newcommand{\E}{\mathbb{E}}
\renewcommand{\P}{\mathbb{P}}
\newcommand{\V}{\mathbb{V}\textrm{ar}}
\renewcommand{\O}{\mathcal{O}}
\newcommand{\C}{\textit{cv}}
\newcommand{\equaldist}{\,{\buildrel d \over =}\,}
\newcommand{\limdist}{\,{\buildrel d \over \rightarrow}\,}
\newcommand{\I}{\textit{I}}
\newcommand{\II}{\textit{II}}
\newcommand{\III}{\textit{III}}
\newcommand{\IV}{\textit{IV}}
\newcommand{\VV}{\textit{V}}
\newcommand{\VI}{\textit{VI}}
\newcommand{\VII}{\textit{VII}}
\newcommand{\VIII}{\textit{VIII}}
\newcommand{\IX}{\textit{IX}}
\newcommand{\X}{\textit{X}}
\newcommand{\XI}{\textit{XI}}
\newcommand{\XII}{\textit{XII}}
\providecommand{\href}[2]{#2}
\title{Delays at signalised intersections with exhaustive traffic control\footnote{The research was done in the framework of the BSIK/BRICKS project, and of the European Network of Excellence Euro-NF.}}
\author{M.A.A. Boon\footnotemark[2]\\\href{mailto:marko@win.tue.nl}{marko@win.tue.nl} \and I.J.B.F. Adan\footnote{\textsc{Eurandom} and Department of Mathematics and Computer Science, Eindhoven University of Technology, P.O. Box 513, 5600MB Eindhoven, The Netherlands}\\\href{mailto:iadan@win.tue.nl}{iadan@win.tue.nl} \and E.M.M. Winands \footnote{Department of Mathematics, Section Stochastics, VU University, De Boelelaan 1081a, 1081HV Amsterdam, The Netherlands}\\\href{mailto:emm.winands@few.vu.nl}{emm.winands@few.vu.nl}\and D.G. Down\footnote{McMaster University, 1280 Main Street West, Hamilton, ON L8S 4L7, Canada.}\\\href{mailto:downd@mcmaster.ca}{downd@mcmaster.ca}}
\date{July, 2011}
\begin{document}
\maketitle

\begin{abstract}
In this paper we study a traffic intersection with vehicle-actuated traffic signal control. Traffic lights stay green until all lanes within a group are emptied. Assuming general renewal arrival processes, we derive exact limiting distributions of the delays under Heavy Traffic (HT) conditions. Furthermore, we derive the Light Traffic (LT) limit of the mean delays for intersections with Poisson arrivals, and develop a heuristic adaptation of this limit to capture the LT behaviour for other interarrival-time distributions. We combine the LT and HT results to develop closed-form approximations for the mean delays of vehicles in each lane. These closed-form approximations are quite accurate, very insightful and simple to implement. %We test this approximation on several existing intersections.

\bigskip\noindent\textbf{Keywords:} delays, intersection, vehicle-actuated traffic signals, polling, light traffic, heavy traffic, approximation
\end{abstract}

\section{Introduction}\label{introduction}

Traffic signals play an important part in the infrastructure of towns and cities all over the world, and waiting before a red traffic light has become an unavoidable nuisance in everyday life. It is obvious that it is important to find optimal settings, i.e. green and red times, for signalised intersections. When evaluating the quality of traffic signal settings, the most commonly used criterion for optimality is a weighted average of the expected vehicle delays. Surprisingly, however, the most commonly used formulas for calculating the mean delays are based on major simplifications of the actual situation encountered in reality (see, e.g., \cite{handboekcrow,hcm2000}). On the other hand, for most realistic cases hardly any good alternatives exist. Typical traffic lights settings can be divided into three strategies: fixed cycle, vehicle-actuated, and traffic-actuated signals. The fixed-cycle policy for signalised intersections is the oldest strategy, defining fixed red, amber and green times. Vehicle-actuated traffic signals have flexible green phases with minimum and maximum green times. Detectors gather information about the presence of vehicles at the different traffic flows and use this information to determine whether the best option is to stay green, or switch to an all-red phase. Traffic-actuated signals are much like vehicle-actuated signals, with the additional possibility that actual queue lengths are determined. The combined information about all queue lengths can be used to create more sophisticated traffic signal settings. In the present paper we study a signalised intersection with vehicle-actuated, \emph{exhaustive} traffic control. The policy is exhaustive, because a green phase ends as soon as no vehicle is present in any flow that faces a green light. The main advantage of an exhaustive control policy is its efficiency, due to the fact that the traffic lights do not turn red until all vehicles in the corresponding flows have left. This implies that the exhaustive policy minimises the mean total amount of unprocessed work in the system, i.e., the total time required by all vehicles present at the intersection to discharge at the corresponding saturation flow rates. Newell \cite{newell98} argues that, for isolated intersections, an exhaustive control policy should be preferred over alternative strategies. Nevertheless, a well-known disadvantage of the exhaustive control policy is its unfairness with respect to vehicles in flows where relatively few vehicles arrive. For this reason, in practice often time-limited control policies are used.

Although the model in the present paper is vehicle-actuated, we will give a short literature review about fixed-cycle traffic signals first. Intersections based on a fixed cycle have been studied since a long time. One of the first, and perhaps still the most influential and practically applied papers, is written by Webster \cite{webster58} who analyses a fixed-cycle traffic-light queue. Although more sophisticated analyses have appeared throughout the years (cf. \cite{heidemann94,miller63,newell65,leeuwaarden06}), his formula for the mean delay is still used in most traffic engineering manuals. The present paper does not focus on settings based on a fixed cycle or on traffic-actuated signals, but on vehicle-actuated traffic signal control. Considering this is the most commonly used control type nowadays, it is surprising how little mathematical literature is available to analyse a typical, realistic vehicle-actuated signalised intersection. The earliest literature on vehicle-actuated systems dates from the early 1960s when Darroch et al. \cite{darroch64} analysed a system consisting of two intersecting Poisson traffic streams that are served exhaustively. A model with two lanes that does not assume Poisson input has been studied by Lehoczky \cite{lehoczky72}, who uses an alternating priority queueing model. Newell \cite{newell1} analyses an intersection with two one-way streets  using fluid and diffusion queueing approximations.
In \cite{newell2}, Newell and Osuna study a four-lane intersection where two opposite flows face a green light simultaneously. A variation of the two-lane intersection is introduced by Greenberg et al. \cite{greenberg88}, who analyse mean delays on a single rail line that has to be shared by trains arriving from opposite directions. This model is extended by Yamashita et al. \cite{yamashita06}, who study alternating traffic crossing a narrow one-lane bridge on a two-lane road. In many of the discussed papers traffic is modelled as fluid passing through the road. These types of approximations are fairly accurate when the traffic intensity is relatively high, but do not perform very well if there is a lot of variation in the arrival or departure processes. Vlasiou and Yechiali \cite{vlasiouyechiali07} use a different approach, modelling a traffic intersection as a polling system, consisting of multiple queues with an infinite number of servers visiting each queue simultaneously. A disadvantage that all of the aforementioned papers have in common, is that the methods can be applied only to situations that are significant simplifications of modern intersections encountered in practice. Most of the papers focus on two or four lanes only, and Newell and Osuna \cite{newell2} have written one of the few papers studying an intersection where multiple flows of vehicles can receive a green light simultaneously. Haijema and Van der Wal \cite{haijemavanderwal07} study a model that allows for multiple groups of different flows, but their approach uses a Markov Decision Problem (MDP) formulation, which does not lead to closed-form, transparent expressions for the mean delay that can be used for optimisation purposes.
Summarising, in the literature on vehicle-actuated traffic signals, either the models are very simplified versions of reality, or the resulting algorithm or expression to determine the mean delays is so complex that it cannot be implemented for real-life intersections. Additional advantages, apart from the comprehensiveness, of a closed-form expression for the mean delay, is that it is perfectly suitable for optimisation purposes, and that it can easily be adapted to extensions of the model discussed in the present paper.

The goal of this paper is to provide a comprehensive, novel analysis for traffic intersections with a vehicle-actuated, exhaustive control policy.
The model in the present paper is more realistic than most vehicle-actuated models, in the sense that we allow combinations of multiple flows to receive a green light simultaneously, we take more types of randomness into account (e.g., in interarrival times \emph{and} interdeparture times), and we do not limit ourselves to Poisson arrivals. The main contribution of the paper is twofold. Firstly, we capture the limiting behaviour of the model under Light Traffic (LT) and Heavy Traffic (HT) conditions. The HT results give insight into the system behaviour when the intersection becomes saturated, which makes them very usable by themselves. The LT results describe the system when it is hardly exposed to any traffic at all. These results are mostly interesting because they form an essential building block for the second main contribution of the paper, a closed-form approximation for the mean delay. This approximation is created using an interpolation between the LT and HT limits, which results in a comprehensive expression that is very insightful, simple to implement, and suitable for optimisation purposes.

We use a \emph{polling model} to describe the traffic intersection.
A polling model is a queueing system with multiple queues of customers, served by a single server in a cyclic order. The switch of the server from one queue to the next requires some (possibly random) time, and is called a switch-over time. An advantage of using a polling model, with customers representing the vehicles, is that we can model randomness in the interarrival times, but also in the service times, corresponding to the times between two successive vehicles as they pass the stop line. When considering the features of a polling model, it seems like the natural way to model a traffic intersection.
However, traffic intersections do exhibit features that have not been studied in the polling literature before, which impels us to considerably extend the queueing analysis of these systems. In particular, the feature that multiple queues (corresponding to the different traffic flows) should be allowed to receive service simultaneously has not been investigated yet in the huge literature on polling systems (for good surveys on polling systems and their applications, see, for example, \cite{boonapplications2011sorms,levysidi90,takagi1988qap,vishnevskiisemenova06}). The main reason why it is difficult to analyse a polling system with simultaneous service of multiple queues, is that the system loses the so-called \emph{branching-property}. We do not discuss this property in more detail here, but we just mention that Resing \cite{resing93} and Fuhrmann \cite{fuhrmann81} have shown that for polling models satisfying this property, performance measures like cycle time distribution and waiting time distributions can be obtained. If this branching property is, however, not satisfied, the corresponding polling model defies an exact analysis except for some special (two-queue or symmetric) cases. In the present paper we show that, despite the fact that the branching property is not satisfied, the delays can still be studied under LT and HT conditions. Furthermore, using these LT and HT results we propose an accurate closed-form approximation of the mean delay for a signalised intersection with vehicle-actuated, exhaustive traffic control.

The structure of the present paper is as follows. In the next section we present an outline of the model and describe in more detail how a traffic intersection can be modelled using a polling model with simultaneous service of multiple queues. The notation required for the remainder of the paper is also introduced in Section \ref{modelandnotation}. In Section~\ref{HTsection} we study the distribution of the delays under  Heavy Traffic conditions. This means that we increase the arrival intensities and, hence, the load of the system until it reaches the point of saturation. In Section \ref{LTsection} we study the behaviour of the system under Light Traffic conditions. It goes without saying that this situation is rather opposite to the previous section, and mean delays of vehicles in an (almost) empty system are analysed. Using the mean delays under LT conditions, and the mean delays under HT conditions, we develop interpolations between these two limits in Section \ref{interpolationssection}. These interpolations can be used as approximations for the mean delay under any system load. In Section \ref{numericalresults} we discuss the accuracy of the approximations and show numerical results for three intersections, located in The Netherlands. We finish with some conclusions and topics for further research.

% Newell, G.F. - The rolling horizon scheme of traffic signal control 1998.pdf - geeft argumenten waarom ons model beter is dan time-limited.

\section{Model description and notation}\label{modelandnotation}

We model the traffic intersection as a polling system. A typical polling system consists of $N$ queues attended by a single server in a cyclic order. Each flow of vehicles, sometimes referred to as stream, corresponds to a queue in the polling system. Note that traffic approaching the intersection from the same direction, sharing one lane, but with different destinations (e.g., flow~9 in Figure \ref{fig:intersection}), is modelled as one single queue, whereas the same situation with two different lanes (e.g., flows 1 and 2) is modelled as two separate queues.
\begin{figure}[ht]
\begin{center}
\includegraphics[width=0.3\linewidth]{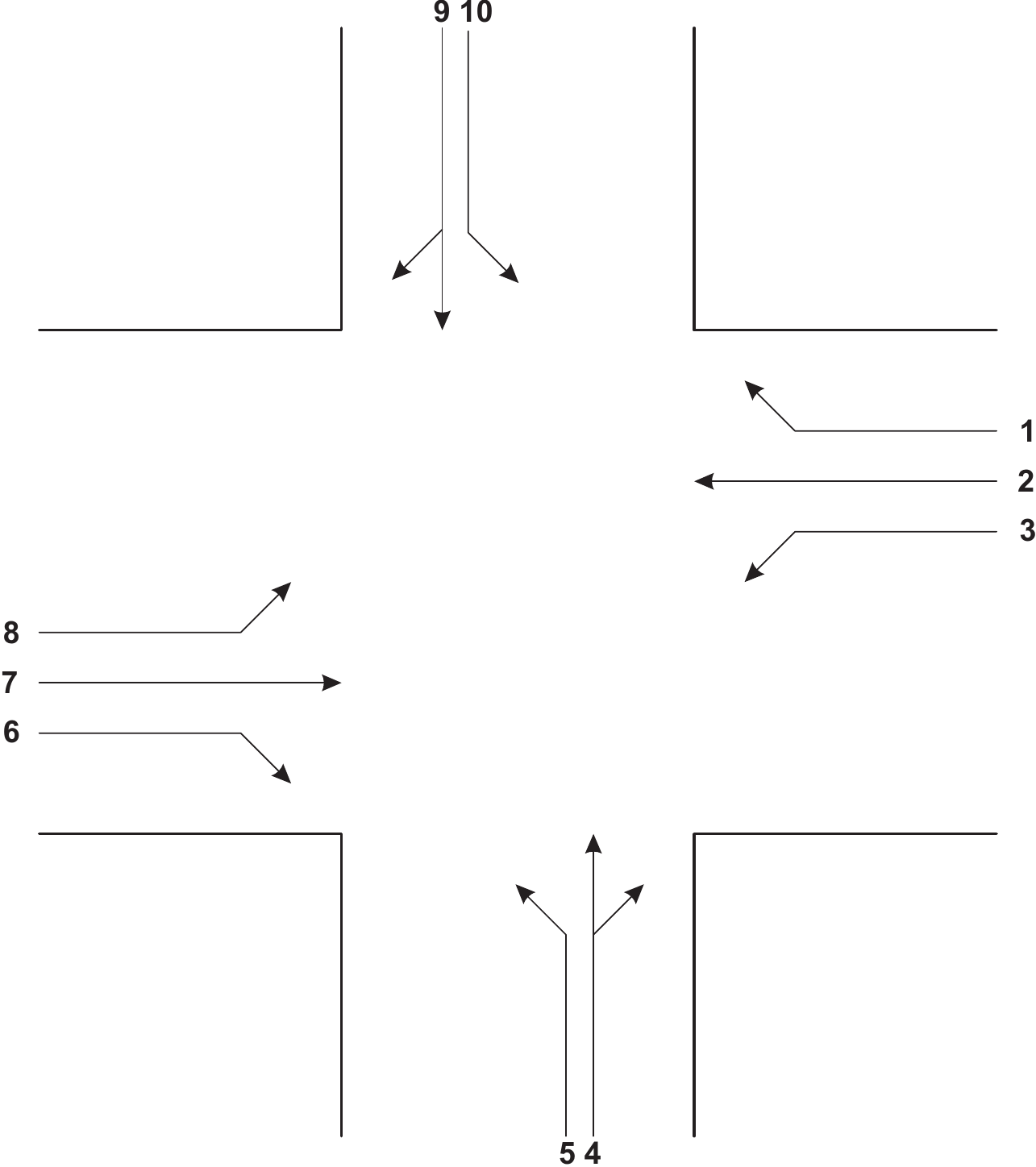}
\end{center}
\caption{An example of an intersection with traffic approaching from four directions.}
\label{fig:intersection}
\end{figure}
The main contribution of the present paper is that we extend the standard polling model by dividing the queues into groups that are served simultaneously, which turns out to complicate the queueing analysis significantly. We impose the restriction that each flow can be part of only one group. Note that the Highway Capacity Manual 2000 (HCM, \cite{hcm2000}) often does not use the terms ``flow'' or ``stream'', but prefers \emph{lane group}, defined as a set of lanes established at an intersection approach for separate capacity and level-of-service analysis. We will not adopt this terminology, except for a few times in this section, because it might be confused with a group (or combination) of non-conflicting flows that receive a green light simultaneously.
%For an intersection as shown in Figure \ref{fig:intersection}, flows $E2a$ and $E2b$ are part of the same lane group and therefore correspond to the same queue in the polling system.
A \emph{cycle} consists of multiple phases, where groups of non-conflicting flows receive the right-of-way simultaneously. The switch-over times in the polling model correspond to the all-red times of a traffic intersection. Adopting the terminology used in the polling literature, we refer to the green periods as \emph{visit times}, i.e., the times that the server visits a group of queues. Similarly, the red period of a group of flows may be referred to as \emph{intervisit time} of that group.
In the present paper we assume that the control policy, which is called service discipline in polling literature, is exhaustive service. Since multiple queues are served simultaneously, exhaustive service implies that a green period only ends when \emph{all} queues in the corresponding group are empty. The $N$ flows of the intersection are divided into a number of groups, denoted by~$M$. Each group, say group $g=1,\dots,M$, consists of $N_g$ nonconflicting flows of vehicles. We assume that each flow belongs to exactly one group, so $\sum_{g=1}^M N_g = N$. Denote by $R_g$ the all-red time starting at the end of the green period of group $g$, denoted by $G_g$. The total all-red time in a cycle, denoted by $R$, equals $R = \sum_{g=1}^M R_g$. Throughout this paper, we assume that the all-red times $R_g$ are independent random variables. In reality, all-red times are generally deterministic, which simplifies many results obtained in this paper.

The times between two consecutive vehicles (in the same flow) crossing the stop line are generally called \emph{departure headways} and will be denoted by $B_i$, $i=1,\dots,N$. In the polling model, these headways correspond to service times of customers, but in the literature on traffic signals it is more common to use the reciprocal, $1/\E[B_i]$, which is referred to as discharge rate or saturation flow rate. An advantage of adopting a polling model, is that it allows for randomness in the headways, which is generally ignored in the literature on traffic signals. This randomness enables us to distinguish between slow and fast, or between big and small vehicles. An important aspect of our model, is that we make a distinction between the headways of queued vehicles, and vehicles approaching the intersection without queue in front of them. If a queue clears before the green period terminates, all vehicles that arrive during the remainder of this visit period pass through the system and experience no delay whatsoever. This assumption is quite common in the literature on traffic light queues to distinguish between headways of cars that need to accelerate and cars that arrive at full speed (see, e.g., \cite{vandenbroek06,leeuwaarden06}), but it is not common in the polling literature. Furthermore, we assume that the headways $B_i$ are independent of each other and of all other random variables in the model. This assumption is generally not satisfied in practice, because the first few vehicles might require a slightly longer time to accelerate. However, this can be circumvented by incorporating any systematic, additional delay of the first cars crossing the intersection after an all-red phase in the preceding all-red time ($R_{g-1}$ for vehicles belonging to group $g=1,\dots,M$).

We assume that the interarrival times of vehicles, corresponding to customers in the polling system, are independent, generally distributed random variables $A_i$, $i=1,\dots,N$. The arrival \emph{rates} are denoted by $\lambda_i = 1/\E[A_i]$, $i=1,\dots,N$. The main performance measures of interest in the present paper are the delays $W_i$ of vehicles in flow $i$. Note that for a \emph{queued} vehicle, the delay is the waiting time \emph{plus} $B_i$, whereas vehicles approaching a clear intersection during a green period experience no delay at all. We study the delay as a function of the total traffic load offered to the system, denoted by~$\rho$. The load of a particular flow, say flow $i$, is the product of the arrival rate and the mean departure headway: $\rho_i = \lambda_i \E[B_i]$. The total load is the sum of the loads of the different flows: $\rho=\sum_{i=1}^N \rho_i$. The HCM refers to $\rho_i$ as the \emph{flow ratio} of lane group $i$, being the ratio of the actual flow rate $(\lambda_i)$ to the saturation flow rate $(1/\E[B_i])$ for lane group $i$ at an intersection. Since, starting from the next section, we consider the delay as a function of $\rho$, the total amount of traffic offered to the system, we have to specify in more detail how the scaling takes place. The traffic intensity is varied by keeping the headways $B_i$ fixed, and scaling the interarrival times $A_i$ (or arrival rates $\lambda_i$). We denote \emph{unscaled} quantities by putting a hat on the scaled quantities. The settings that we call ``unscaled'' correspond to the situation where $\hat\rho = \sum_{i=1}^N \hat\rho_i = 1$, which has the advantage that $\hat\rho_i$ can be interpreted as the fraction of the total traffic load that is routed to flow $i$. This means that, if the total load equals~$\rho$, the actual flow ratio of flow $i$ is $\rho_i = \hat\rho_i \times \rho$. Hence, the unscaled interarrival times $\hat{A}_i$ are the interarrival times that lead to a system with load 1:
\[
\sum_{i=1}^N\hat\rho_i = \sum_{i=1}^N\hat\lambda_i\E[B_i]=\sum_{i=1}^N\frac{\E[B_i]}{\E[\hat A_i]} = 1.
\]
The other scaled/unscaled quantities follow from the relation $A_i=\hat{A}_i/\rho$. Another interesting quantity is the departure headway of an \emph{arbitrary} vehicle (in any flow), denoted by $B$. Since an arbitrary vehicle arrives in flow $i$ with probability $\lambda_i/\sum_{j=1}^N\lambda_j$, we have
\[
\E[B]=\frac{1}{\sum_{j=1}^N\lambda_j}\sum_{i=1}^N\lambda_i\E[B_i]=\frac{\rho}{\sum_{j=1}^N\lambda_j}=\frac{1}{\sum_{i=1}^N\hat\lambda_i}.
\]
In the remainder of the paper, we use the subscripts $\{g,j\}$ for flow $j$ \emph{within} group $g$. Without loss of generality, we order the flows within a group according to their flow ratios: $\rho_{g,1} > \rho_{g,2} \geq \dots \geq \rho_{g,N_g}$. For example, supposing that in Figure \ref{fig:intersection} flows 4 and 9 are part of group 1, and flow 4 has a higher flow ratio than flow 9, we use the notation $B_{1,1}=B_4$, and $B_{1,2}=B_9$. The flows with the highest flow ratios, flows $\{g,1\}$ for $g=1,\dots,M$, are called \emph{dominant flows}, or \emph{critical lane groups} in the HCM. Since we study the limiting behaviour of the intersection as it becomes saturated, the stability condition is an important issue for our analysis.
\begin{theorem}\label{stabilitytheorem}
The system is stable (i.e., the intersection is undersaturated) if and only if
\[\sum_{g=1}^M \rho_{g,1} < 1.\]
\begin{proof}
The proof is provided in Appendix \ref{stabilityproof}.
\end{proof}
\end{theorem}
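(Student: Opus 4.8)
The plan is to analyze the system as a polling model with simultaneous service of groups under exhaustive discipline, and to identify the fraction of time each group must spend green in steady state (if it exists). The key observation is that, because service within a group is exhaustive, a green period for group $g$ does not end until \emph{every} flow in that group is empty; in particular, the dominant flow $\{g,1\}$ — the one with the largest flow ratio $\rho_{g,1}$ — is always the last to clear (or ties for last), so it governs the length of the green period of group $g$. This suggests that the right "necessary" bottleneck condition involves only the dominant flows.

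First I would establish necessity: consider the subsystem consisting only of the $M$ dominant flows $\{g,1\}$, $g=1,\dots,M$, which behaves like an ordinary (non-simultaneous) exhaustive polling system with $M$ queues, since at most one dominant flow is served at any time. A standard work-conservation / mean-cycle argument for exhaustive polling shows that the long-run fraction of time the server spends on queue $\{g,1\}$ equals $\rho_{g,1}$, and these fractions plus the fraction of time spent on all-red switch-overs must sum to at most $1$; hence $\sum_{g=1}^M \rho_{g,1} < 1$ is necessary for positive recurrence (with equality giving a null-recurrent / saturated boundary case, and the switch-over times $R_g$ forcing strict inequality). I would make this rigorous either via a rate-conservation argument on the embedded process at polling instants, or by a fluid-limit / sample-path lower bound showing the dominant-flow workload diverges when $\sum \rho_{g,1}\ge 1$.

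For sufficiency I would use a fluid-limit (Rybko–Stolyar / Dai-type) stability argument, or a direct Lyapunov/drift argument on the total workload vector. The intuition: each non-dominant flow $\{g,j\}$ with $j\ge 2$ has $\rho_{g,j}\le\rho_{g,1}$, so it empties no later than the dominant flow during its group's green period and therefore adds no extra service demand beyond what the dominant flow already requires — the group's green duration is asymptotically determined by $\{g,1\}$. Consequently the fluid model reduces to that of the $M$-queue dominant-flow polling system, whose fluid model drains to zero in finite time precisely when $\sum_{g=1}^M\rho_{g,1}<1$; the usual theorem then yields positive recurrence of the original chain, and hence finite mean delays.

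The main obstacle will be handling the non-dominant flows carefully in the fluid limit: although $\rho_{g,j}\le\rho_{g,1}$ guarantees that $\{g,j\}$ clears before the end of a "typical" green period, one must rule out pathological transient build-ups in which a non-dominant queue temporarily carries more fluid than the dominant one (e.g., after a long red period in which the non-dominant arrival burst dominated), and show such imbalances are corrected within $O(1)$ cycles rather than persisting. Making precise the claim "the group green period is governed by the dominant flow" at the fluid-model level — i.e., showing the fluid trajectories of the within-group queues order themselves consistently with the flow ratios after an initial relaxation — is the crux; once that ordering is in place, the reduction to the classical exhaustive-polling stability condition $\sum_g \rho_{g,1}<1$ is routine.
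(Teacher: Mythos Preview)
Your plan is correct and matches the paper's approach almost exactly: the paper also uses a Dai-type fluid-limit argument, shows that after one complete service cycle (at a finite time $t_M$) the fluid queues satisfy the ordering $\bar Q_{g,1}(t)/\mu_{g,1}\ge \bar Q_{g,j}(t)/\mu_{g,j}$ for all $j$, and then takes the Lyapunov function $V(t)=\sum_{g=1}^M \bar Q_{g,1}(t)/\mu_{g,1}$ on the dominant flows to obtain drift $\sum_g\rho_{g,1}-1$. The ``initial relaxation'' obstacle you flag is exactly what the paper resolves by the explicit one-cycle computation of $t_1,\dots,t_M$, after which the ordering is permanent; for necessity the paper uses the fluid-model instability side of the same $V$ rather than a mean-cycle/rate-conservation argument, but this is the same idea.
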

Note that only the dominant flows play a role in this stability condition. We study the steady-state of stable systems only, so throughout the paper we assume that
\[0 \leq L\rho < 1,\]
where $L={\sum_{g=1}^M \hat\rho_{g,1}}$, i.e., the total relative load of the dominant flows. The quantity $L\rho$ is called the \emph{critical volume-to-capacity ratio} in the HCM, defined as the proportion of available intersection capacity used by vehicles in critical lane groups. Sometimes it is more comprehensive to use $L\rho$ instead of $\rho$, because its value is always between 0 and 1, for all stable intersections. It turns out to be convenient to introduce $\rho_{g,\bullet} = \sum_{j=1}^{N_g}\rho_{g,j}$ as the load of group $g$, and $\lambda_{g,\bullet} = \sum_{j=1}^{N_g}\lambda_{g,j}$ as the total arrival rate of group $g$.

Finally, the (equilibrium) residual length of a random variable $X$ is denoted by $X^\textit{res}$, with $\E[X^\textit{res}] = \E[X^2]/2\E[X]$. See, e.g., \cite{cohen82}, pp. 108 -- 115, for more information.

\section{Heavy traffic}\label{HTsection}

In the present section we study an intersection with exhaustive control policy under Heavy Traffic conditions. This means that we increase the load of the system until it reaches the point of saturation. From Theorem \ref{stabilitytheorem} we learn that the critical load for which the point of saturation is reached, is completely determined by the dominant flows in each group: the system becomes saturated as $\sum_{g=1}^M \rho_{g,1} \rightarrow 1$, which is equivalent to $L\rho \rightarrow 1$.
As the total load of the system, $\rho$, increases, the green times, the cycle times and waiting times become larger and will eventually grow to infinity. For this reason, we scale them appropriately and consider the scaled versions. Since we consider \emph{finite} all-red times, they become negligible compared to the waiting times as the load is increased. Polling systems under HT conditions have been studied by Coffman et al. \cite{coffman95,coffman98}, and by Olsen and Van der Mei \cite{olsenvdmei03,olsenvdmei05}. The key observation in these papers, is the occurrence of a so-called Heavy Traffic Averaging Principle (HTAP). When a polling system becomes saturated, two limiting processes take place. Let $V$ denote the total workload of the system. As the load offered to the system, $\rho$, tends to~1, the scaled total workload $(1-\rho)V$ tends to a Bessel-type diffusion. However, the work \emph{in each queue} is emptied and refilled at a faster rate than the rate at which the total workload is changing. This implies that during the course of a cycle, the total workload can be considered as constant, while the loads of the individual queues fluctuate like a fluid model. The HTAP relates these two limiting processes and provides expressions for the stationary distributions of the scaled cycle times, switch-over times, and waiting times. In order to derive the HT limits for traffic intersections, we introduce and analyse a novel \textit{fluid model}. Subsequently, we adapt and extend the HTAP to relate the results of this fluid model with the original traffic intersection model.

We introduce a fluid model, with work flowing in at constant rate $\lim_{\rho\rightarrow 1/L}\rho_{g,j}=\hat\rho_{g,j}/L$ for flow $\{g,j\}$. The all-red times are not considered in the fluid model, because under HT conditions they become negligible. A graphical illustration of the fluid model is presented in Figure~\ref{fig:htfluid}.
\begin{figure}[h!]
\begin{center}
\includegraphics[width=\linewidth]{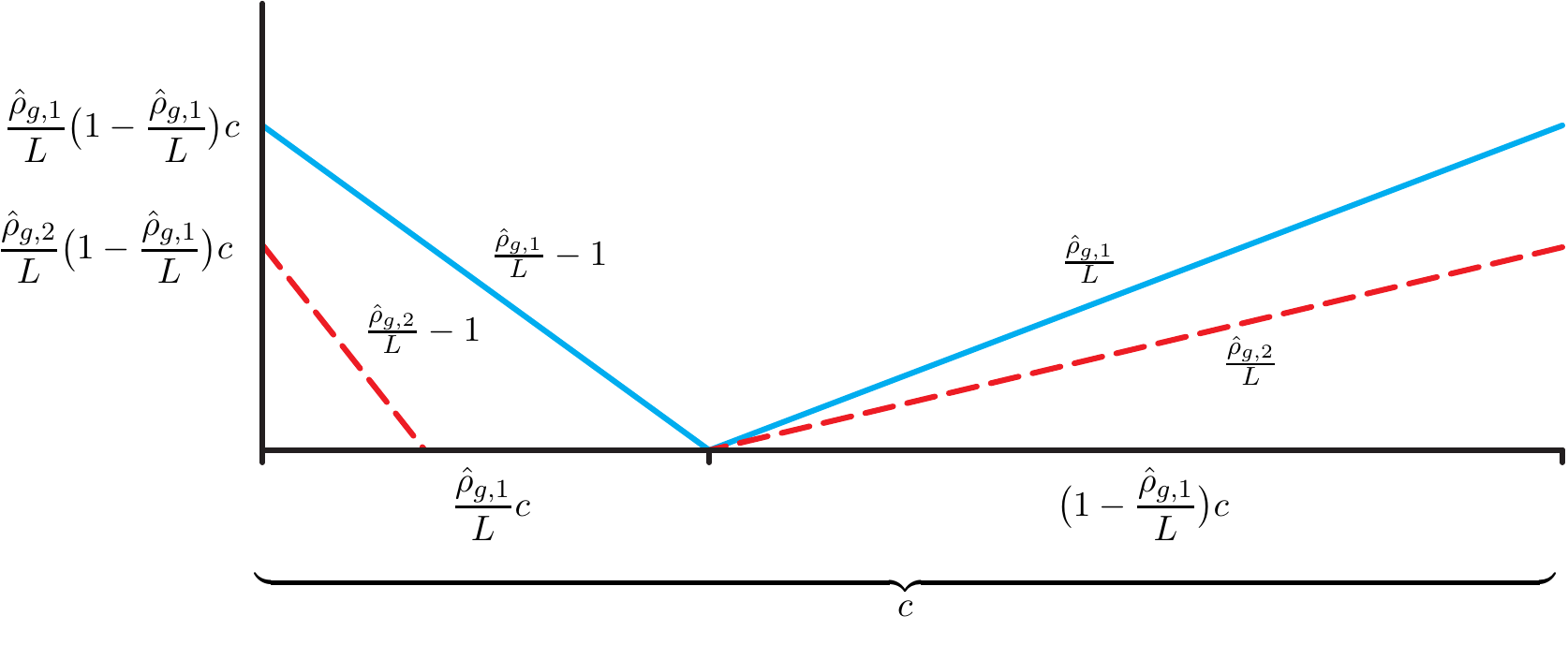}
\caption{Heavy traffic fluid limits.}
\label{fig:htfluid}
\end{center}
\end{figure}
On the horizontal axis, the course of a cycle with length $c$ is plotted. On the vertical axis, the scaled workloads in flows $\{g,1\}$ and $\{g,2\}$ are plotted. Because the length of the green periods is determined by the dominant flows, the system becomes saturated as $\sum_{g=1}^M \rho_{g,1} =L\rho\rightarrow 1$. A formal proof of this statement is provided in Appendix \ref{stabilityproof}. For the total load offered to the system, this translates to $\rho\rightarrow \frac{1}{L}$. We (arbitrarily) start the cycle at the moment that the traffic lights for the flows in group $g$ turn green. For now, during the first part of the analysis, we assume that the amount of work at the beginning of a cycle is fixed.
This implies that the length of a cycle, denoted by $c$, is also fixed because it is determined by the amount of work present at the beginning of the cycle. Throughout the cycle, work arrives with intensity $1/L$ and a fraction $\hat\rho_{g,j}$ is directed to flow $\{g,j\}$. During the green periods, work flows out of each stream at rate 1 as long as it is not empty. As soon as the stream is empty, it stays empty (hence, the work flows out at rate $\hat\rho_{g,j}/L$) until the end of the green period. As a consequence, although the  total amount of work in the system at the end of a cycle is back to the level of the beginning of a cycle, it varies throughout the cycle. However, if we consider the workload in \emph{dominant} flows only, the total workload remains constant, because it flows out at rate 1, and flows in at rate $\sum_{g}\hat\rho_{g,1}/L = 1$. This result follows directly from the observation that in the fluid model, the non-dominant flows do not contribute to the cycle length and, hence, do not influence the total workload in the dominant flows.

From the viewpoint of fluid in flow $\{g,j\}$, a cycle of length $c$ consists of three parts. During the first part, starting at the moment that the lights turn green, fluid starts to drain out of the system until flow $\{g,j\}$ is empty. The second part is the time between the emptying of flow $\{g,j\}$ and the moment that the dominant flow of the group, $\{g,1\}$ becomes empty and traffic lights turn red. The third part is the red period of group $g$. It is easily seen that the length of this third part is $\big(1-\hat\rho_{g,1}/L\big)c$, and the length of the first two parts together (the green period of group $g$) is $\frac{\hat\rho_{g,1}}{L}c$. Using Figure 2 we see that the length of the first part, with flow $\{g,j\}$ being non-empty, is $\frac{\hat\rho_{g,j}}{L}\frac{1-\hat\rho_{g,1}/L}{1-\hat\rho_{g,j}/L}c$. We denote the lengths of the three parts respectively by $P_j, P_1$, and $P_R$. The probability distribution of the delay of an arbitrary fluid particle arriving in flow $\{g,j\}$, denoted by $W^\textit{fluid}_{g,j}$, can now be computed.
\begin{theorem}\label{Wfluidtheorem}
\begin{equation}
W^\textit{fluid}_{g,j}
\equaldist \begin{cases}
0 & \qquad\text{w.p. }P_1/c, \\
U \times P_R,  & \qquad\text{w.p. }(P_j+P_R)/c,
\end{cases}\label{Wfluid}
\end{equation}
where $U$ is a uniformly distributed random variable on the interval $[0,1]$.
\begin{proof}
We condition on the arrival epoch of an arbitrary fluid particle in flow $\{g,j\}$. With probability $P_R/c$, the arrival takes place during the red period. The probability that the arrival takes place during the first part of the green period, when there is still other fluid present in flow $\{g,j\}$, is $P_j/c$. If the particle arrives during the second part of the green period, when there is no fluid present in flow $\{g,j\}$, its delay is 0. This happens with probability $P_1/c$. So we can write:
\begin{align*}
W^\textit{fluid}_{g,j} \equaldist
\begin{cases}
0 & \qquad\text{w.p. }P_1/c, \\
W_{g,j}^\textit{green} & \qquad\text{w.p. }P_j/c, \\
W_{g,j}^\textit{red} & \qquad\text{w.p. }P_R/c. \\
\end{cases}
\end{align*}
A particle arriving during the first part of the green period (with length $P_j$) has to wait until all the fluid in front of it has left the system. Let the uniform random variable $U_G$ denote the fraction of $P_j$ that has elapsed at the arrival epoch of this particle. Then the amount of fluid left in flow $\{g,j\}$ is $(1-\hat\rho_{g,j}/L)\times(1-U_G)P_j = \frac{\hat\rho_{g,j}}{L}\big(1-\hat\rho_{g,1}/L\big)(1-U_G)c$. Hence,
\begin{equation}
W_{g,j}^\textit{green} \equaldist \frac{\hat\rho_{g,j}}{L}(1-U_G)\left(1-\hat\rho_{g,1}/L\right)c.\label{wfluidgreen}
\end{equation}
The delay of a particle arriving during the red period can be analysed similarly. Let the uniform random variable $U_R$ denote the fraction of the red period that has elapsed at the arrival epoch of this particle. Then the amount of fluid present in flow $\{g,j\}$ is $\frac{\hat\rho_{g,j}}{L}\times U_R P_R$. The arriving particle has to wait for this amount of fluid to drain, after it has waited for the residual red period $(1-U_R)P_R$. Hence, we have
\begin{equation}
W_{g,j}^\textit{red} \equaldist \left(\frac{\hat\rho_{g,j}}{L} U_R+(1-U_R)\right)\big(1-\hat\rho_{g,1}/L\big)c.\label{wfluidred}
\end{equation}
If we study \eqref{wfluidgreen} and \eqref{wfluidred} more carefully, we see that $W_{g,j}^\textit{green}$ is uniformly distributed on the interval $[0, \frac{\hat\rho_{g,j}}{L}P_R]$ and  $W_{g,j}^\textit{red}$ is uniformly distributed on the interval $[\frac{\hat\rho_{g,j}}{L}P_R, P_R]$. Recall that $W^\textit{fluid}_{g,j} \equaldist W_{g,j}^\textit{green}$ with probability $P_j/c = \frac{\hat\rho_{g,j}/L}{1-\hat\rho_{g,j}/L}P_R/c=\frac{\hat\rho_{g,j}}{L}(P_j+P_R)/c$, and $W^\textit{fluid}_{g,j} \equaldist W_{g,j}^\textit{red}$ with probability $P_R/c = \left(1-\frac{\hat\rho_{g,j}}{L}\right)(P_j+P_R)/c$. This implies that $W^\textit{fluid}_{g,j}$ is uniformly distributed on $[0,P_R]$ with probability $(P_j+P_R)/c$, and it is 0 otherwise.
\end{proof}
\end{theorem}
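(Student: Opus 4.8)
The plan is to condition on the arrival epoch of a tagged fluid particle in flow $\{g,j\}$. Since fluid flows in at a constant rate and the cycle has fixed length $c$, this epoch is uniformly distributed over $[0,c]$, so the particle falls into the draining phase $P_j$, the idle-green phase $P_1$, or the red phase $P_R$ with probabilities $P_j/c$, $P_1/c$ and $P_R/c$ respectively. I would then compute the delay conditional on each of these three events, using throughout the FIFO rule that the delay of a fluid particle equals the amount of fluid currently ahead of it divided by the output rate of its stream at the relevant times. The case of arrival during $P_1$ is immediate: the stream is empty and has a green light, so there is no fluid ahead of the particle and its delay is $0$, which produces the atom with weight $P_1/c$.

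For arrival during $P_j$, I would parametrise the elapsed fraction of $P_j$ by a uniform random variable $U_G$ on $[0,1]$. Since in this phase the stream drains at net rate $1-\hat\rho_{g,j}/L$ and is empty exactly at the end of $P_j$, the fluid ahead of the particle is $(1-\hat\rho_{g,j}/L)(1-U_G)P_j$, which by the identity $(1-\hat\rho_{g,j}/L)P_j=(\hat\rho_{g,j}/L)P_R$ equals $(\hat\rho_{g,j}/L)(1-U_G)P_R$; dividing by the green-period output rate $1$ gives $W_{g,j}^\textit{green}\equaldist(\hat\rho_{g,j}/L)(1-U_G)P_R$, uniform on $[0,(\hat\rho_{g,j}/L)P_R]$. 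One small point I would flag explicitly is that this drain time never exceeds the remaining green time, so the output rate really does stay equal to $1$ throughout the particle's wait; this follows from $(1-\hat\rho_{g,j}/L)(1-U_G)P_j\le(1-U_G)P_j\le(1-U_G)P_j+P_1$. For arrival during $P_R$, parametrised by a uniform $U_R$, the particle first waits out the residual red period $(1-U_R)P_R$ and then, once the light turns green, waits for the fluid that accumulated ahead of it during the elapsed part of red, namely $(\hat\rho_{g,j}/L)U_R P_R$, to drain at rate $1$; this gives $W_{g,j}^\textit{red}\equaldist\big((\hat\rho_{g,j}/L)U_R+(1-U_R)\big)P_R$, uniform on $[(\hat\rho_{g,j}/L)P_R,P_R]$.

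It then remains to assemble the three-way mixture into the claimed two-case form. The two uniform pieces occupy the complementary subintervals $[0,(\hat\rho_{g,j}/L)P_R]$ and $[(\hat\rho_{g,j}/L)P_R,P_R]$ of $[0,P_R]$, so I only need to check that their densities coincide, i.e. $\frac{P_j/c}{(\hat\rho_{g,j}/L)P_R}=\frac{P_R/c}{(1-\hat\rho_{g,j}/L)P_R}$. This is precisely the relation $P_j=\frac{\hat\rho_{g,j}/L}{1-\hat\rho_{g,j}/L}P_R$, which follows from the formula for $P_j$ derived earlier together with $P_R=(1-\hat\rho_{g,1}/L)c$. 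Hence, conditional on arrival in $P_j\cup P_R$, the delay is uniform on $[0,P_R]$, and this event has probability $(P_j+P_R)/c$; combined with the atom at $0$ from the $P_1$ case, this yields \eqref{Wfluid}, with the uniform variable $U$ scaled by $P_R$.

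I do not expect a genuine obstacle: the argument is a clean case analysis. The only places that require care are modelling the fluid-particle delay correctly as ``work ahead divided by output rate'' under FIFO (in particular keeping track of the residual red period and of the fact that fluid arriving after the tagged particle does not delay it), and the final bookkeeping step where the green- and red-phase contributions splice together into a single uniform law — that is the one point where the exact value of $P_j$ is actually used.
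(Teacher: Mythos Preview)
Your proposal is correct and follows essentially the same route as the paper: condition on the arrival epoch, split into the three phases $P_j,P_1,P_R$, compute the conditional delay in each as residual red time plus FIFO drain time, observe that the green- and red-phase delays are uniform on the complementary subintervals $[0,(\hat\rho_{g,j}/L)P_R]$ and $[(\hat\rho_{g,j}/L)P_R,P_R]$, and then use $P_j=\frac{\hat\rho_{g,j}/L}{1-\hat\rho_{g,j}/L}P_R$ to show the densities agree and the two pieces splice into a single uniform on $[0,P_R]$. Your explicit check that the drain time during $P_j$ does not overrun the remaining green period is a nice addition not spelled out in the paper, but otherwise the arguments coincide.
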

Now that we have established the distribution of the delay of a particle in the fluid model, we can find the distribution of the scaled delay of a vehicle in the original model for the traffic intersection, under HT conditions.

\paragraph{Original model.} For ordinary polling models, the link between the fluid model and the polling model under HT conditions is the Heavy Traffic Averaging Principle. This principle states that the work \emph{in each queue} is emptied and refilled at a rate that is so much faster than the rate at which the total workload is changing, that during the course of a cycle, the total workload can be considered as constant, while the loads of the individual queues fluctuate like a fluid model.
A novel contribution of the present paper is the adaptation and extension of the HTAP for polling systems to the traffic intersection model.
Also in this model, when the system becomes saturated, the diffusion limits of the \emph{total} workload process and the workload in the individual flows can be related using the HTAP. The main difference is that in the fluid model for the traffic intersection, the total workload does not remain constant, but the total workload in the \emph{dominant} flows does. From the fluid model it has become clear that the non-dominant flows do not play a role in the length of the green periods, because the dominant flow in each group will always be the last flow that becomes empty. For this reason, we can ignore the non-dominant flows temporarily and focus on the workload in the dominant flows only. This turns our traffic light model into an ordinary polling model with exhaustive service. First, we define a random variable with a Gamma distribution as a random variable with probability density function
\[
f(t)=\frac{1}{\Gamma(\alpha)}\ee^{-\mu t}\mu^\alpha t^{\alpha-1}, \qquad t\geq 0,
\]
where $\Gamma(\alpha)=\int_0^\infty \ee^{-t}t^{\alpha-1}\dd t$. The positive parameters $\alpha$ and $\mu$ are respectively the scale and rate parameter.

When the load $\rho$ approaches~$1/L$, the system becomes overloaded and the queue lengths and waiting times tend to infinity. For this reason, we consider the \emph{scaled delay} $(1-L\rho)W_{g,j}$, which stays finite for $\rho\rightarrow 1/L$. Before we can formulate the main result of this section, the distribution of the scaled delay as the HT limit is approached, we need some lemmas. Note that the proofs of these lemmas all rely on the conjectures posed in \cite{olsenvdmei05}. Although these conjectures are widely accepted to be true, they have only been proven for systems consisting of two queues (cf. \cite{coffman95,coffman98}), systems with Poisson arrivals (cf. \cite{olsenvdmei03}), or for the \emph{means} rather than the complete distributions (cf. \cite{vdmeiwinands08}).
\begin{lemma}\label{lemmascaledworkHT}
Denote by $V_{\bullet,1}$ the amount of work \emph{in the dominant flows} of the intersection, at the beginning of a cycle. For $\rho \rightarrow 1/L$, $(1-L\rho)V_{\bullet,1}$ has a Gamma distribution with parameters $\alpha = 2\E[R]\delta/\sigma^2+1$ and $\mu = 2/\sigma^2$, where $\delta=\sum_{g=1}^M \frac{\hat\rho_{g,1}}{L}(1-\frac{\hat\rho_{g,1}}{L})/2$, and $\sigma^2=\sum_{g=1}^M \frac{\hat\lambda_{g,1}}{L}\left(\V[B_{g,1}]+\hat\rho_{g,1}^2\V[\hat A_{g,1}]\right)$.
% Note that \hat\rho_{g,1}^2 is correct: it should be divided by L, but the resulting 1/L^2 cancels out against
% Var(L\hat A_{g,1}} = L^2 Var(A_{g,1})
\begin{proof}
%In order to obtain the distribution of the scaled delay as the HT limit is approached, we first consider the (scaled) amount of work in the \emph{dominant flows} of each group in the system.
By the HTAP, the total workload in the \emph{dominant flows} of the system may be regarded as unchanged over the course of a cycle.
So if we regard the system with dominant flows only, Lemma \ref{lemmascaledworkHT} follows directly from Olsen and Van der Mei \cite{olsenvdmei05}, Conjecture 1. More specifically, we use the special case of cyclic, exhaustive service to obtain the distribution of the scaled amount of work in the dominant flows.
\end{proof}
\end{lemma}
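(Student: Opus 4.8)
The plan is to exploit the structural reduction already visible in the fluid analysis preceding the lemma: only the dominant flows matter for the green-period lengths, so the relevant workload process is that of an \emph{ordinary} cyclic exhaustive polling system, to which the known heavy-traffic results of Olsen and Van der Mei \cite{olsenvdmei05} apply directly. First I would make this reduction precise. From the fluid model we know that within each group $g$ the dominant flow $\{g,1\}$ is always the last to empty, so the non-dominant flows $\{g,j\}$, $j\geq 2$, influence neither the green-period lengths nor the cycle length, and hence do not enter the diffusion limit of the workload carried by the dominant flows. Deleting them leaves a standard cyclic polling model with $M$ queues (one per group), exhaustive service, service times $B_{g,1}$, interarrival times $A_{g,1}=\hat A_{g,1}/\rho$, switch-over times $R_g$ with total $R=\sum_{g=1}^M R_g$, and load $\sum_{g=1}^M\rho_{g,1}=L\rho$; this load tends to $1$ precisely when the intersection saturates, i.e.\ $\rho\rightarrow 1/L$.

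Next I would invoke the Heavy Traffic Averaging Principle for this reduced system: as $L\rho\rightarrow 1$ the scaled total workload $(1-L\rho)V(t)$ converges to a Bessel-type diffusion, while the individual queue workloads fluctuate on a faster time scale like the fluid model, so that over a cycle the total workload is essentially constant. Consequently $(1-L\rho)V_{\bullet,1}$ inherits the stationary law of that diffusion observed at cycle-start epochs, which by \cite{olsenvdmei05}, Conjecture~1, specialised to cyclic exhaustive service, is a Gamma law.

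It then remains to identify the two parameters by matching the drift and diffusion coefficients of the limiting process with the data of the reduced system: the drift is governed by the aggregate rate $\delta=\frac12\sum_{g=1}^M\frac{\hat\rho_{g,1}}{L}\big(1-\frac{\hat\rho_{g,1}}{L}\big)$ at which the exhaustive visits create room, and the diffusion coefficient by the total per-arrival workload variance $\sigma^2=\sum_{g=1}^M\frac{\hat\lambda_{g,1}}{L}\big(\V[B_{g,1}]+\hat\rho_{g,1}^2\V[\hat A_{g,1}]\big)$, yielding $\alpha=2\E[R]\delta/\sigma^2+1$ and $\mu=2/\sigma^2$.

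The main obstacle, and really the only non-bookkeeping part, is twofold: (i) justifying that deleting the non-dominant flows genuinely does not change the heavy-traffic limit of $V_{\bullet,1}$ --- which I would argue from the fluid picture together with the fact that the non-dominant workload is emptied within each visit and therefore contributes only lower-order fluctuations to the workload in the dominant flows --- and (ii) translating the differently-normalised quantities of \cite{olsenvdmei05} into the present scaling $A_i=\hat A_i/\rho$, so that the interarrival-time variance enters through $\hat\rho_{g,1}^2\V[\hat A_{g,1}]$ rather than through $\V[A_{g,1}]$. Since, as the text notes, the statement is in any case conditional on the Olsen--Van der Mei conjectures, the proof is essentially this reduction followed by the parameter identification.
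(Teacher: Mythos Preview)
Your proposal is correct and follows essentially the same route as the paper: reduce to the dominant flows so that the system becomes an ordinary cyclic exhaustive polling model, invoke the HTAP, and then read off the Gamma parameters from Olsen and Van der Mei \cite{olsenvdmei05}, Conjecture~1. Your write-up is in fact more explicit than the paper's about the reduction and the parameter translation, but the underlying argument is identical.
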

Before continuing, it is important to realise the interpretation of $\delta$. Assume, just like in Figure \ref{fig:htfluid}, that $c$ is the length of a cycle, starting at the beginning of green period $G_g$. Then the workload in flow $\{g,1\}$ at the beginning of the cycle is $\frac{\hat\rho_{g,1}}{L}\big(1-\frac{\hat\rho_{g,1}}{L}\big)c$. The flow is empty at the moment that a fraction $\frac{\hat\rho_{g,1}}{L}$ of the cycle has passed, and at the end of the cycle it has reached the same level as at the beginning. The mean workload in flow $\{g,1\}$ during this cycle is
$
%\frac1c\left(\frac12 \left(\frac{\hat\rho_{g,1}}{L}c\right)^2\big(1-\frac{\hat\rho_{g,1}}{L}\big)+\frac12 \frac{\hat\rho_{g,1}}{L}\big(1-\frac{\hat\rho_{g,1}}{L}\big)^2c^2\right)=
\frac12\frac{\hat\rho_{g,1}}{L}\big(1-\frac{\hat\rho_{g,1}}{L}\big)c$.
A summation over all dominant flows shows that $\delta c$ is the mean total workload of the dominant flows during the course of one cycle of length $c$

Given the scaled amount of work in the dominant queues at the start of a green period of group $g$, we can derive the distribution of the scaled \emph{cycle time} $(1-L\rho)C$. In fact, we consider the \emph{length-biased} (or time-averaged) scaled cycle time $(1-L\rho)\bm{C}$. If a random variable $X$ has probability density function $F_X(x)$, then we define the length-biased random variable $\bm{X}$ as a random variable with probability density function $f_{\bm{X}}(x)=x f_X(x)/\E[X]$. From renewal theory, we know that the length-biased cycle length accounts for the fact that an arbitrary arriving vehicle arrives with a higher probability during a long cycle, than during a short one.
The length of a cycle depends on the amount of work at the beginning of that cycle. Denote by $C(x)$ the length of a cycle, given that a total amount of $x$ work is present in the dominant flows. We are now ready to formulate the second lemma, needed to find the distributions of the scaled delays under HT conditions.
\begin{lemma}\label{lemmascaledintervisitHT}
Denote by $\bm{I}_g$ the length-biased red-time (or intervisit time) of group $g$, $g=1,\dots,M$. For $\rho \rightarrow 1/L$, we find that $(1-L\rho){\bm{I}_g}$ converges in distribution to a random variable having a Gamma distribution with parameters $\alpha = 2\E[R]\delta/\sigma^2+1$ and $\mu_g:=2\delta/\big(\sigma^2(1-\hat\rho_{g,1}/L)\big)$.
\begin{proof}
The proof proceeds along the same lines as the argument given to support Conjecture 2 in \cite{olsenvdmei05}. It uses Lemma \ref{lemmascaledworkHT} and the fact that, due to the averaging principle, the scaled workload in the dominant flows remains effectively constant. In steady-state, we have the following relation:
\[\delta C(x) = x .\]
This relation can easily be verified graphically from Figure \ref{fig:htfluid}, because the total amount of work in the dominant queues remains constant throughout the course of a cycle. Given a cycle length of $c$, the total amount of work in the dominant queues is $\sum_{g=1}^M \frac{\hat\rho_{g,1}}{L}\big(1-\frac{\hat\rho_{g,1}}{L}\big)c/2 = \delta c$. Hence, given an amount of work $x$ in the dominant flows, the cycle time is $C(x)=x/\delta$. Now we use Lemma \ref{lemmascaledworkHT}, which states that, in the HT limit, the scaled workload in the dominant flows has a Gamma distribution with parameters $\alpha$ and $\mu$. This implies that for $\rho\rightarrow 1/L$, the scaled, length-biased cycle time $(1-L\rho)\bm{C}$ follows a Gamma distribution with, again, scale parameter $\alpha$, but with rate parameter $\mu\delta$.
The distributions of the scaled length-biased intervisit times, denoted by $(1-L\rho)\bm{I}_g$ for group $g=1,\dots,M$, can now be determined. The intervisit time of group $g$ is the time that the signals in group $g$ are red.
Given that $x$ is the amount of work present at the dominant queues at the beginning of a cycle, the intervisit time conditioned on $x$ is obviously
\[
I_g(x) = C(x)\big(1-\frac{\hat\rho_{g,1}}{L}\big).
\]
The limiting distribution of $(1-L\rho){\bm{I}_g}$ now readily follows from the limiting distribution of $(1-L\rho){\bm{C}}$.
\end{proof}
\end{lemma}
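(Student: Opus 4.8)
The plan is to obtain the limiting law of $(1-L\rho)\bm{I}_g$ by composing two ingredients: the \emph{deterministic} functional relations between the intervisit time $I_g(x)$ of group $g$, the cycle length $C(x)$, and the amount of work $x$ in the dominant flows that can be read off from the fluid picture of Figure~\ref{fig:htfluid}, together with the limiting \emph{distribution} of the scaled dominant-flow workload supplied by Lemma~\ref{lemmascaledworkHT}. The bridge between the two is the Heavy Traffic Averaging Principle: after discarding the non-dominant flows (which, as already argued from the fluid model, never determine the length of a green period), the reduced system is an ordinary exhaustive polling model, and the HTAP guarantees that, as $\rho\rightarrow 1/L$, the total dominant-flow workload is asymptotically constant over the course of a cycle. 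Hence one may speak unambiguously of ``the workload $V_{\bullet,1}$ carried by a cycle'', and — precisely because this quantity does not vary within a cycle — the law given by Lemma~\ref{lemmascaledworkHT} is exactly the one registered by a randomly arriving vehicle, i.e.\ the one associated with a \emph{length-biased} cycle.

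Next I would write down the deterministic relations from the fluid geometry. Each dominant flow $\{g,1\}$ has a triangular workload profile over a cycle of length $c$, with peak $\frac{\hat\rho_{g,1}}{L}(1-\frac{\hat\rho_{g,1}}{L})c$ and therefore time-average $\frac12\frac{\hat\rho_{g,1}}{L}(1-\frac{\hat\rho_{g,1}}{L})c$; summing over $g$ gives $\delta c$ for the total dominant-flow workload, which by the constancy just described equals $V_{\bullet,1}$. This yields $C(x)=x/\delta$. Since the red period of group $g$ is exactly the part of the cycle outside its green period, whose relative length is $\hat\rho_{g,1}/L$, we obtain $I_g(x)=(1-\frac{\hat\rho_{g,1}}{L})\,C(x)=(1-\frac{\hat\rho_{g,1}}{L})\,x/\delta$. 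Because an arriving vehicle falls into a cycle with probability proportional to its length, the relevant quantity is the length-biased version, so in the limit $\bm{I}_g \equaldist (1-\frac{\hat\rho_{g,1}}{L})\,V_{\bullet,1}/\delta$.

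The conclusion then follows from the scaling property of the Gamma family. By Lemma~\ref{lemmascaledworkHT}, $(1-L\rho)V_{\bullet,1}$ converges to a Gamma random variable with parameters $\alpha=2\E[R]\delta/\sigma^2+1$ and $\mu=2/\sigma^2$. A deterministic linear rescaling of a Gamma variable is again Gamma with unchanged first parameter and correspondingly scaled rate, so $(1-L\rho)\bm{C}=(1-L\rho)V_{\bullet,1}/\delta$ converges to a Gamma distribution with parameters $\alpha$ and $\mu\delta$, and $(1-L\rho)\bm{I}_g=(1-\frac{\hat\rho_{g,1}}{L})(1-L\rho)\bm{C}$ converges to a Gamma distribution with parameters $\alpha$ and $\mu\delta/(1-\hat\rho_{g,1}/L)=2\delta/\big(\sigma^2(1-\hat\rho_{g,1}/L)\big)=\mu_g$, which is the claim.

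The genuine difficulty lies entirely in the first step: making the averaging argument legitimate, i.e.\ establishing that the scaled total dominant-flow workload becomes deterministic over a cycle as $\rho\rightarrow 1/L$ and that the reduction to an exhaustive polling model on the dominant flows is valid. Both of these rest on the (in general unproven) conjectures of Olsen and Van der Mei, rigorously available only for two-queue or Poisson-input systems, so the proof will have to proceed ``along the same lines'' as the heuristic support for those conjectures rather than from first principles, and one should flag that the interchange of the heavy-traffic limit with the fluid/averaging limit is being taken for granted. A secondary, bookkeeping-type point that still needs care is the length-biasing: one must verify that, exactly because the dominant-flow workload is constant within a cycle, the distribution of Lemma~\ref{lemmascaledworkHT} is already the one appropriate for a length-biased cycle, so that no extra increment of the shape parameter is incurred when passing from $V_{\bullet,1}$ to $\bm{C}$ and then to $\bm{I}_g$. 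The remaining manipulations — the triangular-profile computation and the Gamma change of variables — are routine.
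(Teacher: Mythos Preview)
Your proposal is correct and follows essentially the same route as the paper: reduce to the dominant-flow polling model via the HTAP, read off the deterministic relations $C(x)=x/\delta$ and $I_g(x)=(1-\hat\rho_{g,1}/L)C(x)$ from the fluid geometry, and then push the Gamma law of Lemma~\ref{lemmascaledworkHT} through these linear maps to obtain the parameters $\alpha$ and $\mu_g$. Your treatment is in fact slightly more careful than the paper's in two respects---you make explicit the Gamma scaling computation, and you correctly flag the length-biasing bookkeeping (that the constancy of $V_{\bullet,1}$ over a cycle means Lemma~\ref{lemmascaledworkHT} already delivers the length-biased law)---while honestly acknowledging, as the paper does, that the whole argument ultimately rests on the Olsen--Van der Mei conjectures.
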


\noindent
Finally, we formulate the main result of the present section.
\begin{theorem}\label{theoremscaleddelayHT}
As $\rho \uparrow 1/L$, the scaled delay is 0 with probability $\frac{\hat\rho_{g,1}-\hat\rho_{g,j}}{L-\hat\rho_{g,j}}$, and it is the product of a uniformly distributed random variable on $[0,1]$, denoted by $U$, and a random variable $\Gamma_I$ having the same distribution as the limiting distribution of $(1-L\rho)\bm{I}_g$, with probability $\frac{1-\hat\rho_{g,1}/L}{1-\hat\rho_{g,j}/L}$:
\begin{align}
(1-L\rho)W_{g,j} \limdist& \begin{cases}
0 & \qquad\text{w.p. }\frac{\hat\rho_{g,1}-\hat\rho_{g,j}}{L-\hat\rho_{g,j}}, \\
U \times \Gamma_I,  & \qquad\text{w.p. }\frac{1-\hat\rho_{g,1}/L}{1-\hat\rho_{g,j}/L},
\end{cases}
\label{scaleddelayHT}
\end{align}
for $\rho\rightarrow 1/L$.
\begin{proof}
A combination of Theorem \ref{Wfluidtheorem} and Lemma \ref{lemmascaledintervisitHT} yields the desired result. The scaled intervisit time $(1-L\rho){\bm{I}_g}$ converges in distribution to a random variable having a Gamma distribution with parameters $\alpha$ and $\mu_g$. The HTAP states that we can simply replace $P_R$, the deterministic red time in the fluid model, in \eqref{Wfluid} by the scaled, length-biased red time in the original model, $(1-L\rho){\bm{I}_g}$, because the random variables $U$ and $\bm{I}_g$ are independent.
\end{proof}
\end{theorem}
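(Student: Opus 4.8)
The plan is to obtain the claimed limiting law by transporting the fluid-model delay distribution of Theorem~\ref{Wfluidtheorem} to the original polling model via the (extended) Heavy Traffic Averaging Principle, the only genuinely new step being the replacement of the deterministic red time $P_R$ in \eqref{Wfluid} by the scaled, length-biased intervisit time $(1-L\rho)\bm{I}_g$ whose limit was identified in Lemma~\ref{lemmascaledintervisitHT}.

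First I would rewrite the mixing probabilities of \eqref{Wfluid} in terms of the relative loads. Setting $a=\hat\rho_{g,1}/L$ and $b=\hat\rho_{g,j}/L$, the fluid picture of Figure~\ref{fig:htfluid} gives $P_R=(1-a)c$, $P_j=b\frac{1-a}{1-b}c$, and $P_1=ac-P_j=\big(a-b\frac{1-a}{1-b}\big)c$, so that a one-line simplification yields
\[
\frac{P_1}{c}=\frac{a-b}{1-b}=\frac{\hat\rho_{g,1}-\hat\rho_{g,j}}{L-\hat\rho_{g,j}},\qquad \frac{P_j+P_R}{c}=\frac{1-a}{1-b}=\frac{1-\hat\rho_{g,1}/L}{1-\hat\rho_{g,j}/L}.
\]
These are exactly the two weights appearing in \eqref{scaleddelayHT}, and since they depend only on the fixed relative loads, they are already their own limits as $\rho\to 1/L$. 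Thus it remains to identify, on the non-degenerate event, the conditional limit of $(1-L\rho)W_{g,j}$ as $U\times\Gamma_I$.

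For that step I would invoke the averaging principle in the form adapted to the intersection model. Projecting onto the dominant flows turns the system into an ordinary exhaustive polling model (the non-dominant flows contribute nothing to the green-period lengths, cf.\ the fluid analysis and Appendix~\ref{stabilityproof}), so Lemmas~\ref{lemmascaledworkHT} and~\ref{lemmascaledintervisitHT} apply. The HTAP then asserts that, as $\rho\to 1/L$, a cycle of the original model behaves like one cycle of the fluid picture of Figure~\ref{fig:htfluid} up to $o(1)$ fluctuations, but with the red phase of group $g$ distributed as the heavy-traffic limit of $(1-L\rho)\bm{I}_g$, namely $\Gamma_I$. Conditioning a Palm-version tagged vehicle in flow $\{g,j\}$ on its arrival epoch within such a (length-biased) cycle, the very same three-way split as in the proof of Theorem~\ref{Wfluidtheorem} applies: delay $0$ on the part of the green phase where flow $\{g,j\}$ is already empty, and otherwise a delay equal to a uniform fraction of the red phase, the ``green'' and ``red'' contributions of \eqref{wfluidgreen}--\eqref{wfluidred} glueing into a single uniform multiple of $P_R$. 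Replacing $P_R$ by $\Gamma_I$ and using that the uniform randomisation of the arrival position is asymptotically independent of the realised intervisit time yields $(1-L\rho)W_{g,j}\limdist U\times\Gamma_I$ on that event, as claimed.

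The main obstacle is the rigorous justification of this substitution, i.e.\ that the HTAP for standard exhaustive polling systems (the conjectures of \cite{olsenvdmei05}) indeed extends to this model once the non-dominant flows are projected out, and that the uniform factor of the fluid analysis survives the limit as a genuinely independent random variable. Concretely one must argue that (i) the dominant-flow subsystem is self-contained in the limit; (ii) the scaled workload in the dominant flows is asymptotically constant over a cycle (Lemma~\ref{lemmascaledworkHT}), so that within a cycle the individual dominant-flow workloads trace out the deterministic sawtooth of Figure~\ref{fig:htfluid} up to vanishing fluctuations; and (iii) the position of the tagged arrival within a length-biased cycle becomes uniform and asymptotically independent of the cycle length, which is precisely what lets the residual red time and the residual in-front-of work combine into $U\times\Gamma_I$. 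Granting the Olsen--Van der Mei conjectures, all three points follow from the averaging principle together with standard renewal-reward / PASTA-type arguments, and the rest is bookkeeping.
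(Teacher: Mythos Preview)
Your proposal is correct and follows essentially the same approach as the paper: invoke the HTAP to transport the fluid-model delay distribution of Theorem~\ref{Wfluidtheorem} to the original model, replacing the deterministic $P_R$ by the scaled length-biased intervisit time whose limit is given by Lemma~\ref{lemmascaledintervisitHT}. You add the explicit verification that $P_1/c$ and $(P_j+P_R)/c$ simplify to the weights in \eqref{scaleddelayHT}, which the paper leaves implicit, and you are appropriately candid that the substitution step rests on the Olsen--Van der Mei conjectures, exactly as the paper acknowledges before Lemma~\ref{lemmascaledworkHT}.
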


For the approximations developed in Section \ref{interpolationssection}, the \emph{mean} scaled delay for $\rho\rightarrow 1/L$ will be used.
\begin{corollary}
\begin{equation}
\lim_{\rho\rightarrow \frac1L}(1-L\rho)\E[W_{g,j}] = \frac{\left(1-\hat\rho_{g,1}/L\right)^2}{1-\hat\rho_{g,j}/L}  \left(\frac{\E[R]}{2} + \frac{\sigma^2}{\delta}  \right),\label{EWht}
\end{equation}
where $\delta=\sum_{g=1}^M \frac{\hat\rho_{g,1}}{L}(1-\frac{\hat\rho_{g,1}}{L})/2$, and $\sigma^2=\sum_{g=1}^M \hat\lambda_{g,1}\left(\V[B_{g,1}]+\hat\rho_{g,1}^2\V[\hat A_{g,1}]\right)$.
\begin{proof}
The result immediately follows from \eqref{scaleddelayHT}.
\end{proof}
\end{corollary}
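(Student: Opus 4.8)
The plan is to read the mean directly off the limiting law \eqref{scaleddelayHT} established in Theorem \ref{theoremscaleddelayHT}, since that theorem already identifies the \emph{entire} limiting distribution of the scaled delay. The limit in \eqref{scaleddelayHT} is a two-point mixture: it equals $0$ with probability $\frac{\hat\rho_{g,1}-\hat\rho_{g,j}}{L-\hat\rho_{g,j}}$, and it equals $U\times\Gamma_I$ with the complementary probability $\frac{1-\hat\rho_{g,1}/L}{1-\hat\rho_{g,j}/L}$, where $U$ is uniform on $[0,1]$ and independent of $\Gamma_I$. Hence the mean of the limiting random variable is
\[
\frac{1-\hat\rho_{g,1}/L}{1-\hat\rho_{g,j}/L}\,\E[U]\,\E[\Gamma_I]=\frac12\,\frac{1-\hat\rho_{g,1}/L}{1-\hat\rho_{g,j}/L}\,\E[\Gamma_I],
\]
so the only nontrivial ingredient is $\E[\Gamma_I]$.

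Next I would evaluate $\E[\Gamma_I]$ using Lemma \ref{lemmascaledintervisitHT}: $\Gamma_I$ has a Gamma distribution with scale parameter $\alpha=2\E[R]\delta/\sigma^2+1$ and rate parameter $\mu_g=2\delta/\big(\sigma^2(1-\hat\rho_{g,1}/L)\big)$, so with the density convention fixed earlier in this section its mean is $\alpha/\mu_g$. A short simplification yields $\E[\Gamma_I]=(1-\hat\rho_{g,1}/L)\big(\E[R]+\sigma^2/(2\delta)\big)$. Substituting this into the display above, collecting the factor $1-\hat\rho_{g,1}/L$, and recording $\sigma^2$ in the normalisation stated in the corollary then produces \eqref{EWht}. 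This last part is pure bookkeeping, so I would not dwell on it.

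The step that genuinely deserves care --- and which the phrase ``immediately follows'' passes over --- is the passage from convergence in distribution in \eqref{scaleddelayHT} to convergence of the \emph{means}, i.e.\ uniform integrability of the family $\{(1-L\rho)W_{g,j}:\rho<1/L\}$ as $\rho\uparrow 1/L$. I expect this to be the main obstacle, and I would handle it either by combining the finiteness of the second moments of the limiting Gamma laws in Lemmas \ref{lemmascaledworkHT} and \ref{lemmascaledintervisitHT} with a uniform second-moment bound on the prelimit scaled workload, cycle time and intervisit time, or by invoking directly the heavy-traffic convergence of the \emph{mean} scaled waiting times (in the spirit of \cite{vdmeiwinands08}), which sidesteps the distributional limit for the purpose of the means. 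Once uniform integrability is secured, the limit of the means equals the mean of the limit, and the elementary computation above completes the proof.
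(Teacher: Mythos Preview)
Your approach is essentially the same as the paper's: compute the mean of the mixture in \eqref{scaleddelayHT}, using $\E[U]=\tfrac12$ and $\E[\Gamma_I]=\alpha/\mu_g$ from Lemma~\ref{lemmascaledintervisitHT}, and simplify. The paper's entire proof is the single sentence ``immediately follows from \eqref{scaleddelayHT}'', so your write-up is already more explicit than what the paper provides; in particular, your attention to uniform integrability (needed to pass from \eqref{scaleddelayHT} to convergence of the means) is a point the paper does not address at all---the surrounding lemmas themselves rest on the conjectures of \cite{olsenvdmei05}, and the corollary is treated at the same level of rigour.
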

Note that any possible variations in the all-red times $R_g$ have no influence on the mean scaled delay under HT conditions.

\begin{remark}
For Poisson arrivals $\sigma^2$ can be simplified to $\sigma^2 = \frac{\E[B_{\bullet,1}^2]}{\E[B_{\bullet,1}]}$, where $B_{\bullet,1}=\frac{\sum_{g=1}^M\hat\lambda_{g,1}B_{g,1}}{\sum_{g=1}^M\hat\lambda_{g,1}}$ is the headway of an arbitrary vehicle arriving in a dominant flow.
\end{remark}

\begin{remark}[Convergence to the HT limit]
Although the distribution of the scaled delay in the HT limit $\rho\uparrow 1/L$ is exact, it is interesting to know how fast this limiting distribution is approached. Unfortunately, the analysis does not provide any insight in the speed at which the scaled delay converges to this limiting distribution, so we have to resort to simulations. Denote by $p_{g,j}$ the steady-state probability that flow $\{g,j\}$ is the flow from which the last departure takes place before the corresponding green phase $G_g$ ends. This probability depends on $\rho$, the total load of the system. We have shown in this section that, in the limiting situation, the dominant flow of each group is \emph{always} the last flow to become empty in this group. For $\rho < 1/L$, the fraction of green periods in which the last departure indeed takes place from the dominant flow becomes smaller. In fact, in the LT limit one can easily calculate the exact value of $p_{g,j}$. In LT, the probability that the last vehicle in group~$g$ departs from flow $\{g,j\}$ is proportional to the arrival rates of the flows in this group. Summarising:
\begin{align*}
\lim_{\rho\uparrow 1/L} p_{g,j} &= \begin{cases}
1 & \qquad\textrm{if }j=1,\\
0 & \qquad\textrm{if }j> 1,
\end{cases}\\
\lim_{\rho\downarrow 0} p_{g,j} &= \frac{\hat\lambda_{g,j}}{\hat\lambda_{g,\bullet}}.
\end{align*}
This implies that we can regard $p_{g,j}$ as a measure for how close the distribution of the scaled delay is to the limiting distribution. In Figure \ref{accuracyHTpictures} we show an example that illustrates how $p_{g,j}$ might depend on $\rho$. The specific intersection chosen for this figure, has two groups of three flows each. The traffic intensities of the flows within each group are relatively close to each other. In this situation, it takes rather long before the probabilities $p_{g,j}$ converge to their limiting values for $\rho\rightarrow 1/L$. For example, in Figure \ref{accuracyHTpictures} one can see that at $L\rho=0.9$, the probability $p_{2,1}$ (corresponding to flow 6) is less than $0.7$. The fact that this value is still quite different from its limiting value 1, has a direct impact on the distribution of the scaled delay. The (simulated) mean scaled delay $(1-L\rho)\E[W_{2,1}]$ at $L\rho=0.9$ is $4.5$, whereas $\lim_{L\rho\uparrow1}(1-L\rho)\E[W_{2,1}]=3.5$. For this reason, one has to be careful when applying the results for $\rho\approx1/L$ to situations with smaller loads. In Section \ref{numericalresults}, we continue the discussion on the numerical accuracy. For more information about the intersection which is used for Figure \ref{accuracyHTpictures}, see Scenario \VV\ in Example 1.
\begin{figure}[!ht]
\mbox{}\hfill
\parbox{0.45\textwidth}{\centering
\includegraphics[width=\linewidth]{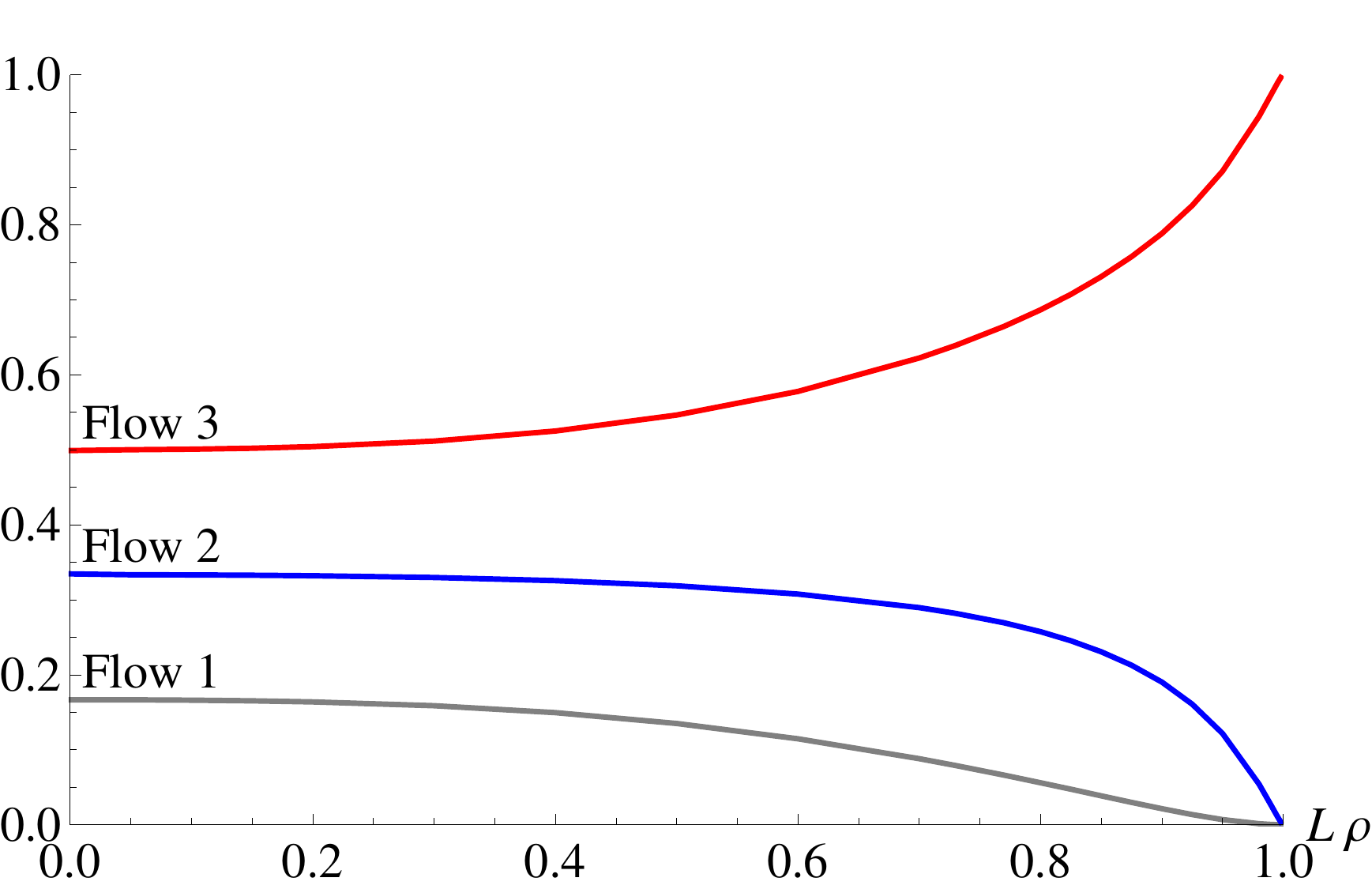}\\
Group 1
}
\hfill
\parbox{0.45\textwidth}{\centering
\includegraphics[width=\linewidth]{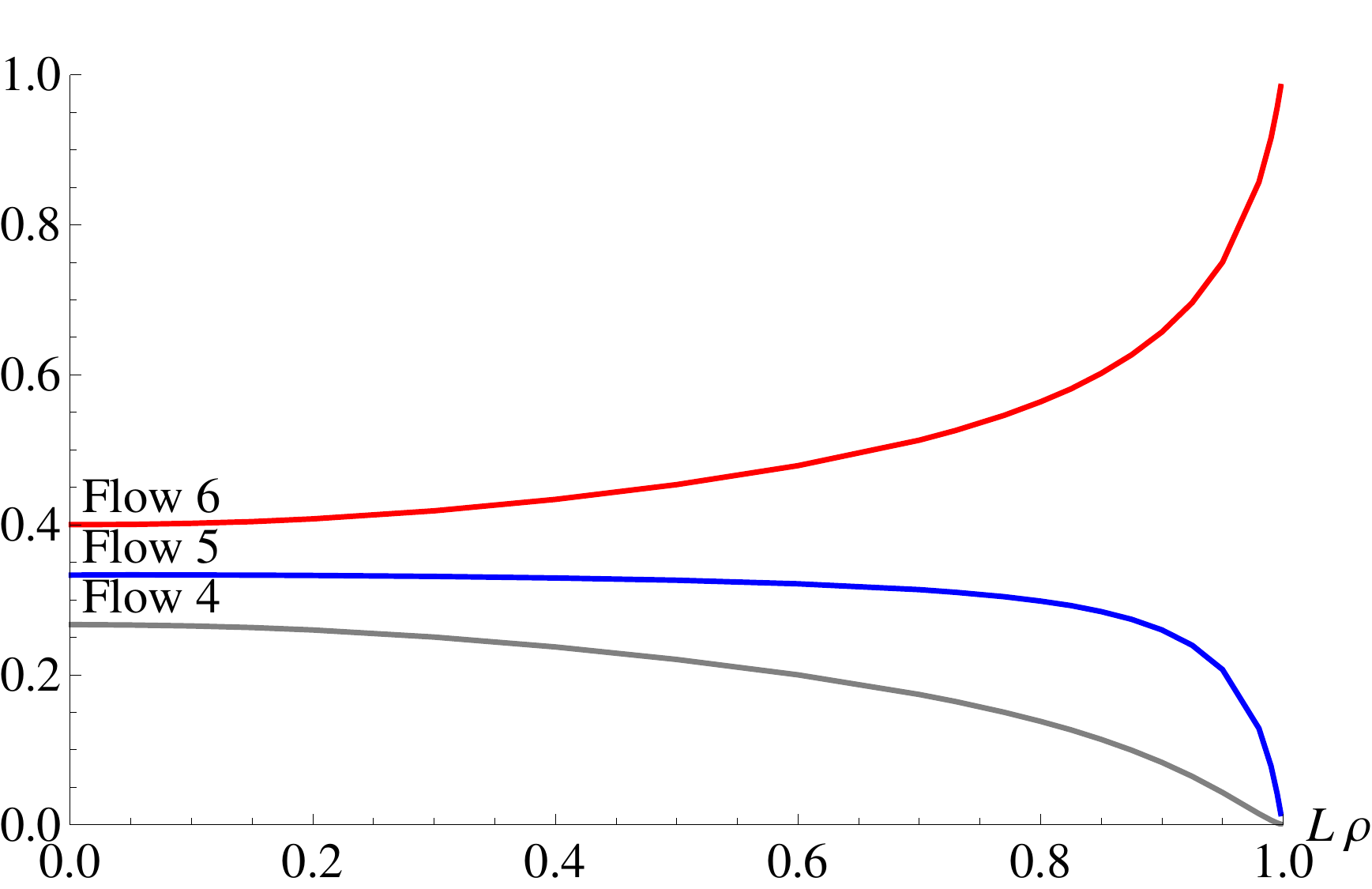}\\
Group 2
}
\hfill\mbox{}
\caption{Simulated fractions of the green periods where the corresponding flows have been the last in their groups from which a vehicle departs before the traffic lights turn red.}
\label{accuracyHTpictures}
\end{figure}
\end{remark}

\section{Light traffic}\label{LTsection}

In the present section we study the delay of vehicles arriving at the traffic intersection under Light Traffic conditions, i.e., for $\rho \downarrow 0$. In the first part of the section, we find an expression for the \emph{mean} delay of vehicles, assuming Poisson arrivals. Subsequently, we heuristically adapt this expression to find an approximation for the mean delay for intersections with general renewal arrivals. The analysis of the present section follows the lines of \cite{boonapprox2009}, where the derivation of the LT limit of mean waiting times for polling systems is discussed, with modifications for the specific traffic intersection model.

For the first part in this section we assume that the arrival processes in all flows are Poisson. Under this assumption, the LT limit of the mean delay can be determined by conditioning on the arrival epoch of an arbitrary vehicle in, say, flow $\{g,j\}$. %Although we do this in a rather heuristic manner, numerical validation seems to indicate that the obtained expression might be exact, at least in the case of exponential service times and all-red times.
By the LT limit of $\E[W_{g,j}]$, we mean the expression for $\E[W_{g,j}]$ as a function of the load in the system, $\rho$, up to $\O(\rho)$ terms for $\rho\downarrow0$. In the following theorem we formulate the main result for the system with Poisson arrivals.
\begin{theorem}\label{EWltPoissonTheorem}
Under the assumption of Poisson arrivals, the LT limit of the mean delay of an arbitrary type $\{g,j\}$ vehicle is
\begin{align}
\E[W_{g,j}^{\textit{LT,Poisson}}] =\,& \rho_{g,j} \left(\E[B_{g,j}^\textit{res}] +\E[B_{g,j}]\right)
+ \sum_{m=g-M+1}^{g-1} \sum_{k=1}^{N_m} \rho_{m,k}\left(\E[B_{m,k}^\textit{res}] + \sum_{l=m}^{g-1}\E[R_l]  +\E[B_{g,j}]\right)\nonumber\\
+\,& \sum_{m=g-M}^{g-1} \frac{\E[R_m]}{\E[R]}\left(E[R_m^{\textit{res}}]\Big(1-\rho+2\sum_{k=m+1}^{g-1}\rho_{k,\bullet} + \rho_{g,j} \Big) +\right.\nonumber\\
\,& \hspace{1cm}\left.
\sum_{k=g-M}^{m-1}\E[R_k] \Big(\sum_{l=m+1}^{g-1}\rho_{l,\bullet}+\rho_{g,j}\Big)
+\sum_{k=m+1}^{g-1}\E[R_k] \Big(1-\rho+\sum_{l=m+1}^{g-1}\rho_{l,\bullet}\Big)+(1-\rho)\E[B_{g,j}]\right)\nonumber\\
+\,&\O(\rho^2).
\label{EWltPoisson}
\end{align}
\end{theorem}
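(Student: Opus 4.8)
Proof proposal.

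\textbf{Setup.} The plan is to condition on the state of the intersection seen by the tagged type $\{g,j\}$ vehicle at its arrival epoch. Since all arrivals are Poisson, PASTA applies and the tagged vehicle observes the time-stationary state; writing $\E[W_{g,j}]=\sum_s\P(s)\,\E[W_{g,j}\mid s]$ over the (server position, queue contents) states $s$, I would first argue that only states with at most one \emph{other} vehicle present matter up to $\O(\rho^2)$. Indeed, the states with two or more other vehicles have total probability $\O(\rho^2)$ (the number present is, to leading order, a Poisson count with mean $\O(\rho)$), while the conditional delay grows at most linearly in the number of vehicles present because the all-red times $R_l$ and the headways $B_{g,j}$ do not scale with $\rho$; hence their combined contribution is $\O(\rho^2)$.

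\textbf{Zeroth order.} When no other vehicle is ever present, every green period has length $0$ (a green phase ends as soon as the served group is empty), so the server merely cycles through the all-red periods $R_1,\dots,R_M$. The tagged vehicle then arrives during $R_m$ with time-stationary probability $\E[R_m]/\E[R]$, leaving a residual red time distributed as $R_m^{\textit{res}}$; it waits out this residual, the instantaneous green periods $G_{m+1},\dots,G_{g-1}$ and the all-red periods $R_{m+1},\dots,R_{g-1}$, and is finally discharged during $G_g$ in time $B_{g,j}$. Summing over $m=g-M,\dots,g-1$ (cyclic indexing) gives $\sum_{m=g-M}^{g-1}\frac{\E[R_m]}{\E[R]}\bigl(\E[R_m^{\textit{res}}]+\sum_{k=m+1}^{g-1}\E[R_k]+\E[B_{g,j}]\bigr)$, which is exactly the $\rho\downarrow0$ limit of the asserted formula \eqref{EWltPoisson}.

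\textbf{First order.} The $\O(\rho)$ correction comes from the states in which exactly one other vehicle is present, which I would organise by its type $\{h,k\}$ and by whether it is being discharged or waiting. (i) If that vehicle is of type $\{g,j\}$ and being discharged during $G_g$ --- a fraction $\rho_{g,j}$ of the time --- the tagged vehicle waits the residual discharge $B_{g,j}^{\textit{res}}$ plus its own $B_{g,j}$; this yields the first term of \eqref{EWltPoisson}. (Another vehicle in a sibling flow $\{g,j'\}$, $j'\neq j$, discharges in parallel and adds nothing, which is why only $\rho_{g,j}$ appears.) (ii) If that vehicle is of type $\{m,k\}$ with $m\neq g$, i.e.\ $m\in\{g-M+1,\dots,g-1\}$, and being discharged during $G_m$ --- a fraction $\rho_{m,k}$ of the time --- the tagged vehicle waits $B_{m,k}^{\textit{res}}$, then the \emph{full} all-red periods $R_m,\dots,R_{g-1}$, then $B_{g,j}$; this yields the second term. (iii) In all remaining states the tagged vehicle arrives during an all-red period $R_m$ and the other vehicle (if any) is waiting for its green $G_h$; here one splits on whether $G_h$ falls strictly between the tagged arrival and $G_g$ --- in which case the interfering discharge lengthens the tagged delay by one headway --- or not, in which case the tagged delay equals its ``alone'' value. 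Computing the probabilities of these sub-cases (a waiting group-$k$ vehicle with $m<k<g$ may have arrived during the already-elapsed part of $R_m$, of expected length $\E[R_m^{\textit{res}}]$, or during an earlier all-red $R_{k'}$, contributing full $\E[R_{k'}]$'s) and subtracting the corresponding mass from the ``alone'' weight reproduces the $(1-\rho)$, $\rho_{g,j}$ and $\rho_{k,\bullet}$ corrections inside the bracket of the third term --- including the factor $2$ on $\sum_{k=m+1}^{g-1}\rho_{k,\bullet}$, reflecting two disjoint arrival windows for such an interfering vehicle. Collecting all $\O(\rho)$ contributions and discarding the $\O(\rho^2)$ remainder gives \eqref{EWltPoisson}.

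\textbf{Main obstacle.} The delicate step is (iii): under the cyclic ordering of the groups one must track exactly which interfering vehicles delay the tagged vehicle and by how much, and --- most subtly --- pair each contribution with the correct probability weight, distinguishing the residual all-red time $\E[R_m^{\textit{res}}]$ of the period the tagged vehicle itself arrives in from the full all-red times $\E[R_k]$ of earlier periods where an interfering vehicle could have arrived, and ensuring that the probability mass removed from the ``alone'' configuration is exactly the mass reallocated to the ``one interfering vehicle'' configurations, so that no $\O(\rho)$ term is double-counted or dropped.
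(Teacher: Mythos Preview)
Your approach is correct and essentially identical to the paper's: both apply PASTA, condition on the arrival epoch, and exploit that at most one other vehicle matters to order $\O(\rho)$. The paper's bookkeeping differs only cosmetically: rather than separating zeroth- and first-order contributions as you do, it conditions directly on the cycle phase via $\E[W_{g,j}]=\sum_k\bigl(\frac{\E[R_k]}{\E[R]}(1-\rho)\,\E[W_{g,j}^{(R_k)}]+\sum_l\rho_{k,l}\,\E[W_{g,j}^{(G_{k,l})}]\bigr)$, where the weights $(1-\rho)$ and $\rho_{k,l}$ come from $\E[C]=\E[R](1+\rho)+\O(\rho^2)$; your cases (i)--(ii) then match the green-phase conditionals exactly, and your case (iii) matches the paper's four-item decomposition of $\E[W_{g,j}^{(R_m)}]$, including the observation $\E[R_m^{\textit{past}}]=\E[R_m^{\textit{res}}]$ that produces the factor~$2$.
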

\begin{proof}
The proof is essentially a Mean Value Analysis (MVA) that ignores all $\O(\rho^2)$ terms, and focusses mainly on the amount of work instead of number of vehicles.
The first step is to condition on the arrival epoch of a vehicle in flow $\{g,j\}$. A cycle consists of the green phases $G_g$ and all-red phases $R_g$, for $g=1,\dots,M$. At the beginning of green time $G_g$, the probability that \emph{one} vehicle has arrived in any of the flows in group $g$ during the preceding red time (or: intervisit time) $I_g$, is $\lambda_{g,\bullet}\E[I_g] + \O(\lambda_{g,\bullet}^2)$. The probability that more than one vehicle has arrived in any flow of group $g$ is $\O(\lambda_{g,\bullet}^2)$ and therefore negligible in LT analysis. Therefore, we have that
\begin{equation}
\E[G_g] = \sum_{j=1}^{N_g}\rho_{g,j}\E[I_g] + \O(\rho_{g,j}^2) = \sum_{j=1}^{N_g}\rho_{g,j}\E[R] + \O(\rho^2).\label{evLT}
\end{equation}
The mean cycle time is
\begin{equation}
\E[C] = \sum_{g=1}^{M}\E[G_g] + \E[R] = \E[R]\left(1+\rho  \right) + \O(\rho^2).\label{ecLT}
\end{equation}
Now we can find the LT limit of $\E[W_{g,j}]$ by conditioning on the arrival epoch of an arbitrary customer. The probability that an arbitrary vehicle arrives during the all-red phase $R_k$, for $k=1,\dots,M$, is
\[\P(\text{arrival during $R_k$}) = \frac{\E[R_k]}{\E[C]} = \frac{\E[R_k]}{\E[R]}(1-\rho)+\O(\rho^2).\]
The probability that an arbitrary vehicle arrives during the green phase $G_k$, for $k=1,\dots,M$, is
\[\P(\text{arrival during $G_k$}) = \frac{\E[G_k]}{\E[C]} = \frac{\E[G_k]}{\E[R]}(1-\rho)+\O(\rho^2) = \sum_{l=1}^{N_k}\rho_{k,l} +\O(\rho^2).\]
If a vehicle arrives during $G_k$, the vehicle crossing the intersection at that moment is a type $\{k,l\}$ vehicle with probability $\frac{\rho_{k,l}}{\rho_{k,\bullet}}$, for $l=1,\dots,N_k$. This means that we can formulate the mean delay of a type $\{g,j\}$ vehicle as follows.
% Let op hierboven: NIET \lambda_{k,1}/\lambda_{k,\bullet}, want de lengte van B_{k,l} speelt ook een rol. Je hebt een grotere kans om aan te komen tijdens de bediening van een langzame auto, dan tijdens een snelle!!!!
\begin{equation}
\E[W_{g,j}] = \sum_{k=1}^M \left(\frac{\E[R_k]}{\E[R]}(1-\rho)\E[W_{g,j}^{(R_{k})}]+\sum_{l=1}^{N_k} \rho_{k,l}\E[W_{g,j}^{(G_{k,l})}] \right) +\O(\rho^2),\label{EWltPoissonDecomp}
\end{equation}
where $\E[W_{g,j}^{(R_{k})}]$ is the mean delay of a $\{g,j\}$ vehicle that arrives during $R_k$, and $\E[W_{g,j}^{(G_{k,l})}]$ is the mean delay of a $\{g,j\}$ vehicle that arrives during the service of a $\{k,l\}$ vehicle. We will study these conditional mean delays in more detail.

Firstly, we note that the mean delay of a $\{g,j\}$ vehicle arriving while another vehicle of the same type is crossing the intersection, is simply the residual headway of the crossing vehicle, plus the departure headway of the vehicle itself:
\begin{align}
\E[W_{g,j}^{(G_{g,j})}] = \E[B_{g,j}^\textit{res}] + \E[B_{g,j}] +\O(\rho), \qquad g=1,\dots,M; j=1,\dots,N_g.\label{ewLT1}
\end{align}
The $\O(\rho)$ terms are of no interest in \eqref{ewLT1}, because the probability of an arrival during a green period is an $\O(\rho)$ term itself. Secondly, a $\{g,j\}$ vehicle arriving while another vehicle of the same group, but \emph{not} of the same flow, is crossing the intersection experiences no delay at all, because the probability that another vehicle of the same type is present at the intersection is $\O(\rho)$:
\begin{align}
\E[W_{g,j}^{(G_{g,l})}] = 0 +\O(\rho), \qquad g=1,\dots,M; j=1,\dots,N_g; l\neq j.\label{ewLT2}
\end{align}

The mean delay of a $\{g,j\}$ vehicle arriving while a vehicle of \emph{another} group is crossing the intersection, is composed of the mean residual headway of that other vehicle, plus all subsequent all-red times until the vehicle itself can start crossing the intersection, plus its own headway. Note that the intermediate green times are negligible, because their total length is $\O(\rho)$.
\begin{align}
\E[W_{g,j}^{(G_{k,l})}] = \E[B_{k,l}^\textit{res}]+\sum_{i=k}^{g-1}\E[R_i]  +\E[B_{g,j}] +\O(\rho), \qquad g\neq k.\label{ewLT3}
\end{align}
Note that the sum in \eqref{ewLT3} has to be taken cyclic over the all-red periods between the green times of groups~$k$ and~$g$. The mean delay of vehicles arriving during all-red times is slightly more complicated. We have to include all $\O(\rho)$ terms now, because the probability of an arrival during an all-red period is not $\O(\rho)$. The mean delay of a $\{g,j\}$ vehicle arriving during all-red period $R_m$ consists of the residual all-red period $R_m^\textit{res}$, plus the green \emph{and} all-red periods between group $m$ and group $g$. In more detail, the mean delay is composed of:
\begin{compactenum}
\item The departure headways of all vehicles that arrive in groups $m+1,\dots,g-1$ \emph{and} in flow $\{g,j\}$ during the all-red times $R_g, \dots, R_{m-1}$, and the elapsed part of $R_m$ at the arrival epoch, denoted by $R_m^\textit{past}$;\label{firstenum}
\item The residual red time $R_m$, denoted by $R_m^\textit{res}$, plus the headways of all vehicles arriving in groups $m+1,\dots,g-1$ during this residual red time;
\item The all-red times $R_{m+1},\dots,R_{g-1}$ plus the headways of the vehicles that arrive during these all-red times and will be served between $R_m$ and the green period of group $g$;
\item The headway of the arriving vehicle itself.\label{lastenum}
\end{compactenum}
Note that $R_m^\textit{past}$ is the same, in distribution, as $R_m^\textit{res}$. This leads to the following expression for $\E[W_{g,j}^{(R_m)}]$, where the terms $\eqref{ewLT4firsteqn}-\eqref{ewLT4}$ correspond to items $\ref{firstenum}-\ref{lastenum}$:
\begin{align}
\E[W_{g,j}^{(R_m)}] &= \left(\sum_{k=g-M}^{m-1}\E[R_k]+E[R_m^{\textit{past}}]\right)\left(\sum_{l=m+1}^{g-1}\rho_{l,\bullet}+\rho_{g,j}\right)\label{ewLT4firsteqn}\\
&+ E[R_m^{\textit{res}}]\left(1 + \sum_{l=m+1}^{g-1}\rho_{l,\bullet}\right)\\
&+ \sum_{k=m+1}^{g-1}\E[R_k] \Big(1+\sum_{l=m+1}^{g-1}\rho_{l,\bullet}\Big)\\
&+ \E[B_{g,j}].\label{ewLT4}
\end{align}
Combining Equations \eqref{evLT}--\eqref{ewLT4} completes the proof of Theorem \ref{EWltPoissonTheorem}.
\end{proof}

\begin{remark}
Equation \eqref{ewLT2} will turn out to be the only place in the LT analysis where the assumption that an empty flow stays empty during the remainder of the green period plays a role. Without this assumption, we would have that $\E[W_{g,j}^{(G_{g,l})}] = \E[B_{g,j}] +\O(\rho)$ for $l\neq j$. The HT analysis in the previous section would not change at all. This means that the assumption, perhaps surprisingly, does not have much impact at all on the real and the approximated mean delays. In Section \ref{numericalresults}, Example 3, we discuss in more detail the impact of the assumption that vehicles arriving at an empty flow during a green phase do not experience any delay.
\end{remark}

The main result of this section is an adaptation of \eqref{EWltPoisson}, which can be used as an approximation for the LT limit of the mean delay for general renewal arrivals.
\begin{corollary}\label{lttheorem}
An approximation for the LT limit of the mean delay for vehicles in flow $\{g,j\}$ for a traffic intersection with general renewal arrivals and deterministic all-red times is:
\begin{align}
\E[W_{g,j}^{\textit{LT}}] \approx\,& \rho_{g,j} \left(\E[\hat{A}_{g,j}]\hat{g}_{g,j}(0)-1\right) \E[B_{g,j}^\textit{res}])+\rho\E[B^\textit{res}]+\E[B_{g,j}]-\sum_{k\neq j} \rho_{g,k}\left(\E[B^\textit{res}_{g,k}]+\E[B_{g,j}]\right)\nonumber\\
+\,& \frac12(1+\rho+\rho_{g,j}-2\rho_{g,\bullet})R.\label{EWltDeterministic}
\end{align}
\end{corollary}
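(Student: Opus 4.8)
The plan is to obtain \eqref{EWltDeterministic} in two steps: first specialise the \emph{exact} Poisson formula \eqref{EWltPoisson} of Theorem~\ref{EWltPoissonTheorem} to deterministic all-red times and re-organise it, and then apply the light-traffic renewal correction of \cite{boonapprox2009}.

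\emph{Step 1 (Poisson, deterministic all-red times).} Substitute $\E[R_m^\textit{res}]=\E[R_m^\textit{past}]=\E[R_m]/2$ in \eqref{EWltPoisson} and regroup the terms by their physical origin in the Mean Value Analysis behind Theorem~\ref{EWltPoissonTheorem}. The ``residual headway of the vehicle currently crossing the stop line'' contributions add up, over \emph{all} flows, to $\sum_{k,l}\rho_{k,l}\E[B^\textit{res}_{k,l}]=\rho\E[B^\textit{res}]$ (using $\E[B]=\rho/\sum_i\lambda_i$), except that the flows $\{g,l\}$ with $l\neq j$ of the tagged vehicle's own group contribute nothing by the empty-flow assumption, so this block equals $\rho\E[B^\textit{res}]-\sum_{k\neq j}\rho_{g,k}\E[B^\textit{res}_{g,k}]$; the ``own headway'' contributions add up to $\E[B_{g,j}]\bigl(1-\sum_{k\neq j}\rho_{g,k}\bigr)$, since the tagged vehicle always incurs $\E[B_{g,j}]$ unless (with probability $\sum_{k\neq j}\rho_{g,k}+\O(\rho^2)$) it arrives during its own group's green phase while another flow is being served; and the remaining terms --- residual red times, intermediate all-red times, and the headways of the ``queue-jumpers'' ahead of the tagged vehicle --- should collapse to $\tfrac12(1+\rho+\rho_{g,j}-2\rho_{g,\bullet})R$. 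For this last block the $\O(1)$ part reduces to $\tfrac1{\E[R]}\sum_m\bigl(\tfrac12\E[R_m]^2+\E[R_m]\sum_{k>m}\E[R_k]\bigr)$, which equals $\E[R]/2=R/2$ by the elementary identity $\tfrac12\sum_m\E[R_m]^2+\sum_{m<k}\E[R_m]\E[R_k]=\tfrac12\bigl(\sum_m\E[R_m]\bigr)^2$; the $\O(\rho)$ remainder --- coming from the $1-\rho$ cycle-length normalisation, from the $\rho_{l,\bullet}$- and $\rho_{g,j}$-weighted queue-jumper headways in \eqref{ewLT4firsteqn}--\eqref{ewLT4}, and from the $\sum_{i=k}^{g-1}\E[R_i]$ picked up by a vehicle arriving during some $G_k$ with $k\neq g$ --- must then be shown, by further cyclic-index manipulation, to assemble into $\tfrac12(\rho+\rho_{g,j}-2\rho_{g,\bullet})R$. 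Keeping track of these nested cyclic sums of the individual $\E[R_m]$ is the main computational hurdle.

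\emph{Step 2 (renewal adaptation).} The only place where the Poisson assumption is genuinely used in the proof of Theorem~\ref{EWltPoissonTheorem} is \eqref{ewLT1}: a $\{g,j\}$ vehicle arriving while another $\{g,j\}$ vehicle is crossing sees precisely the equilibrium residual headway $\E[B^\textit{res}_{g,j}]$. For a general renewal arrival process this fails, because the tagged vehicle's arrival epoch is correlated with the presence of a same-flow vehicle; following the light-traffic analysis of \cite{boonapprox2009}, the heuristic fix is to multiply this $\E[B^\textit{res}_{g,j}]$ by the factor $\E[\hat{A}_{g,j}]\hat{g}_{g,j}(0)$, where $\hat{g}_{g,j}$ is the unscaled interarrival density, a factor that equals $1$ for Poisson arrivals because then $\hat{g}_{g,j}(0)=\hat\lambda_{g,j}=1/\E[\hat{A}_{g,j}]$. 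Writing $\E[\hat{A}_{g,j}]\hat{g}_{g,j}(0)\E[B^\textit{res}_{g,j}]=\E[B^\textit{res}_{g,j}]+\bigl(\E[\hat{A}_{g,j}]\hat{g}_{g,j}(0)-1\bigr)\E[B^\textit{res}_{g,j}]$, the first piece stays in the residual-headway block from Step~1 and the second becomes the leading term $\rho_{g,j}\bigl(\E[\hat{A}_{g,j}]\hat{g}_{g,j}(0)-1\bigr)\E[B^\textit{res}_{g,j}]$ of \eqref{EWltDeterministic}, all other terms being inherited verbatim. The true obstacle here is not computational but conceptual: Step~2 is a heuristic and not a rigorous argument --- which is precisely why \eqref{EWltDeterministic} is stated as an approximation --- and its justification rests on the analysis of \cite{boonapprox2009} rather than on anything established in the present paper.
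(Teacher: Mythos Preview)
Your proposal is essentially the paper's own derivation, carried out in reversed order: you specialise to deterministic all-red times first and then apply the renewal correction, whereas the paper first applies the correction to obtain \eqref{EWlt}, then rewrites into the compact form \eqref{EWltnotdeterministic}, and only at the end sets $\E[R^\textit{res}]=R/2$ and $\V[R_g]=0$. Since the correction touches only a single term, the order is immaterial and your Step~1 regrouping matches the paper's compact form term for term.

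One point worth sharpening in Step~2: your claim that ``the only place where the Poisson assumption is genuinely used is \eqref{ewLT1}'' is not strictly true --- PASTA is invoked repeatedly in the proof of Theorem~\ref{EWltPoissonTheorem}, e.g.\ to equate the probability of arriving during $R_k$ with $\E[R_k]/\E[C]$. The paper's justification is instead via the (approximative) Fuhrmann--Cooper decomposition $W_{g,j}\equaldist W_{g,j,M/G/1}+I_{g,j}^\textit{res}$: only the $M/G/1$ piece is sensitive to the arrival process, and its LT limit $\rho_{g,j}\E[B^\textit{res}_{g,j}]$ is replaced by Whitt's $GI/G/1$ LT limit \eqref{EWwhitt}. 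This yields exactly the same correction term you obtain, but gives a cleaner rationale for why the rest of \eqref{EWltPoisson} is left untouched.
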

%\begin{proof}
We derive approximation \eqref{EWltDeterministic} by adapting \eqref{EWltPoisson} to create an LT approximation for the case of general renewal arrivals. This adaptation is similar as in \cite{boonapprox2009}, based on the observation that the first term in \eqref{EWltPoisson}, $\rho_{g,j} \E[B_{g,j}^\textit{res}]$, is the LT limit of the mean waiting time (excluding the service time) in an $M/G/1$ queue in isolation:
\begin{equation}
\E[W_{M/G/1}] = \rho \E[B^\textit{res}]+\O(\rho^2).\label{EWwhittPoisson}
\end{equation}
We obtain an approximation for the mean waiting time for general renewal arrivals by replacing this term with the LT limit for the $GI/G/1$ queue from Whitt~\cite{whitt89}:
\begin{equation}
\lim_{\rho\downarrow0} \frac{\E[W_{GI/G/1}]}{\rho}= \frac{1+\C_{B}^2}{2}\E[\hat{A}]\hat{g}(0)\E[B],\label{EWwhitt}
\end{equation}
where $\C_{B}^2$ is the Squared Coefficient of Variation (SCV) of the service times, and $\hat{g}(t)$ is the density of the interarrival times $\hat{A}$ at $\rho=1$.
Our approximation is based on the approximative assumption that the Fuhrmann-Cooper decomposition (cf. \cite{fuhrmanncooper85}) holds for our system with general renewal arrivals. The Fuhrmann-Cooper decomposition for the waiting times of customers in queue~$i$ of a polling system with Poisson arrivals and exhaustive service in queue $i$ states that the waiting time of an arbitrary type $i$ customer is the sum of two independent random variables. One of these random variables is the waiting time of a customer in the corresponding $M/G/1$ queue in isolation (as if it would not have been part of a polling system), and the other random variable is the residual intervisit time of queue $i$:
\[
W_i \equaldist W_{i,M/G/1} + I_i^\textit{res}. %\label{fuhrmanncooper}
\]
If we combine this Fuhrmann-Cooper decomposition with \eqref{EWltPoisson}, \eqref{EWwhittPoisson}, and \eqref{EWwhitt}, we obtain the following approximation for the mean delay (i.e., waiting time \emph{plus} headway)  of a vehicle in flow~$\{g,j\}$ for an intersection with general renewal arrivals:
\begin{align}
\E[W_{g,j}^{\textit{LT}}] \approx\,& \rho_{g,j} \left(\E[\hat{A}_{g,j}]\hat{g}_{g,j}(0) \E[B_{g,j}^\textit{res}] +\E[B_{g,j}]\right)
+ \sum_{m=g-M+1}^{g-1} \sum_{k=1}^{N_m} \rho_{m,k}\left(\E[B_{m,k}^\textit{res}] + \sum_{l=m}^{g-1}\E[R_l]  +\E[B_{g,j}]\right)\nonumber\\
+\,& \sum_{m=g-M}^{g-1} \frac{\E[R_m]}{\E[R]}\left(E[R_m^{\textit{res}}]\Big(1-\rho+2\sum_{k=m+1}^{g-1}\rho_{k,\bullet} + \rho_{g,j} \Big) +\right.\nonumber\\
\,& \hspace{1cm}\left.
\sum_{k=g-M}^{m-1}\E[R_k] \Big(\sum_{l=m+1}^{g-1}\rho_{l,\bullet}+\rho_{g,j}\Big)
+\sum_{k=m+1}^{g-1}\E[R_k] \Big(1-\rho+\sum_{l=m+1}^{g-1}\rho_{l,\bullet}\Big)+(1-\rho)\E[B_{g,j}]\right).
\label{EWlt}
\end{align}
This expression can be written into a more compact form. After some straightforward (but tedious) rewriting, it can be shown that \eqref{EWlt} reduces to:
\begin{align}
\E[W_{g,j}^{\textit{LT}}] \approx\,& \rho_{g,j} \left(\E[\hat{A}_{g,j}]\hat{g}_{g,j}(0)-1\right) \E[B_{g,j}^\textit{res}]+\rho \E[B^\textit{res}]+\E[B_{g,j}]-\sum_{k\neq j} \rho_{g,k}\left(\E[B^\textit{res}_{g,k}]+\E[B_{g,j}]\right)\nonumber\\
+\,& (1-\rho+\rho_{g,j})\E[R^\textit{res}]+(\rho -\rho_{g,\bullet})\E[R]+\frac{1}{\E[R]}\sum_{m=g-M}^{g-1}\left(\sum_{k=m+1}^{g-1}\rho_{k,\bullet}\right)\V[R_m].
\label{EWltnotdeterministic}
\end{align}
For deterministic all-red times, we have $\E[R^\textit{res}]=R/2$ and $\V[R_g]=0$ for $g=1,\dots,M$. Carrying out these substitutions in \eqref{EWltnotdeterministic} leads to expression \eqref{EWltDeterministic}.
%\end{proof}

\begin{remark}
A simplification can be made to replace the expression $\E[\hat{A}_{g,j}]\hat{g}_{g,j}(0)$. In \cite{boonapprox2009,whitt89} it is shown that a good approximation for this term is
\[
\E[\hat{A}_i]\hat{g}_i(0) \approx \begin{cases}
2\frac{\C_{A_i}^2}{\C_{A_i}^2+1} &\qquad\text{ if } \C_{A_i}^2>1,\\
\left(\C_{A_i}^2\right)^4 &\qquad\text{ if } \C_{A_i}^2\leq1,
\end{cases}
\]
where $\C_{A_i}^2$ is the squared coefficient of variation of $A_i$ (and, hence, also of $\hat{A}_i$). Note that this simplification results in an approximation that requires only the first two moments of each input variable (i.e., headways, all-red times, and interarrival times).
\end{remark}

\begin{remark}[Convergence to the LT limit]
In this paragraph we study the convergence of the mean delays to their LT limiting values, similarly to what we have done for the HT limit in the previous section. Although the LT limit of the mean delay \eqref{EWlt} has been obtained in a rather heuristical way, it turns out to be very accurate. In contrast to the HT limit, expression \eqref{EWlt} is not exact when the arrival processes are not Poisson, because the correction term $\rho_{g,j} \left(\E[\hat{A}_{g,j}]\hat{g}_{g,j}(0)-1\right) \E[B_{g,j}^\textit{res}])$ is based on a decomposition of the mean delay (see \cite{boonapprox2009} for more details) which is known not be true in general. Nevertheless, when comparing the approximated delays with simulated delays, results show that the approximation is very accurate (see Section \ref{numericalresults}). We have also been able to test the accuracy of the LT limit for small intersections (4 flows) by comparing it to \emph{exact} results under the assumption that all the departure headways, interarrival times and switch-over times are exponentially distributed. This has provided much insight in the behaviour of the delay as a function of the load $\rho$. Expression \eqref{EWlt} captures the LT behaviour very well, except for some cases where the behaviour exposes more curvature. To illustrate this, we consider a simple intersection with four flows, divided into two groups, each having a saturation flow rate of 1800 vehicles per hour. The mean all-red times are 6 seconds each. The relative loads are $\hat\rho_1=\hat\rho_3=0.1$, and $\hat\rho_2=\hat\rho_4=0.4$. We assume exponentially distributed headways, interarrival times and all-red times, because this enables us to model this system as a Markov chain. In order to solve this system numerically, we need a finite state space, so we take a maximum queue length of 6 vehicles per flow. This is sufficient since we are only focussing on LT behaviour. Figure \ref{accuracyLTpictures} shows the mean delays for flows 1 and 2 (and, because of symmetry, also of flows 3 and 4) as a function of $\rho$, the total load in the system. These pictures illustrate that the derivative at $\rho=0$ of the mean delay indeed equals the LT expression. But Figure \ref{accuracyLTpictures}(a) also illustrates that the actual behaviour may be non-linear, which can lead to deviations between the approximation, developed in the next section, and the actual mean delay.
\begin{figure}[h!]
\parbox{0.35\textwidth}{%
\begin{center}
\includegraphics[width=\linewidth]{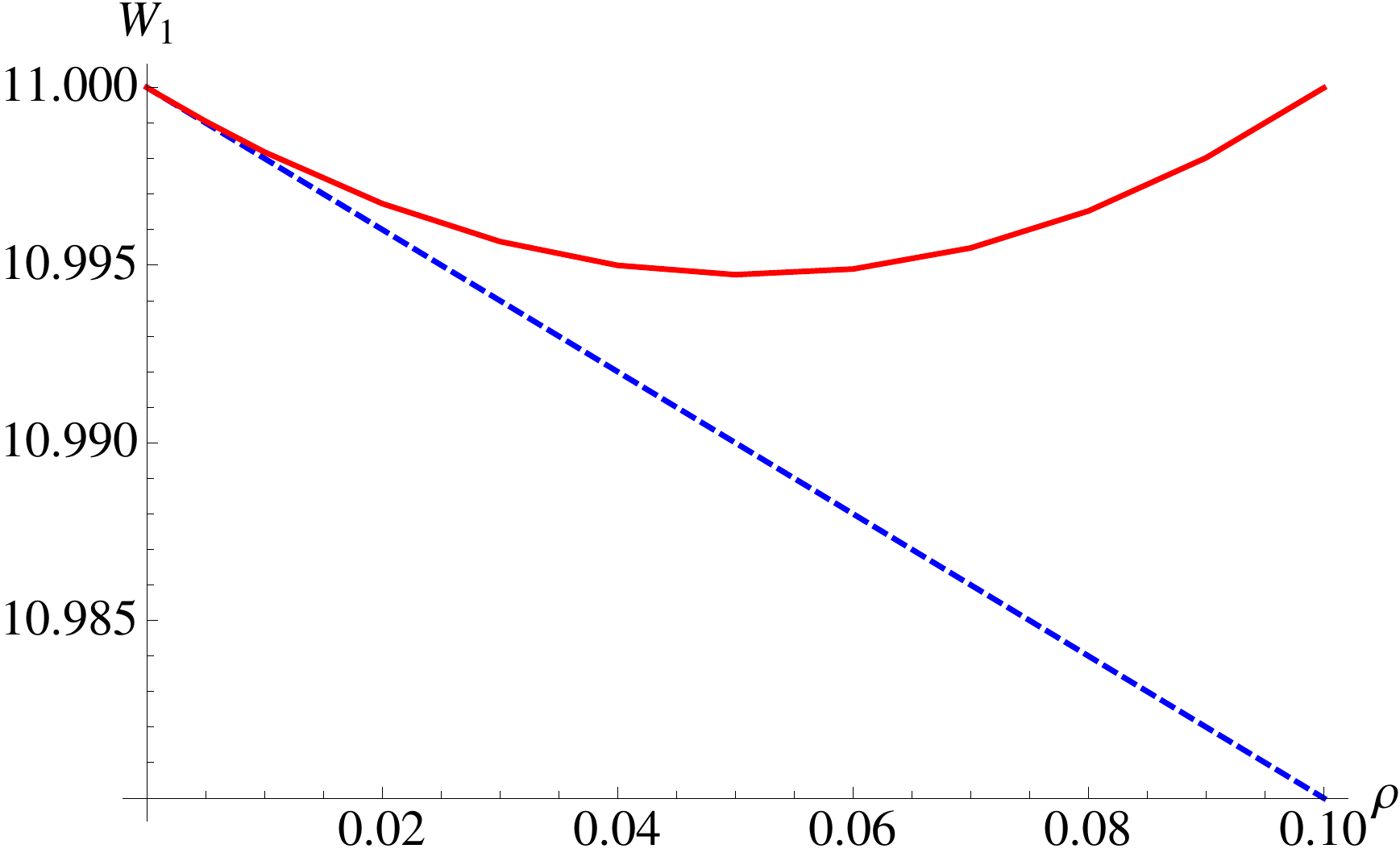}\\
(a)
\end{center}
}
\hfill
\includegraphics[width=0.2\linewidth]{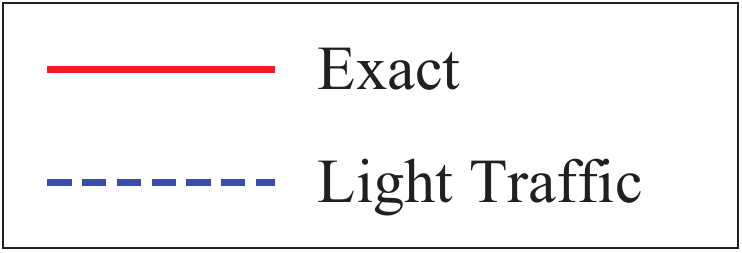}
\hfill
\parbox{0.35\textwidth}{%
\begin{center}
\includegraphics[width=\linewidth]{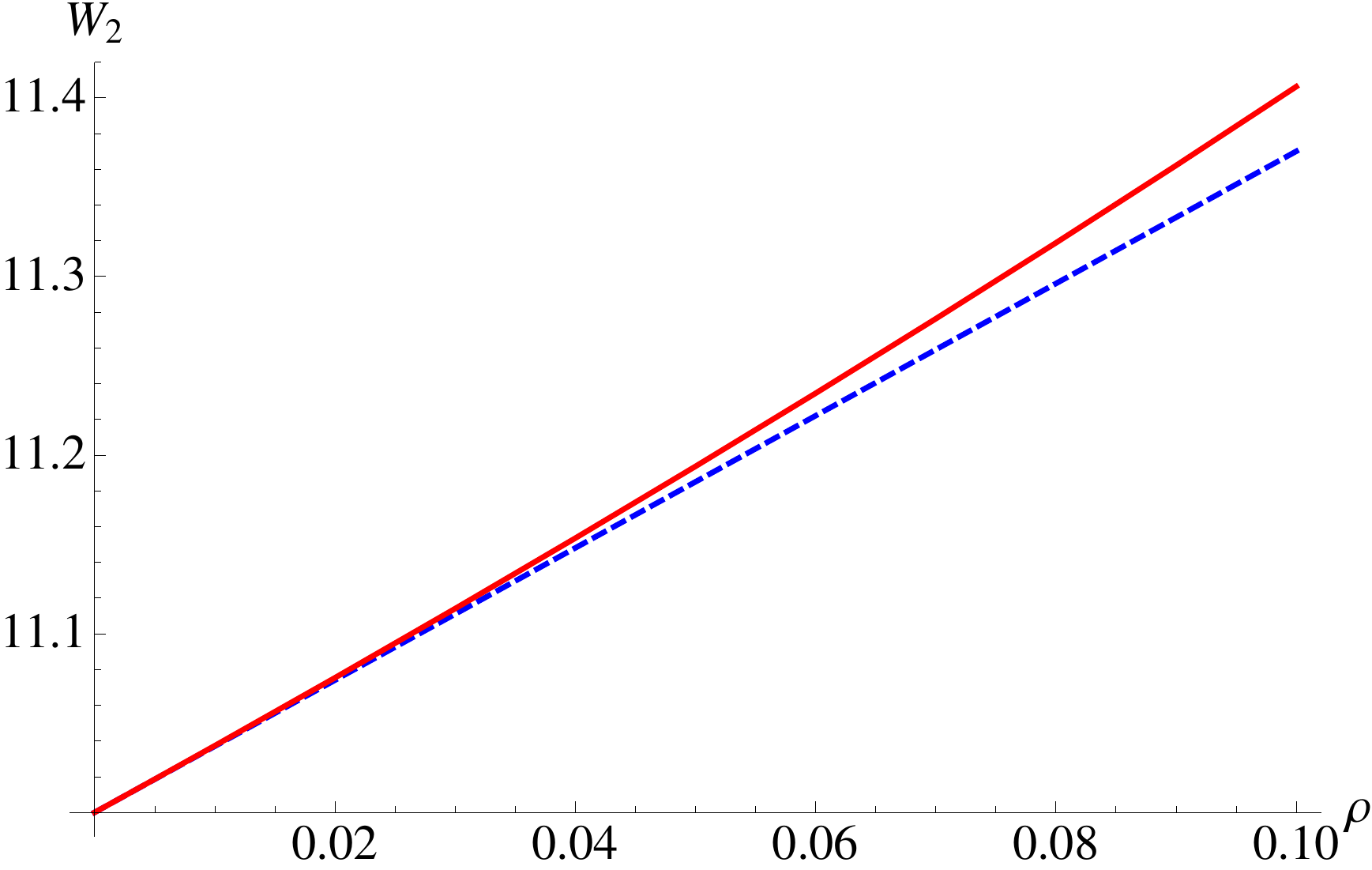}\\
(b)
\end{center}
}
\caption{The exact mean delays of flow 1 (left) and flow 2 (right) and the LT limits of the example in Section \ref{LTsection}.}
\label{accuracyLTpictures}
\end{figure}
\end{remark}

\section{Interpolations}\label{interpolationssection}

In the previous two sections we have given expressions for the mean delay in the extreme cases where the system is either hardly exposed to traffic, or completely saturated. In the present section we use these results to develop interpolations between these two cases that give an approximation for the mean delay for any load, as long as the system is not oversaturated (cf.~\cite{boonapprox2009}). In fact, we develop two different interpolations. At the end of this section we discuss the conditions under which each interpolation should be preferred.
\begin{align}
\E[W_{g,j}^\textit{app1}]&=\frac{K'_{0,g,j}+K'_{1,g,j} \rho}{1-L\rho},&\qquad g=1,\dots,M; j=1,\dots,N_g.\label{ewapprox1}\\
\E[W_{g,j}^\textit{app2}]&=\frac{K_{0,g,j}+K_{1,g,j} \rho + K_{2,g,j} \rho^2}{1-L\rho},&\qquad g=1,\dots,M; j=1,\dots,N_g.\label{ewapprox2}
\end{align}
De denominator $1-L\rho$ is required to capture the HT behaviour of the system, as discussed in Section~\ref{HTsection}. The numerator is a polynomial of first or second degree. The constants $K'_{0,g,j}$ and $K_{0,g,j}$ (which will turn out to be the same) follow from the requirement that \eqref{ewapprox1} and \eqref{ewapprox2} should result in the same mean delay for $\rho=0$ as the LT limit~\eqref{EWlt}.
We find $K'_{1,g,j}$ and $K_{2,g,j}$ by imposing that the HT limit of the approximation is equal to the HT limit of the exact mean delay.
Finally, the constant $K_{1,g,j}$ is found by adding the requirement that also the derivative with respect to $\rho$, taken at $\rho=0$, of our approximation is equal to the derivative of the LT limit.
A more formal definition of these requirements is presented below:

\begin{align*}
\E[W_{g,j}^\textit{app1}]\big|_{\rho=0} &= \E[W_{g,j}^\textit{LT}]\big|_{\rho=0},\\
\lim_{\rho\uparrow1/L}(1-L\rho)\E[W_{g,j}^\textit{app1}] &= \lim_{\rho\rightarrow1/L}(1-L\rho)\E[W_{g,j}],
\end{align*}%
and for approximation \eqref{ewapprox2},
\begin{align*}
\E[W_{g,j}^\textit{app2}]\big|_{\rho=0} &= \E[W_{g,j}^\textit{LT}]\big|_{\rho=0},\\
\frac{\dd}{\dd\rho}\E[W_{g,j}^\textit{app2}]\big|_{\rho=0} &= \frac{\dd}{\dd\rho}\E[W_{g,j}^\textit{LT}]\big|_{\rho=0},\\
\lim_{\rho\uparrow1/L}(1-L\rho)\E[W_{g,j}^\textit{app2}] &= \lim_{\rho\rightarrow1/L}(1-L\rho)\E[W_{g,j}],
\end{align*}
%}}
using \eqref{EWltDeterministic} for $\E[W_{g,j}^\textit{LT}]$ and \eqref{EWht} for $\lim_{\rho\rightarrow1/L}(1-L\rho)\E[W_{g,j}]$. This leads to the following constants:

%\fbox{\parbox{\textwidth}{%
\begin{align}
K'_{0,g,j} &= \frac{R}{2}+\E[B_{g,j}],    \label{Kacc0}\\
K'_{1,g,j} &= L\left(\frac{\left(1-\hat\rho_{g,1}/L\right)^2}{1-\hat\rho_{g,j}/L}  \left(\frac{R}{2} + \frac{\sigma^2}{\delta}  \right) - K'_{0,g,j}\right),\label{Kacc1}
\end{align}
%}}
%\fbox{\parbox{\textwidth}{%
\begin{align}
K_{0,g,j} &= \frac{R}{2}+\E[B_{g,j}],    \label{K0}\\
K_{1,g,j} &= \hat\rho_{g,j} \left(\E[\hat{A}_{g,j}]\hat{g}_{g,j}(0)-1\right) \E[B_{g,j}^\textit{res}]+\E[B^\textit{res}]-L\,\E[B_{g,j}]-\sum_{k\neq j} \hat\rho_{g,k}\left(\E[B^\textit{res}_{g,k}]+\E[B_{g,j}]\right)\nonumber\\
&+ \frac12(1-L+\hat\rho_{g,j}-2\hat\rho_{g,\bullet})R,     \label{K1}\\
K_{2,g,j} &= L^2\left(\frac{\left(1-\hat\rho_{g,1}/L\right)^2}{1-\hat\rho_{g,j}/L}  \left(\frac{R}{2} + \frac{\sigma^2}{\delta}  \right) - K_{0,g,j}\right) - L\,K_{1,g,j}, \label{K2}
\end{align}
%}}
where $\delta=\sum_{g=1}^M \frac{\hat\rho_{g,1}}{L}(1-\frac{\hat\rho_{g,1}}{L})/2$, and $\sigma^2=\sum_{g=1}^M \hat\lambda_{g,1}\left(\V[B_{g,1}]+\hat\rho_{g,1}^2\V[\hat A_{g,1}]\right)$.
In the above expressions, we have assumed that the all-red times are deterministic, as is usually the case. Sections~\ref{HTsection} and~\ref{LTsection} give insight in how to adapt these constants if randomness is involved in (some of) the all-red times.

An obvious question that arises now, is which of the two approximations, \eqref{ewapprox1} or \eqref{ewapprox2}, should be preferred.
We can give some (heuristic) arguments, obtained by studying many numerical examples.
It can be shown that the two approximations, \eqref{ewapprox1} or \eqref{ewapprox2}, are both exact for the limiting cases $\rho\rightarrow 0$, ${\rho\rightarrow1/L}$ and $\E[R]\rightarrow\infty$. The difference between the two interpolation functions, respectively having a first and second order polynomial in the numerator, is the additional requirement that the derivative  in $\rho=0$ of the approximated mean waiting time should be equal to the derivative of the LT expression \eqref{EWlt}. Since the LT expression is very accurate, using this additional information generally leads to more accurate approximations. For this reason, \eqref{ewapprox2} should be preferred in most practical cases. However, there are some circumstances under which \eqref{ewapprox1} should be preferred, despite resulting in a derivative at $\rho=0$ which is not exact. In more detail, if the actual delay displays a strong non-linear behaviour for small loads, using the second-order interpolation leads to bigger inaccuracies for loads around $L\rho=0.7$. In most cases, this happens if the derivative in $\rho=0$ is very small, or even negative (see also Figure \ref{LTsection}(a)). Studying the LT expression \eqref{EWlt} shows that the most natural way for a negative derivative to appear, is when the combined load in all other groups is smaller than the load of the other flows within the group. In terms of the model parameters: if, for a certain flow $\{g,i\}$, the criterion
\begin{equation}
\sum_{m\neq g}\hat\rho_{m,\bullet} - \sum_{j\neq i}\hat\rho_{g,j}<0,\label{firstordercriterion}
\end{equation}
is satisfied, the first-order interpolation \eqref{ewapprox1} is preferred over the second-order interpolation \eqref{ewapprox2}. Note that this is just a rule of thumb. We show the effect of choosing the wrong interpolation at the end of Example 2 in the next section.

\begin{remark}
A negative slope at $\rho=0$ is possible because of the assumption that the delay of vehicles approaching an empty flow during a green period experience no delay at all. Under some circumstances, mostly when Condition \ref{firstordercriterion} is satisfied, an increase in traffic may be beneficial for certain flows that hardly receive any traffic at all. The increase in traffic results in an increase of the mean green period, which results in a larger number of vehicles that benefit from the green light while their flow is empty. This might have a positive effect on the mean delay for vehicles in this flow, although the effect disappears when the load increases further.
\end{remark}

\section{Numerical results}\label{numericalresults}

In the present section, we study typical features of the approximations and assess their accuracies. In Example~1 we do this by taking an imaginary intersection and simulating several scenarios. For each scenario we compare the simulated delays to the approximated delays. In Example~2 we take three real, existing intersections.

\subsection*{Example 1: Accuracy of the approximations}

In this example, we analyse an (imaginary) intersection with 6 traffic flows. The ratios of the arrival rates of the six flows are $1:2:3:4:5:6$. The discharge rates are all equal, 1800 vehicles per hour, hence $\E[B_i]=2$ seconds.
For now, we assume that the SCV of the departure headways is 1. In our simulation we have used exponentially distributed random variables to achieve this. We will have a short discussion at the end of this example about other probability distributions. Note that $\hat\rho_i=\frac{i}{21}$, because the mean headways are all equal. The total all-red time $R$ in a cycle is 12 seconds, divided equally among the individual all-red times. We compare several different scenarios to get insight into the accuracy of the approximations. The first seven scenarios, shown in Table \ref{scenariosExample1}, differ in the choice of which flows to combine in one group.
For each scenario, the mean delays for all queues have been obtained by simulation, and are being compared to the approximated mean delays \eqref{ewapprox1}  or \eqref{ewapprox2}, using Criterion \eqref{firstordercriterion} to decide which of the two interpolations is used. The mean delay of each queue, in each scenario, is obtained for 11 different loads: $L\rho = \{0.001, 0.1, 0.2, \dots, 0.8, 0.9, 0.99\}$. Note that it is convenient to use $L\rho$ instead of $\rho$, because $L\rho$ takes values between 0 and 1. Furthermore, $L\rho$ can be interpreted as the level of saturation of the intersection.
By comparing all approximated mean delays to the corresponding simulated values, the relative errors are calculated:
\[
e_i(L\rho)=\left|\frac{W_i^\textit{app}-W_i^\textit{sim}}{W_i^\textit{sim}}\right|\,\times 100\%, \qquad i=1,\dots,6.
\]
In order to compare the scenarios, we introduce two quality measures. The first quality measure, \textit{QM1}, which gives insight in the \emph{worst} performance, is the \emph{largest} relative error (and the corresponding queue, and the load at which this error is obtained).
The second quality measure, \textit{QM2}, which provides a better insight in the \emph{overall} performance, is the weighted mean relative error. This is computed by averaging the 11 relative errors for each flow, $\overline{e}_i$, and subsequently taking the weighted average proportional to the arrival rates. In more detail:
\begin{align*}
\textit{QM1} &= \max_{i,L\rho}e_i(L\rho),\\
\textit{QM2} &= \sum_{i=1}^6 \frac{\hat\lambda_i}{\sum_{j=1}^6\hat\lambda_j}\overline{e}_j.\\
\end{align*}
Table \ref{scenariosExample1} displays these two quality measures for scenarios \I--\VII.
\begin{table}[h!]
\begin{center}
\begin{tabular}{|c|c|c|c|c|c|c|c|c|}
\hline
Scenario & Groups & $\C_{A_i}^2$ & $\C_B^2$ & Interpolation & \multicolumn{3}{|c|}{\textit{QM1}} & \textit{QM2}\\
\cline{6-8}
         &        &        &            & orders& error & flow & $L\rho$ & \\
\hline
\I     & $\{1\}, \{2\}, \{3\}, \{4\}, \{5\}, \{6\}$ & 1 & 1 & 2 2 2 2 2 2 & 0.3\% & 1 & 0.7 & 0.06\%\\
\hline
\II    & $\{1,2\},\{3,4\},\{5,6\}$ & 1 & 1 & 2 2 2 2 2 2 & 21.9\% & 6 & 0.9  & 8.17\% \\
\hline
\III   & $\{1,4\},\{2,5\},\{3,6\}$ & 1 & 1 & 2 2 2 2 2 2 & 4.4\%  &6  & 0.7  & 1.29\%\\
\hline
\IV    & $\{1,6\},\{2,5\},\{3,4\}$ & 1 & 1 & 2 2 2 2 2 2 & 10.3\% & 5 & 0.9 & 3.29\%\\
\hline
\VV     & $\{1,2,3\},\{4,5,6\}$    & 1 & 1 & 2 2 2 1 1 1 & 12.3\% & 6 & 0.9 & 4.14\%\\
\hline
\VI    & $\{1,2,5\},\{3,4,6\}$    & 1 & 1 & 2 2 1 1 2 2 & 11.8\% & 6 & 0.7 & 3.79\%\\
\hline
\VII   & $\{1,3,5\},\{2,4,6\}$    & 1 & 1 & 2 1 2 2 2 2 & 9.5\% & 6 & 0.7 & 3.22\% \\
\hline
\end{tabular}
\end{center}
\caption{Scenarios \I--\VII\ used in Example 1, and the corresponding quality measures \textit{QM1} and \textit{QM2}.}
\label{scenariosExample1}
\end{table}

We discuss the scenarios \I--\VII\ first, later we add some extra scenarios. Scenario \I\ divides the six signals into six separate groups. This reduces the model to an ordinary polling model with exhaustive service and Poisson arrivals. Exact results are known for this model, and the approximation developed in the present paper simplifies to the expression found in \cite{boonapprox2009}. In \cite{boonapprox2009}, this approximation is discovered to perform very well, especially for systems with more than two queues. Therefore, it is no surprise that scenario \I\ results in the best approximation accuracy: an average mean error of only 0.06\%, and the worst relative error, 0.3\%, is obtained for flow 1 (with the smallest load) at $L\rho = 0.7$. The next scenario, Scenario \II, has the lowest accuracy, illustrating the greatest drawbacks of the approximation. The six flows are divided into three groups. The reason why this scenario performs so poorly, is that the flows in each group have rather similar loads. When the HT limit is reached, the system behaviour is such that the heaviest loaded flows dominate the groups and determine the lengths of the green periods. As a result, the approximation converges to the HT limit only very slowly. For $L\rho=0.9$ the relative errors $e_i$ are approximately 21\% for \emph{all} flows, whereas the errors are back to 7\% for $L\rho=0.99$. The overall mean relative error of 8.17\% is reasonable, but in the range $0.8 \leq L\rho \leq 0.97$ all relative errors are greater than 15\%. We will have some further discussion on this issue later in this paper. In Scenario \III, the flows are also divided into three groups, but now the best possible choice (for our approximation) is made, avoiding groups with two flows having similar loads. Therefore, the results are excellent now, with $\textit{QM1}=4.4\%$ and $\textit{QM2}=1.29\%$. Scenario \IV\ might be the most interesting from a practical point of view, combining good and bad combinations of flows in the groups. As expected, the performance is better than Scenario \II, and worse than Scenario \III, with $\textit{QM1}=10.3\%$ and $\textit{QM2}=3.29\%$. Because there is one group (with flows 3 and 4) having loads relatively close to each other, the relative errors are relatively large for $L\rho=0.9$, ranging from $7.5\%-10\%$, but since the other groups have better combinations of flows, the results are much better than for Scenario \II. We will return to Scenario~\IV\ later in this section, when we vary several other parameters of the model.

\begin{figure}[h!]
\begin{center}
\mbox{}\hfill
\includegraphics[width=0.45\linewidth]{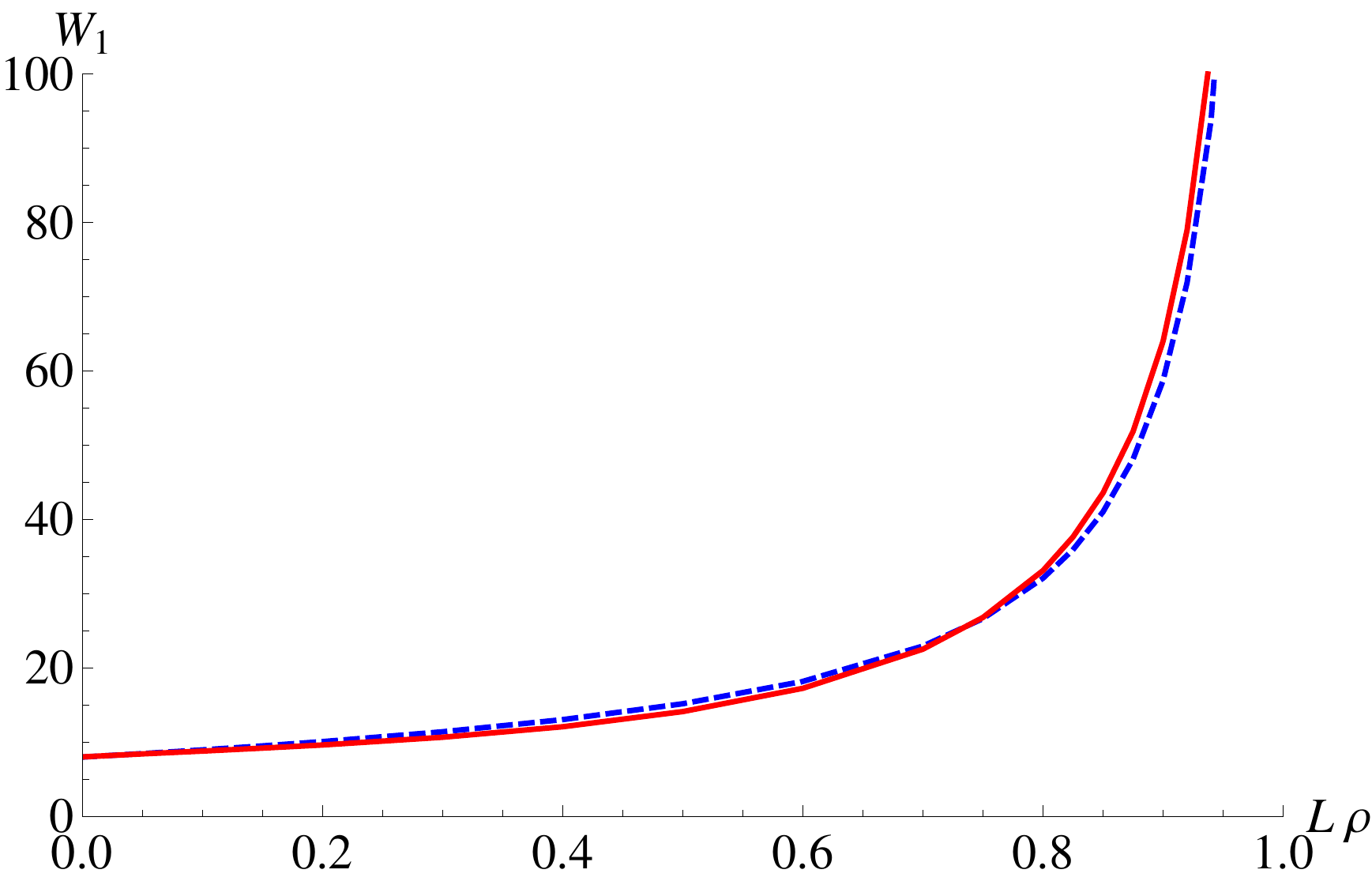}
\hfill
\includegraphics[width=0.45\linewidth]{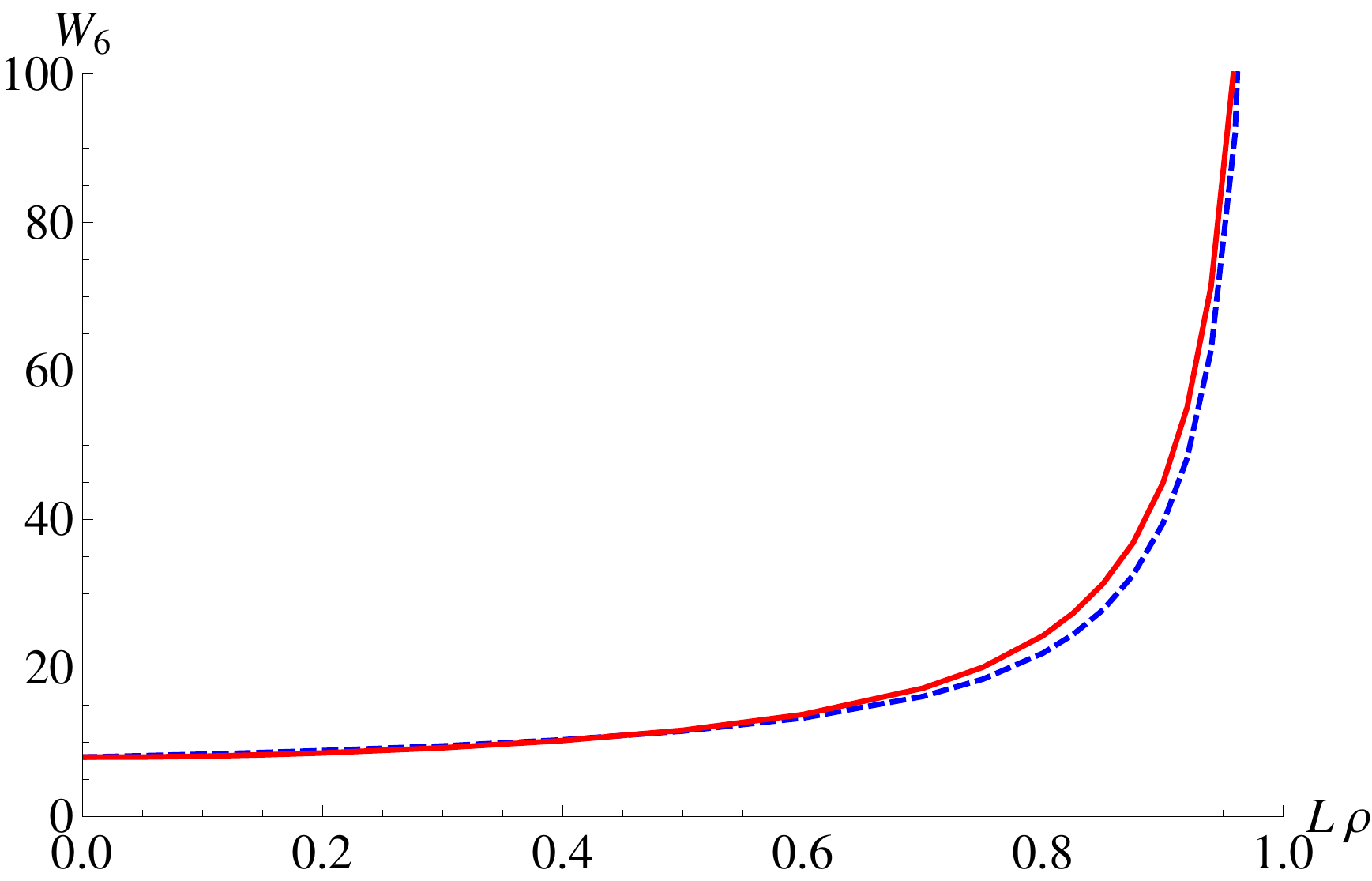}
\hfill
\mbox{}\\
\mbox{}\hfill
\includegraphics[width=0.2\linewidth]{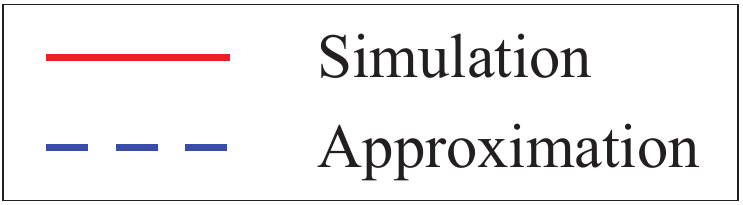}
\hfill\mbox{}
\caption{Simulated and approximated mean delays for flow 1 (left) and flow 6 (right) in Scenario V.}
\label{scenario5}
\end{center}
\end{figure}

Now, we discuss Scenarios \VV-\VII\ briefly. These scenarios have only two groups, each containing three flows. Scenario \VV\ can be compared to Scenario \II, having the worst possible division into groups. An interesting aspect of this scenario, is that the mean delays of all flows in group 1 are approximated using the second-order approximation \eqref{ewapprox2}, whereas the mean delays in group 2 use the first-order approximation \eqref{ewapprox1}. The reason to do this, is that the Criterion \eqref{firstordercriterion} is satisfied for the flows in group 2, because this group contains the three heaviest loaded flows of the intersection. This implies that the total relative load of group 1, $\hat\rho_{1,\bullet}$, is much smaller than the relative load of any pair of flows in group 2, $\sum_{j\neq i}\hat\rho_{2,j}$, for $i=1,2,3$. Since Criterion \eqref{firstordercriterion} is satisfied, second order interpolations would severely underestimate the mean delays in group 2. In fact, if one would take \eqref{ewapprox2} instead of \eqref{ewapprox1} for group 2, the mean relative error for flows in this group would be increased from $4.07\%$ to $15.9\%$. Figure \ref{scenario5} shows the approximated and simulated mean delays for flow 1 (typical for group 1) and for flow 6 (typical for group 2) against $L\rho$. It can be seen that the derivative in $L\rho=0$ of the approximated mean delay for flow 1 is not correct (it is slightly greater than the actual value), but it does lead to an overall better accuracy. In Scenarios \VI\ and \VII, the division into groups is better, resulting in more favourable results.

Scenarios $\I-\VII$ illustrate how the accuracy of the approximation depends on the distribution of the loads among the groups. Numerical experiments indicate that this is the biggest source of variation in the accuracy. For completeness, we show the effects of different interarrival-time distributions, and different distributions of the departure headways in Scenario \textit{IV}. In our simulations we have fitted phase-type distributions, as suggested in \cite{tijms94}, matching the specified SCVs. We can be brief in discussing the results, displayed in Table \ref{scenarios2Example1}. The variation in the interarrival times and headways does not have a major impact on the accuracy of the approximation, but in general it can be concluded that an increase (decrease) in variation results in a decrease (increase) in the accuracy.
\begin{table}[h!]
\begin{center}
\begin{tabular}{|c|c|c|c|c|c|c|c|}
\hline
Scenario & Groups & $\C_{A_i}^2$ & $\C_B^2$ & \multicolumn{3}{|c|}{\textit{QM1}} & \textit{QM2}\\
\cline{5-7}
         &        &        &            & error & flow & $L\rho$ & \\
\hline
\IV    & $\{1,6\},\{2,5\},\{3,4\}$ & 1 & 1 & 10.3\% & 5 & 0.9 & 3.29\%\\
\hline
\hline
\VIII    & $\{1,6\},\{2,5\},\{3,4\}$ & 0.5 & 1 & 7.8\% & 5 & 0.9 & 1.91 \%\\
\hline
\IX    & $\{1,6\},\{2,5\},\{3,4\}$ & 2 & 1 & 14.7\% & 5 & 0.9 & 5.70\%\\
\hline
\X    & $\{1,6\},\{2,5\},\{3,4\}$ & 1 & 0 & 5.6\% & 4 & 0.9 & 1.57\%\\
\hline
\XI    & $\{1,6\},\{2,5\},\{3,4\}$ & 1 & 0.5 & 8.2\% &  5 & 0.9 & 2.50\%\\
\hline
\XII    & $\{1,6\},\{2,5\},\{3,4\}$ & 1 & 2 & 13.1\% & 5 & 0.9 & 4.45\%\\
\hline
\end{tabular}
\end{center}
\caption{Scenarios \IV, \VIII--\XII\ used in Example 1, and the corresponding quality measures \textit{QM1} and \textit{QM2}.}
\label{scenarios2Example1}
\end{table}

%\begin{table}[h!]
%\begin{center}
%\begin{tabular}{|c|c|c|c|c|c|c|c|}
%\hline
%Scenario & Groups & $\C_{A_i}^2$ & $\C_B^2$ & \multicolumn{3}{|c|}{\textit{QM1}} & \textit{QM2}\\
%\cline{5-7}
%         &        &        &            & error & flow & $L\rho$ & \\
%\hline
%\IV    & $\{1,6\},\{2,5\},\{3,4\}$ & 1 & 1 & 12.7\% & 6 & 0.8 & 5.17\%\\
%\hline
%\hline
%\VIII    & $\{1,6\},\{2,5\},\{3,4\}$ & 0.5 & 1 & 9.9\% & 6 & 0.8 & 3.60 \%\\
%\hline
%\IX    & $\{1,6\},\{2,5\},\{3,4\}$ & 2 & 1 &  15.5\% & 6 & 0.9 & 6.17\%\\
%\hline
%\X    & $\{1,6\},\{2,5\},\{3,4\}$ & 1 & 0 & 8.0\% & 6 & 0.8 & 3.03\%\\
%\hline
%\XI    & $\{1,6\},\{2,5\},\{3,4\}$ & 1 & 0.5 & 10.7\% &  6 & 0.8 & 4.21\%\\
%\hline
%\XII    & $\{1,6\},\{2,5\},\{3,4\}$ & 1 & 2 &  15.6\% & 6  & 0.8 & 6.66\%\\
%\hline
%\end{tabular}
%\end{center}
%\caption{Scenarios \IV, \VIII--\XII\ used in Example 1, and the corresponding quality measures \textit{QM1} and \textit{QM2}. 1 TERM LT}
%\label{scenarios2Example1}
%\end{table}

\subsection*{Example 2: Real-life examples}

\newlength{\figwidth}
\setlength{\figwidth}{0.3\textwidth}
\begin{figure}[htb]
\begin{center}
\parbox{\figwidth}{%
\begin{center}
\includegraphics[width=\linewidth]{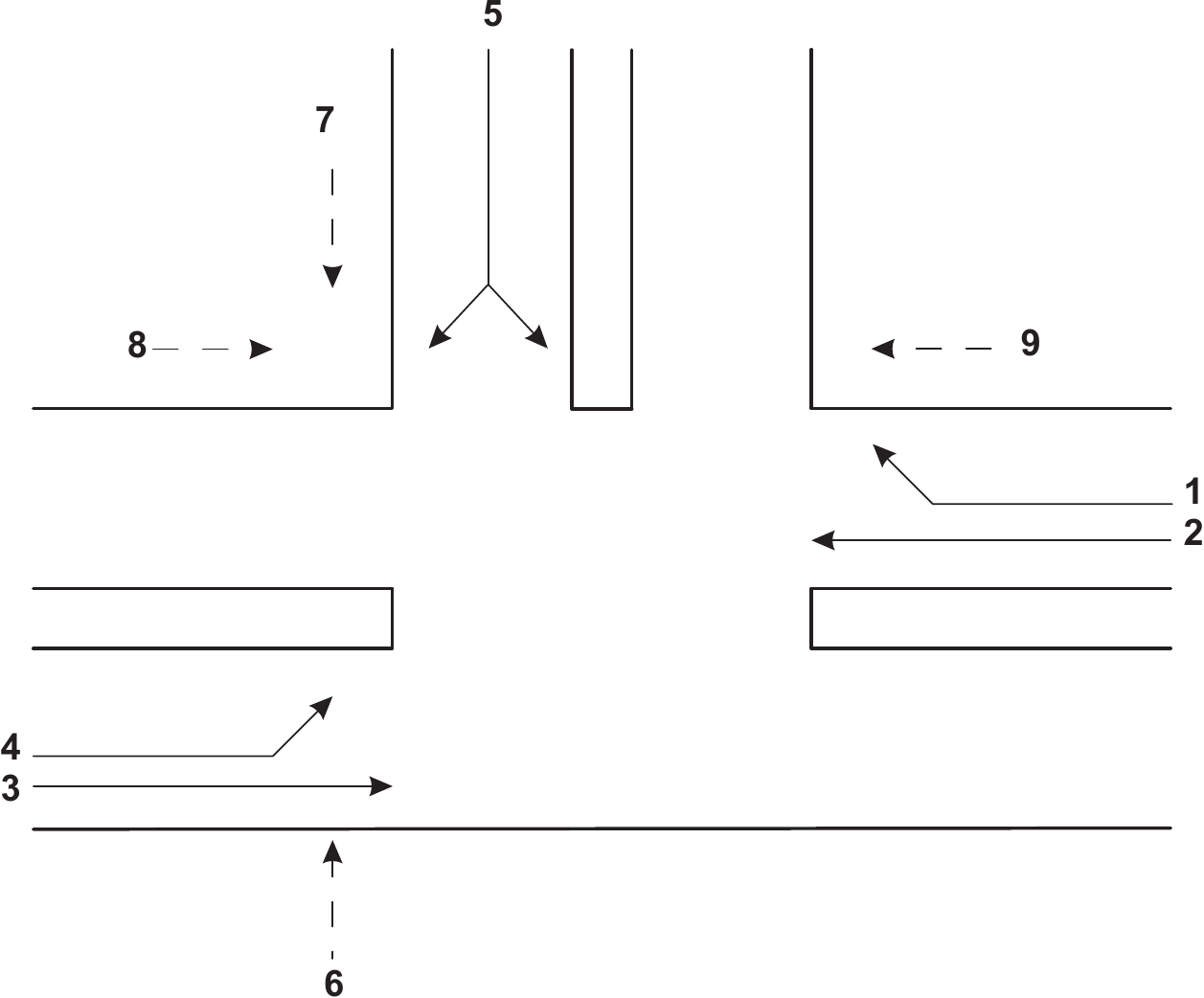}
\end{center}
}
\hfill
\parbox{\figwidth}{%
\begin{center}
\includegraphics[width=\linewidth]{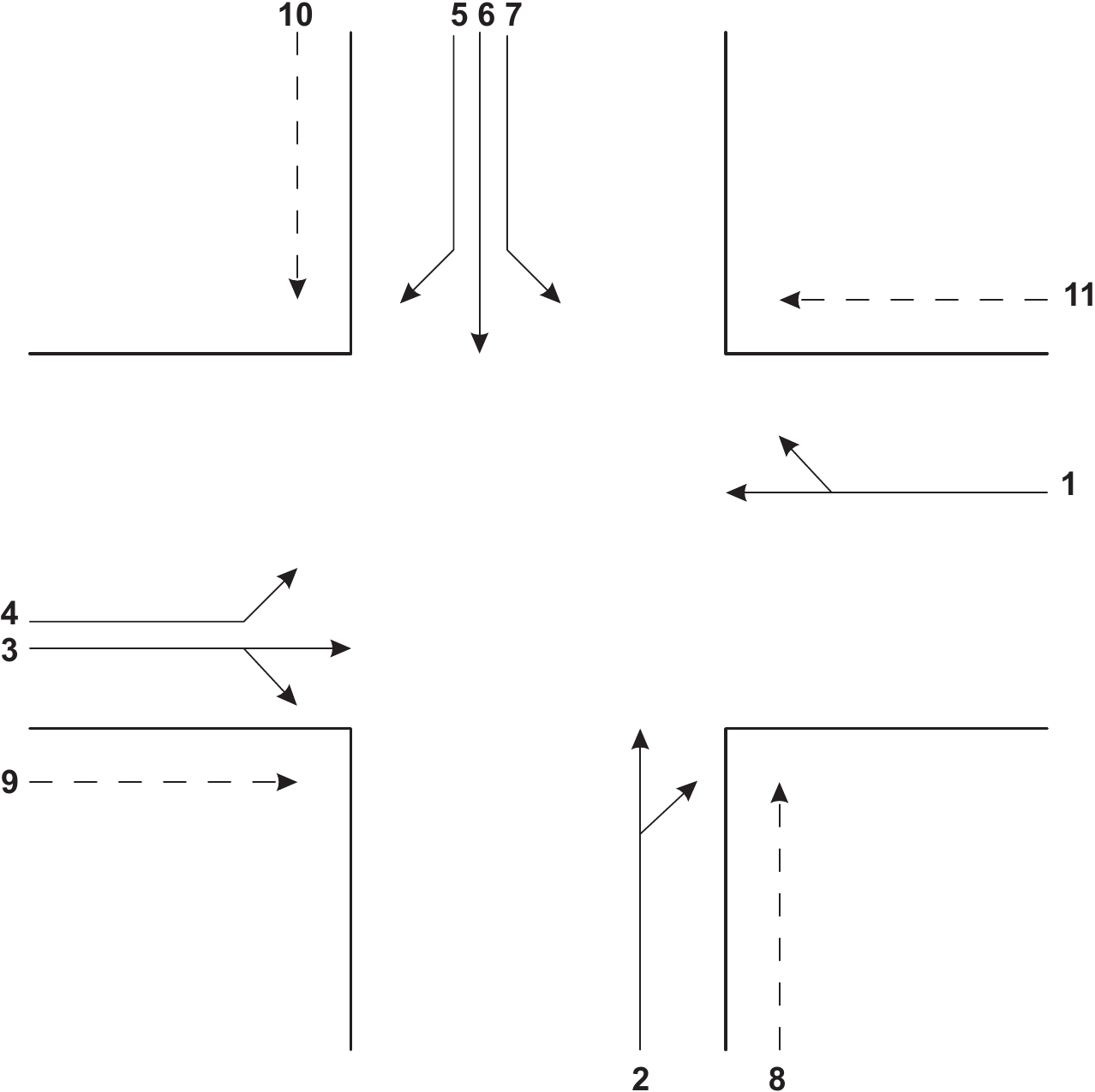}
\end{center}
}
\hfill
\parbox{\figwidth}{%
\begin{center}
\includegraphics[width=\linewidth]{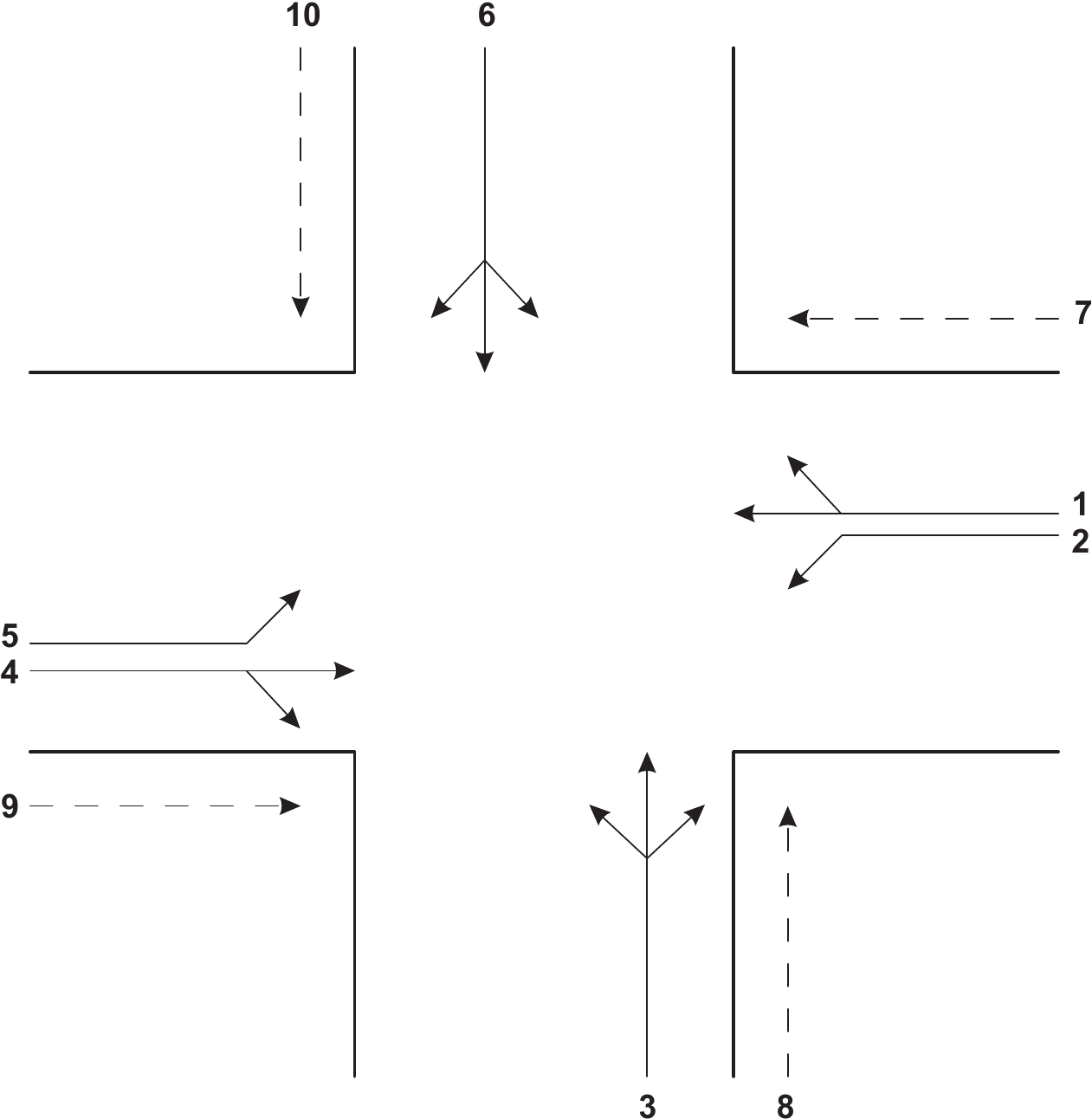}
\end{center}
}\\
\parbox{\figwidth}{%
\begin{center}
Intersection 1
\end{center}
}
\hfill
\parbox{\figwidth}{%
\begin{center}
Intersection 2
\end{center}
}
\hfill
\parbox{\figwidth}{%
\begin{center}
Intersection 3
\end{center}
}\\
\fbox{\includegraphics[width=0.3\textwidth]{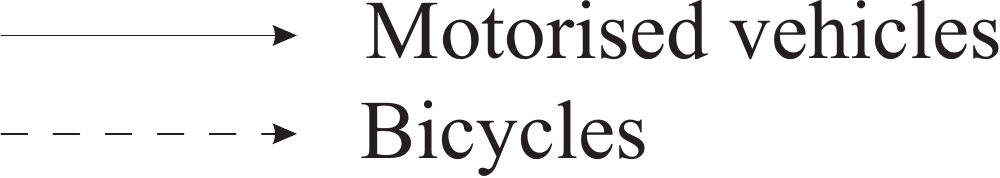}}
\end{center}
\caption{The intersections discussed in Example 2.}
\label{example2intersections}
\end{figure}
In the present subsection we test the approximation on three real-life situations. We take three intersections, graphically displayed in Figure \ref{example2intersections}, and compare approximated mean delays with the simulated values. The first two intersections are located in Eindhoven, The Netherlands, and data have been obtained from the local city council. The data for the third intersection are taken from the Dutch manual for configuration of traffic intersections \cite{handboekcrow}.
Each of these intersections contains several flows for motorised vehicles, and four bicycle lanes. The exact settings for each intersection can be found in Appendix \ref{appendixintersections}. Table \ref{intersectionsExample2} shows the numerical results of the approximated and simulated delays for the three different intersections.
\begin{table}[h!]
\begin{center}
\begin{tabular}{|c|c|c|c|c|}
\hline
Intersection & \multicolumn{3}{|c|}{\textit{QM1}} & \textit{QM2}\\
\cline{2-4}
     & error & flow & $L\rho$ & \\
\hline
1    &  21.3\% & 2 & 0.99 & 6.60\%\\
\hline
2    &  13.6\% & 6 & 0.9 & 4.65 \%\\
\hline
3    &  30.4\% & 4 & 0.9 & 11.62\%\\
\hline
\end{tabular}
\end{center}
\caption{Performance measures \textit{QM1} and \textit{QM2} for the intersections in Section \ref{numericalresults}, Example 2.}
\label{intersectionsExample2}
\end{table}
For all of the intersections, but especially for Intersections 1 and 2, the overall accuracy is quite good, considering the overall mean relative error percentage \textit{QM2}. But at the same time, one of the drawbacks becomes apparent. In practice, these intersections have maximum green times, which are chosen such that the cycle time cannot exceed a specified value. In order to minimise the maximum cycle time, flows are generally divided into groups such that the busiest streams are put together in the same group, as long as no conflicts arise. As discussed in the first example, our approximation gives very accurate results, as long the loads of the busiest flows in a group are not too close to each other. However, some of these examples have at least one group with two busy flows with similar relative loads. In Intersection 1, in group 4, we have $\hat\rho_{4,1}=0.12$ and $\hat\rho_{4,2}=0.11$. In Intersection 3, the highest two relative loads in group 3 are $\hat\rho_{3,1}=0.15$ and $\hat\rho_{3,2}=0.14$. This means that the mean waiting time converges to its HT limit \eqref{EWht} only very slowly. This explains that for these two intersections, the approximation gives rather high relative errors (sometimes more than 20\%) for high loads $(L\rho=0.9,\dots,0.98)$.

\begin{figure}[h!]
%\newlength{\figwidth}
\setlength{\figwidth}{0.42\textwidth}
\mbox{}\hfill\parbox{\figwidth}{%
\begin{center}
\includegraphics[width=\linewidth]{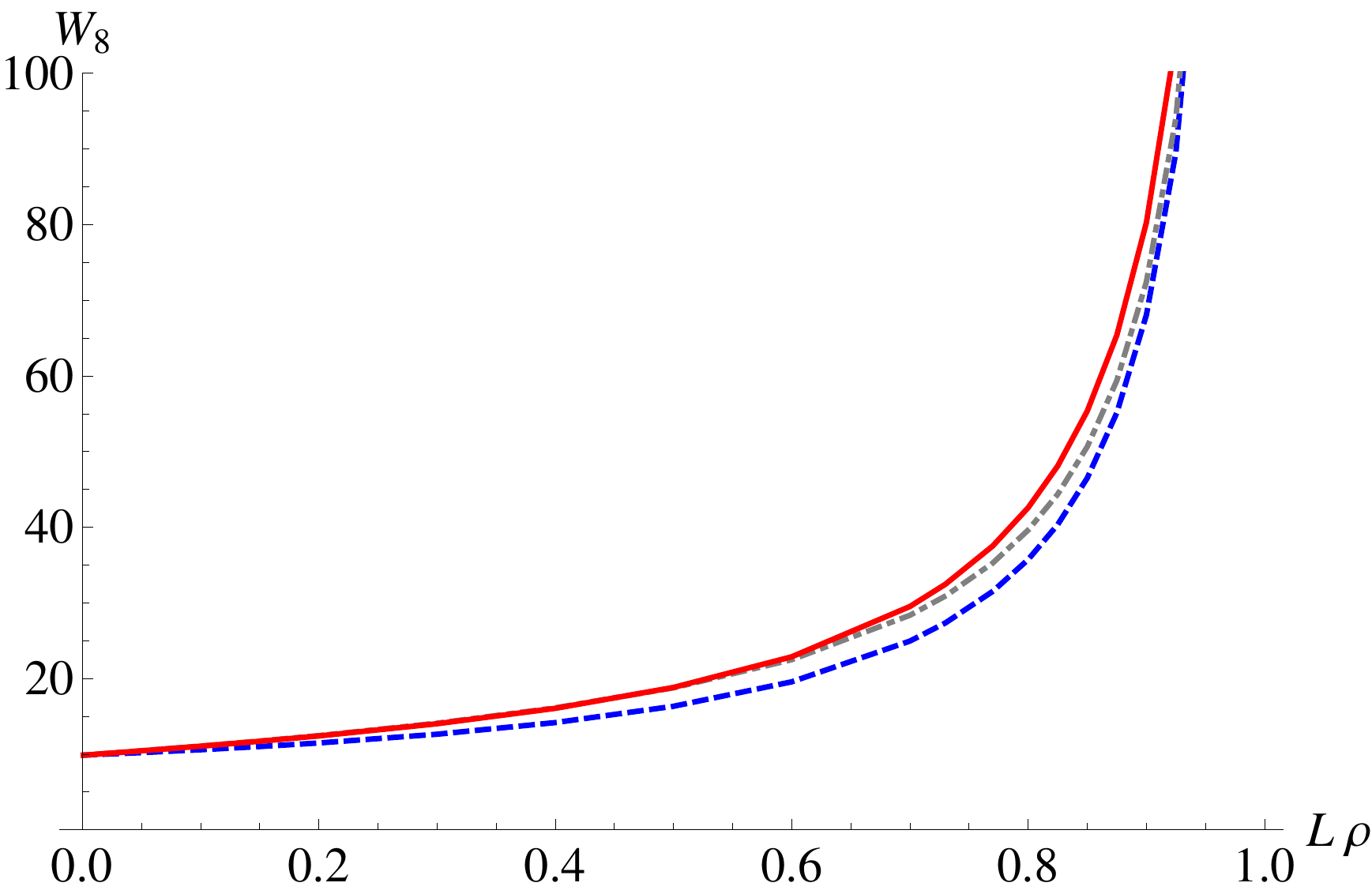}\\
(a)
\end{center}
}
\hfill
\parbox{\figwidth}{%
\begin{center}
\includegraphics[width=\linewidth]{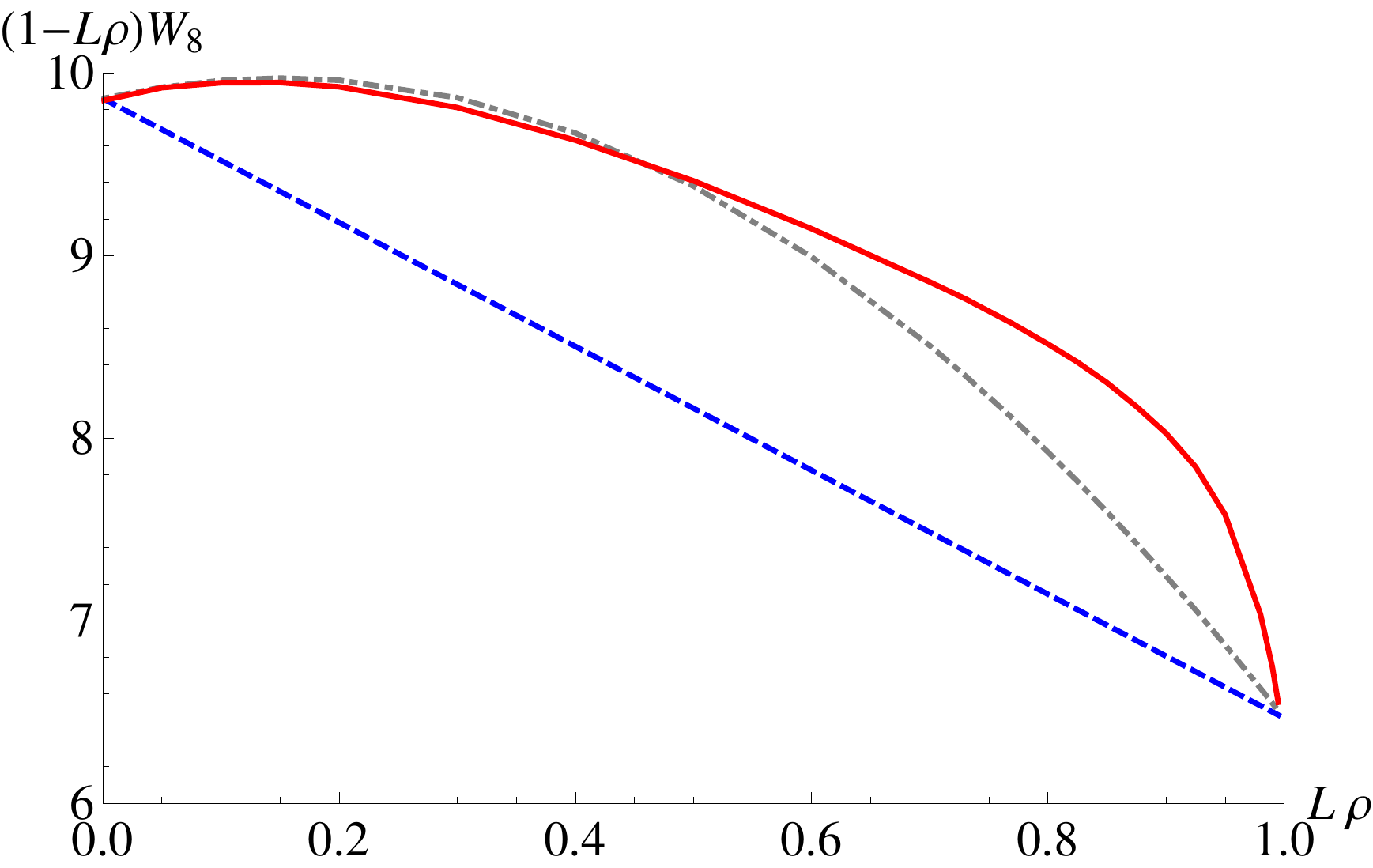}\\
(b)
\end{center}
}
\hfill
\mbox{}\\
\mbox{}\hfill
\parbox{\figwidth}{%
\begin{center}
\includegraphics[width=\linewidth]{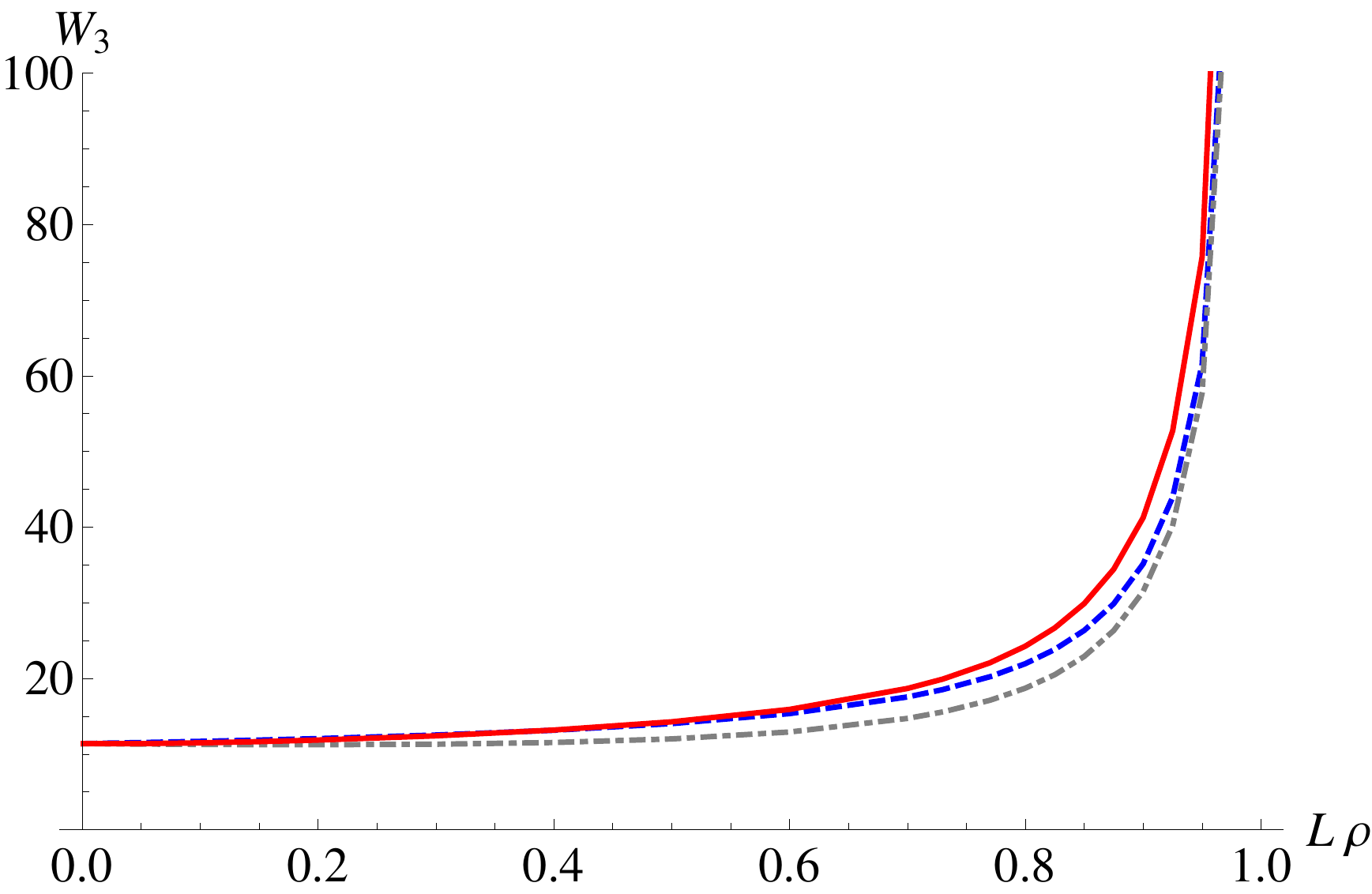}\\
(c)
\end{center}
}
\hfill
\parbox{\figwidth}{%
\begin{center}
\includegraphics[width=\linewidth]{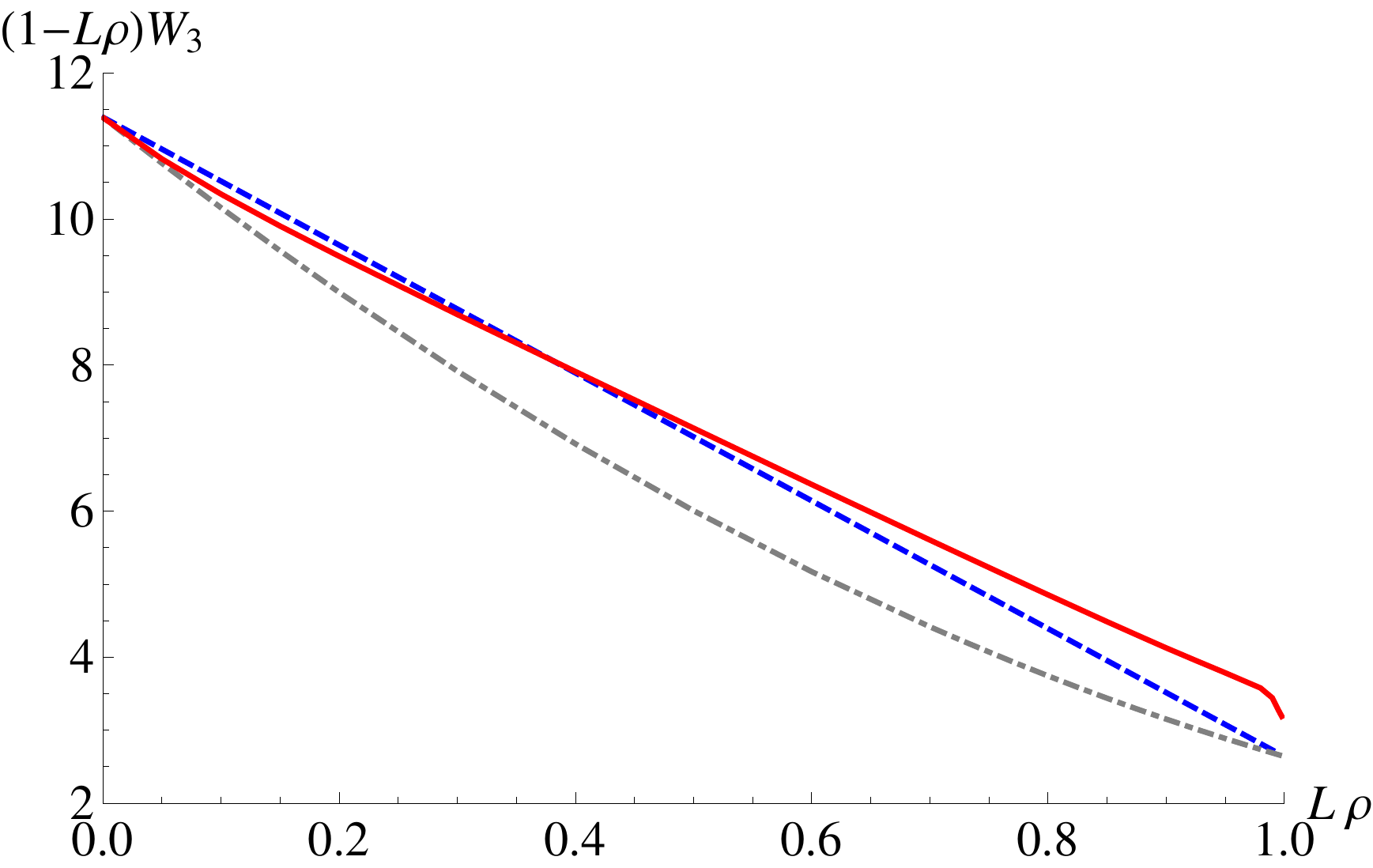}\\
(d)
\end{center}
}
\hfill
\mbox{}\\
\mbox{}\hfill
\parbox{\figwidth}{%
\begin{center}
\includegraphics[width=\linewidth]{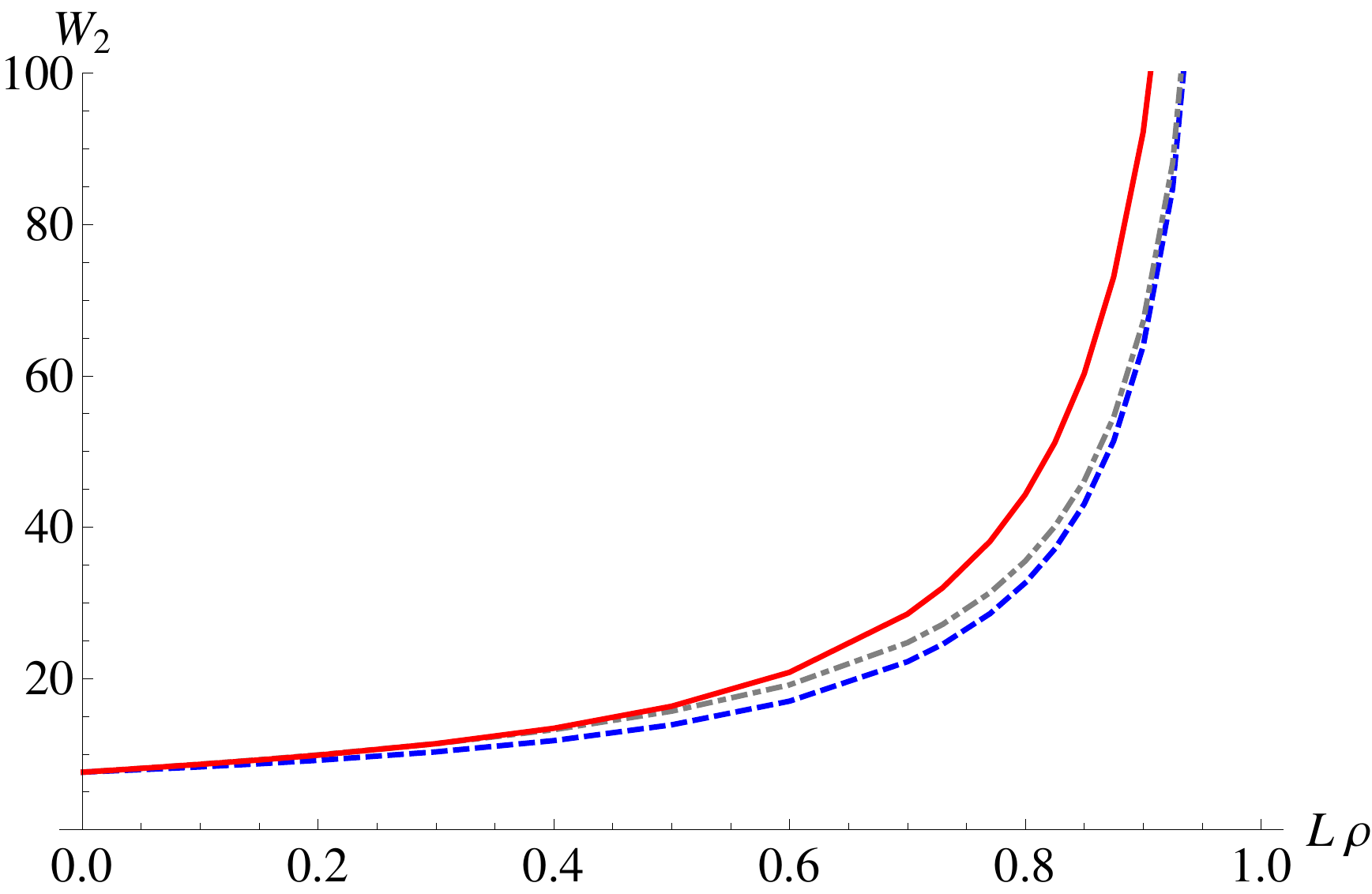}\\
(e)
\end{center}
}
\hfill
\parbox{\figwidth}{%
\begin{center}
\includegraphics[width=\linewidth]{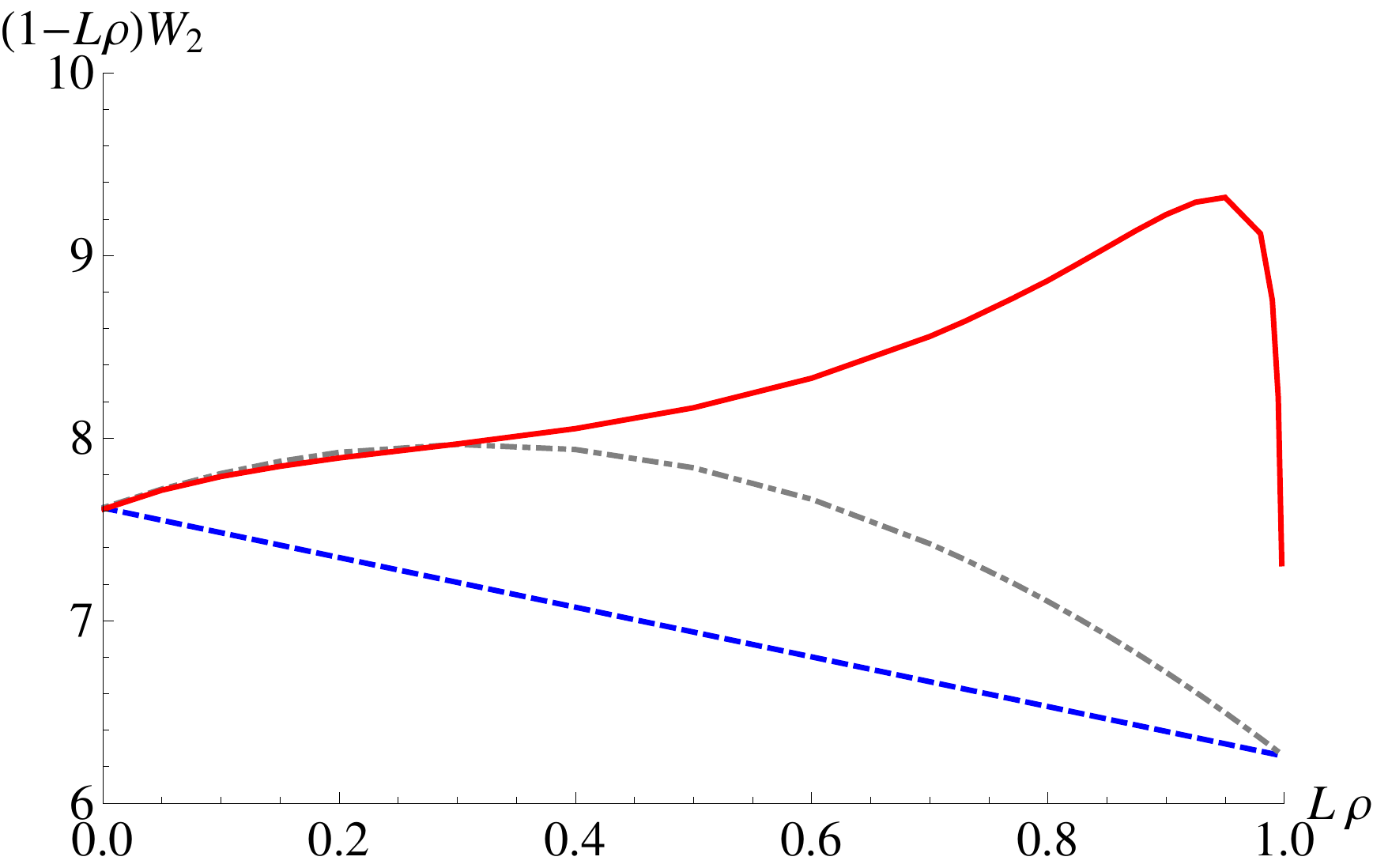}\\
(f)
\end{center}
}
\hfill
\mbox{}\\
\begin{center}
\includegraphics[width=0.3\linewidth]{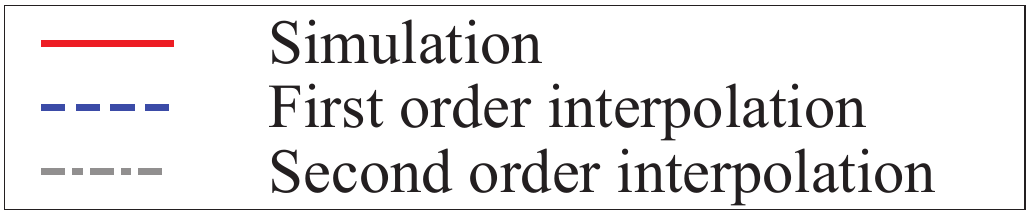}\\
\end{center}
\caption{Three plots of mean waiting times $W_i$ and scaled mean waiting times $(1-L\rho)W_i$, taken from Example 2 in Section \ref{numericalresults}. Figures (a) and (b) correspond to flow 8 of Intersection 2. Figures (c) and (d) correspond to flow 3 of Intersection 1. Figures (e) and (f) correspond to flow 2 of Intersection 3.}
\label{accuracypictures}
\end{figure}
We end this example with a final note on the accuracy of the approximations \eqref{ewapprox1} and \eqref{ewapprox2}. As stated before, the second-order interpolation \eqref{ewapprox2} generally gives better results, unless Criterion \eqref{firstordercriterion} is satisfied. In Figure \ref{accuracypictures}(a) the mean simulated delay for flow 8 of Intersection 2 is plotted, as well as the approximations \eqref{ewapprox1} and \eqref{ewapprox2}. We have chosen this particular example, because in Intersection 2 none of the flows satisfy Criterion \eqref{firstordercriterion}, and no group contains more than one flow with a very high flow ratio. In this situation there is no need to use the first-order interpolation, because it only gives worse results, as can be seen in Figure \ref{accuracypictures}(a). Figure \ref{accuracypictures}(b) shows the scaled mean delay $(1-L\rho)W_i$, also for the simulated delay and the two approximations. Clearly, the first-order interpolation should not be used here. For flow 3 in Intersection 1, Criterion \eqref{firstordercriterion} is satisfied. In Figure \ref{accuracypictures}(c) and \ref{accuracypictures}(d) we see that the non-linearity for small values of $L\rho$ results in an underestimation of the actual delay by the second-order interpolation. The first-order interpolation is more suitable here. Finally, Figures \ref{accuracypictures}(e) and \ref{accuracypictures}(f) show why it is sometimes impossible to find a simple polynomial that describes the behaviour of the scaled delay well. These figures, taken from Intersection 3, show that the HT limit is reached only extremely late $(L\rho > 0.99)$ for flow 2. In fact, for $L\rho=0.99$ there is still a gap between the simulated value and the HT limit. Due to the fact that the HT limit is being approached so slowly, the approximations are not very accurate in the range $0.8 < L\rho < 0.99$, with relative errors greater than 20\% for all flows.
Neither the first-order interpolation, nor the second-order interpolation is complex enough to describe the behaviour of $(1-L\rho)W_i$. These pictures also indicate that a fitting function more sophisticated that a first or second order polynomial is required if one wants to obtain a more accurate closed-form approximation.

\subsection*{Example 3: The impact of the stay-empty assumption}

Throughout the paper we have assumed that vehicles arriving during a green period while their flow is already empty, experience no delay at all because they do not have to accelerate and cross the intersection at normal speed. This assumption, which we refer to as the ``stay-empty assumption'', has been made because it makes the model more realistic than a standard queueing model with queues emptying and possibly refilling during the same green period. In this example we study the impact of this assumption by comparing delays found in the previous example to delays of vehicles in the same intersections, but assuming that queues do not stay empty. Before we carry out the numerical analysis, we discuss the model with refilling queues in more detail. The derivation of the LT limit \eqref{EWlt} uses the stay-empty assumption only in Equation \eqref{ewLT2}. The LT limit of the model with refilling queues is obtained by replacing \eqref{ewLT2} with $\E[W_{g,j}^{(G_{g,l})}] = \E[B_{g,j}] +\O(\rho)$ for $l\neq j$. This would, by the way, slightly simplify the LT limit \eqref{EWlt} and, hence, also simplify the second-order interpolation \eqref{ewapprox2}. Another interesting observation is that the distributions of the scaled delays under HT conditions do not change at all. Without providing a rigorous proof here, we argue that the HT fluid limits remain exactly the same if the stay-empty assumption is abandoned. Consequently, the stability condition $\sum_{g=1}^M\rho_{g,1}<1$ does not change, which can also be proven by making minor modifications to the proof in Appendix \ref{stabilityproof}. Note that the first-order interpolation for the stay-empty model is exactly the same as for the model with refilling queues.
This means that the stay-empty assumption, perhaps surprisingly, does not have much impact at all on the simulated and the approximated mean delays. We show one example of how close the mean delays for the two different models are. In Figure~\ref{stayemptyfig} we study the mean delays of vehicles in flow~3 of Intersection~1 again, just like in Figure~\ref{accuracypictures}(c).
In Figure~\ref{stayemptyfig} the mean delays are plotted against $L\rho$ for the model with and without the stay-empty assumption. As a reference, the approximated mean values are also shown in the same figure. Since we used a first-order interpolation for this example, the approximations for the model with and without the stay-empty assumption are the same. The relative difference between the simulated values for the two models is at most 7\%, for $L\rho=0.98$. Summarising, the approximation for the mean delay can be used for models with and without the stay-empty assumption, although it is recommended to adapt the LT limit slightly as stated before. However, one should keep in mind that scaled mean delay for the model with refilling queues converges to the HT limit slower than for the model with flows that stay empty.
\begin{figure}[h!]
\mbox{}\hfill
%\includegraphics[width=0.3\textwidth]{nonempty1}
%\hfill
\includegraphics[width=0.5\textwidth]{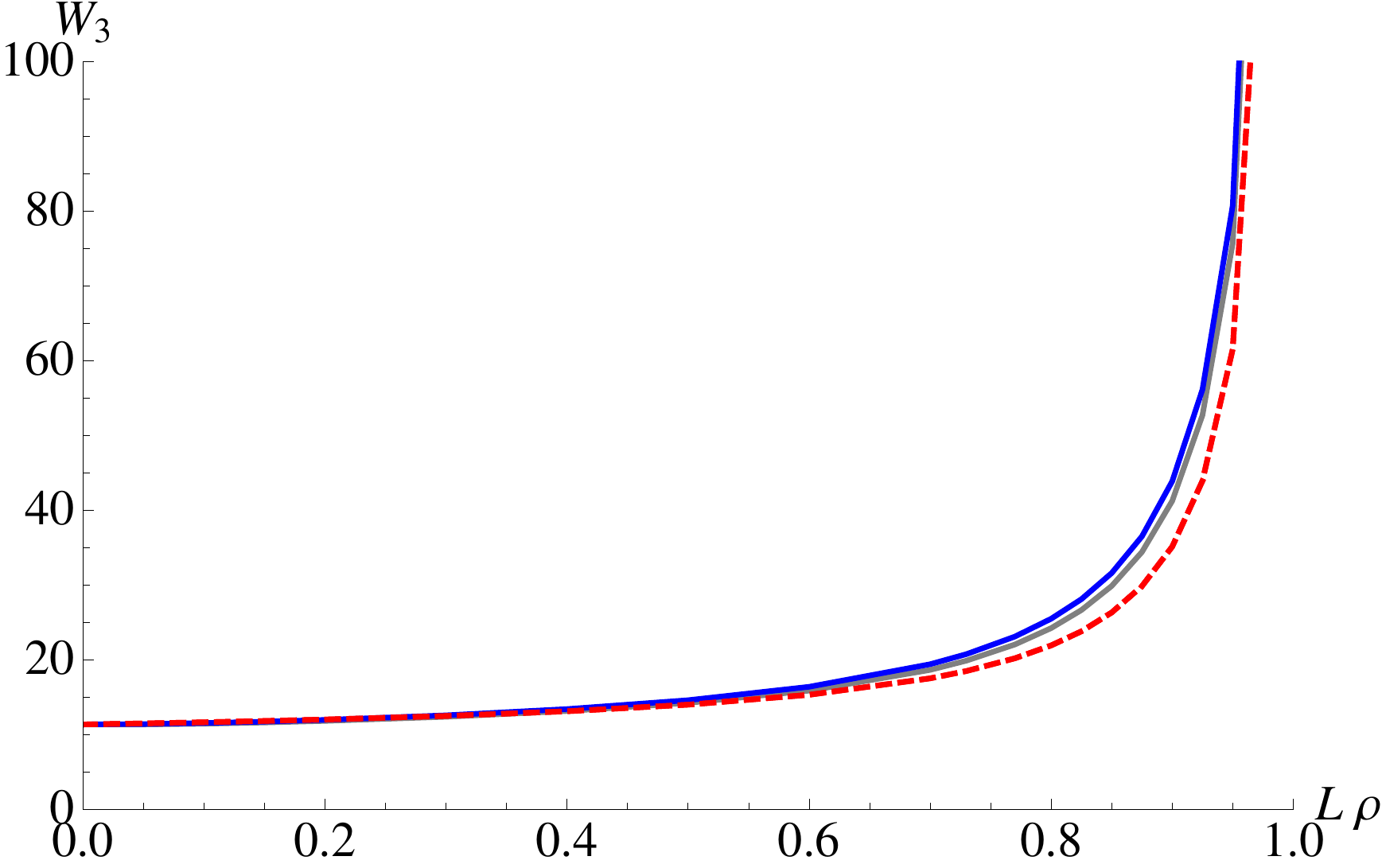}
%\hfill
%\includegraphics[width=0.6\textwidth]{nonempty3}
\hfill
\includegraphics[width=0.4\textwidth]{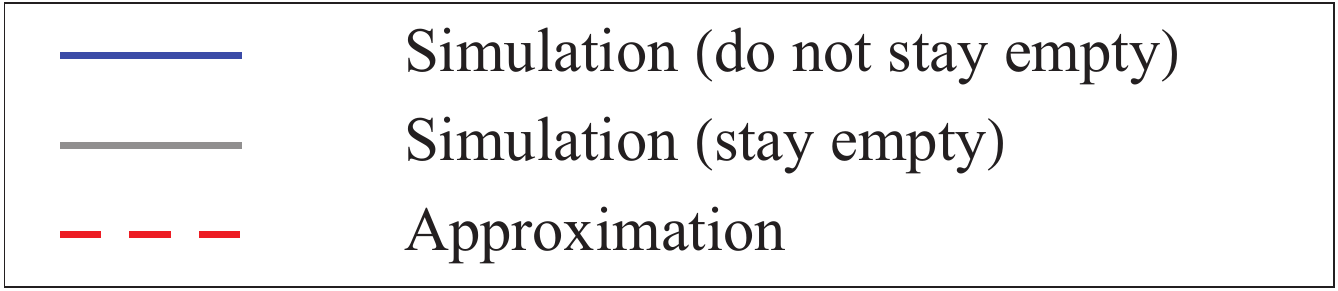}
\hfill\mbox{}
\caption{The mean delays for flow 3 of Intersection 1, for the model with and without the stay-empty assumption, compared to the approximated mean delay.}
\label{stayemptyfig}
\end{figure}

\section{Conclusions and topics for future research}\label{conclusionssection}

Under LT and HT conditions, we managed to accurately describe the behaviour of delay of vehicles approaching a traffic intersection with an exhaustive control policy. Based on these limiting situations, mean delay approximations have been established for any workload. These approximations are easy to implement, and have been tested on real-life situations. The performance of the approximations is good, except when the two greatest flow ratios within a group are very close. %In this situation, it has been shown that it would be very difficult to find a compact closed-form expression that relates the mean delay to the total traffic intensity.

Several extensions of the model considered in the present paper are interesting to study.
\begin{itemize}
\item In order to build a better model for intersections that are part of an arterial system, it would be interesting to allow correlated batch arrivals: groups of vehicles arriving simultaneously. The studies by Levy and Sidi \cite{levysidi91} and Van der Mei \cite{vdmeipollingbatch2001} on polling systems with correlated batch arrivals might prove useful in this respect.
\item Furthermore, using results on polling systems with priorities (see, e.g., \cite{boonadanboxma2queues2010jdeds,boonpriorities}), one could also analyse \emph{conflicting} flows receiving a green light simultaneously. The conflicting flows should be placed in the same lane group and by assigning priority levels to each of the conflicting flows, the right of way rules can be implemented. Only minor adaptations to the LT and HT limits are required to introduce the priority levels.
\item It may also be possible to extend the \textit{distributional} approximations for polling systems developed by Dorsman et al. \cite{dorsman2010} to traffic intersections. The two required ingredients of such a distributional approximation are both derived in the present paper. That is, we need an HT approximation for the waiting-time distribution (as derived in Section \ref{HTsection}) in conjunction with a mean waiting-time approximation for general load (as obtained in Section \ref{interpolationssection}). Subsequently, the HT distributional approximation should be refined such that its mean coincides with the mean waiting-time approximation, while the resulting distributional approximation keeps the correct limiting behavior in HT after refinement. %In \cite{dorsman2010} support of the quality of such a distributional approximation is not only provided by results of an extensive numerical evaluation, but also by the fact that the approximation is in line with existing results for various limiting cases.
\item From a practical point of view, it would be desirable to have a model that allows flows to be part of multiple groups, instead of just one. However, this would complicate the analysis considerably at some points, because it is not straightforward anymore to determine which flows are dominant in each group.
\item Finally, possibly the greatest challenge, is the analysis of intersections with time-limited service. Although these are by far the most commonly used intersections in practice, hardly any detailed results are known to reliably estimate the expected delays. %Possibly, the current study can form the basis of an optimisation algorithm that finds optimal settings, for a time-limited system, to minimise the (weighted) average delay of vehicles.
\end{itemize}

\section*{Acknowledgements}

The authors wish to thank Onno Boxma for valuable discussions and for useful comments on earlier drafts of the present paper.

\appendix
\section*{Appendix}
\section{Proof of stability}\label{stabilityproof}

In this appendix, we provide a rigorous proof of Theorem \ref{stabilitytheorem}. This theorem states that the stability condition of an intersection with exhaustive traffic control only depends on the flow ratios of the dominant flows in each group.

\subsection{Model}
For each flow $\{g,j\}$, $g =1,\dots,M$ and $j=1,\dots,N_g$, there is an i.i.d. sequence of interarrival times $\{A_{g,j,k}\}$ and an i.i.d.\ sequence of headways $\{ B_{g,j,k} \}$. All sequences are mutually independent. We assume that each sequence has a finite first moment and define the appropriate rates:
\begin{eqnarray*}
\lambda_{g,j} &=& 1/\mathbb{E}[A_{g,j,1}],\\
\mu_{g,j} &=& 1/\mathbb{E}[B_{g,j,1}].
\end{eqnarray*}
We assume that the interarrival time distribution is unbounded and spread-out, in other words $\P( A_{g,j,1} > T) >0$ for all $T>0$, and for some integer $\ell$,
\[
\mathbb{P}(A_{g,j,1}+\cdots + A_{g,j,\ell} \in \dd x) \geq q(x)\dd x
\]
where
\[
\int_0^\infty q(x)\dd x>0.
\]

The control policy is to allow flows $\{1,1\},\dots,\{1,N_1\}$ to operate until all of them are empty, then allow flows $\{2,1\},\dots,\{2,N_2\}$ to operate until they are all empty and so on, until flows $\{M,1\},\dots,\{M,N_M\}$ have operated. Then, this cycle is repeated. We assume that when switching for the $m$th time from group 1 to group 2, there is an all-red period of $R_{1,m}$. In general, for the $m$th switch from group $g$ to group $g+1$ (all group indices are modulo $M$), the all-red time is given by $R_{g,m}$. We assume that for $g=1,\dots,M$, $\{ R_{g,m} \}$ are i.i.d.\ sequences with $\mathbb{E}[R_{g,1}]<\infty$.

Let $E_{g,j}(t)$ be the number of vehicles arriving to flow $\{g,j\}$ in $(0,t]$. Let $S_{g,j}(t)$ be the \emph{potential} number of service completions by flow $\{g,j\}$ in $(0,t]$, i.e.\  $S_{g,j}(t)$ is the number of vehicles that would have departed from flow $\{g,j\}$ between times 0 and $t$ if there were no idling of server $\{g,j\}$. Let $A^\textit{res}_{g,j}(t)$ be the residual interarrival time at time $t$ for stream $\{g,j\}$ and let $B^\textit{res}_{g,j}(t)$ be the residual headway of the vehicle crossing at flow $\{g,j\}$. We assume that $A^\textit{res}_{g,j}(t)$ and $B^\textit{res}_{g,j}(t)$ are right continuous. Thus we have
\begin{eqnarray*}
E_{g,j}(t) & = & \max \{ n\geq 0:A^\textit{res}_{g,j}(0)+A_{g,j,1}+\cdots +A_{g,j,n-1} \leq t \}, \\
S_{g,j}(t) & = & \max \{ n\geq 0:B^\textit{res}_{g,j}(0)+B_{g,j,1}+\cdots +B_{g,j,n-1} \leq t \}.
\end{eqnarray*}
where the maximum of an empty set is defined to be zero. Let $T_{g,j}(t)$ be the cumulative busy time for server $\{g,j\}$ in $(0,t]$. Then the number at server $\{g,j\}$, $Q_{g,j}(t)$ at time $t$, is
\begin{equation} \label{q}
Q_{g,j}(t)=Q_{g,j}(0)+E_{g,j}(t)-S_{g,j}(T_{g,j}(t)),
\end{equation}
where $T_{g,j}(t)$ is determined by the control policy. Define
\[
X(t)=(Q_{g,j}(t),A_{g,j}(t),B_{g,j}(t),I(t):g=1,\dots,M,~j=1,\dots,N_g),
\]
where $I(t)$ is the group number $(1,\dots,M)$ that receives a green light at time $t$ (it can be set to an arbitrary value during the all-red times). Then it is not difficult to see that $X= \{ X(t)\}$ is a Markov process and has the strong Markov property. The assumption that the interarrival time distribution is unbounded and spread-out allows us to conclude that the states where $Q_{g,j}(t)=0$ are reachable and to show the existence of a continuous component for $X$, see Meyn and Down \cite{meydow94}.

\subsection{Stability of fluid models}
Let $q=\sum_{g=1}^M\sum_{j=1}^{N_g}Q_{g,j}(0)$. Suppose that the function $(\bar{Q}_{g,j}(\cdot),\bar{T}_{g,j}(\cdot):g=1,\dots,M,\ j=1,\dots,N_g)$ is a limit point of
\[
(q^{-1}Q_{g,j}(qt),q^{-1}T_{g,j}(qt):g=1,\dots,M,\ j=1,\dots,N_g),
\]
when $q\rightarrow \infty$. When it exists $(\bar{Q}_{g,j}(t),\bar{T}_{g,j}(t):g=1,\dots,M,\ j=1,\dots,N_g)$ is called a \emph{fluid limit} of the system. Since we have been able to describe the system dynamics in the form (\ref{q}), Dai \cite[Theorem~2.3.2]{dai99} yields that a fluid limit exists (it may not be unique). Furthermore, each of the fluid limits is a solution of the following set of conditions, known as the {\it fluid model}.
\begin{eqnarray*}
\bar{Q}_{g,j}(t) &=& \bar{Q}_{g,j}(0)+\lambda_{g,j}(t)-\mu_{g,j}\bar{T}_{g,j}(t)\\
\bar{Q}_{g,j}(t) &\geq& 0\\
\bar{T}_{g,j}(0) &=& 0 \ \mbox{and $\bar{T}_{g,j}(\cdot)$ is non-decreasing}
\end{eqnarray*}
plus additional conditions on $\bar{T}_{g,j}(\cdot)$ due to the control policy. This last condition is usually the most important, even though it is vague at this point.

The fluid model is said to be {\it stable} if there exists a fixed time $t_0$ such that $\sum_{g=1}^M\sum_{j=1}^{N_g}\bar{Q}_{g,j}(t)=0$, for all $t\geq t_0$, for any solution of the fluid model. The fluid model is said to be {\it unstable} if for every solution of the fluid model with $\sum_{g=1}^M\sum_{j=1}^{N_g}\bar{Q}_{g,j}(0)=0$, there exists a $\delta>0$ such that $\sum_{g=1}^M\sum_{j=1}^{N_g}\bar{Q}_{g,j}(\delta) \neq 0$. Thus, stability of the fluid model states that eventually all flows will drain, and once drained, they remain empty.

The connections between the Markov process and the fluid model are as follows: if the fluid model is stable, there exists a unique invariant probability for $X$ (Theorem~4.2 of Dai \cite{dai95}). If the fluid model is unstable, then $X$ is transient (Theorem~2.5.1 of Dai \cite{dai99}). If one is interested in finiteness of moments, then under corresponding assumptions on the underlying random variables, stability of the fluid model yields finiteness of moments, see Dai and Meyn \cite{daimey95}. For example, assuming finite second moments on the underlying random variables, stability of the fluid model implies the existence of an invariant probability and finite mean queue lengths.

For the model under consideration, let $\rho_{g,j}=\lambda_{g,j}/\mu_{g,j}$ and assume without loss of generality that $\rho_{g,1} > \rho_{g,2} > \dots > \rho_{g,N_g}$ for all $g=1,\dots,M$. Then we have the following result.

\begin{proposition}
(i) If $\sum_{g=1}^M\rho_{g,1}<1$, the fluid model is stable and, thus, an invariant probability $\varphi$ exists for $X$.\\
(ii) If $\sum_{g=1}^M\rho_{g,1}>1$, then the fluid model is unstable and $X$ is transient.
\end{proposition}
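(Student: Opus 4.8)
The plan is to establish (in)stability of the fluid model and then invoke the dictionary already recalled above: by Dai \cite[Theorem~4.2]{dai95} a stable fluid model yields a unique invariant probability for $X$, and by Dai \cite[Theorem~2.5.1]{dai99} an unstable fluid model yields transience of $X$. Thus it suffices to prove: (i) if $\sum_{g=1}^M\rho_{g,1}<1$ then every fluid solution with normalised initial data satisfies $\sum_{g,j}\bar{Q}_{g,j}(t)=0$ for all $t$ beyond one fixed time $t_0$; and (ii) if $\sum_{g=1}^M\rho_{g,1}>1$ then no fluid solution with $\bar{Q}(0)=0$ can stay at $0$.

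For (i), I would first make precise the structural fact that drives the whole paper: from the second cycle onwards the \emph{dominant} flow $\{g,1\}$ is the last flow of its group to be emptied during the green period of group~$g$. During cycle~$1$ every flow is emptied at least once, since exhaustive service only ends a green period of group~$g$ once \emph{all} flows of group~$g$ are empty; moreover this first cycle has length bounded by a constant, because the normalised initial total workload is at most~$1$ and hence every $\rho_{g,j}\le\rho_{g,1}<1$ and each flow clears within a bounded time. From cycle~$2$ on, at the start of its green period a flow $\{g,j\}$ sits at level $\rho_{g,j}$ times the elapsed intervisit time $\tau_g$ and then drains at net rate $1-\rho_{g,j}$, clearing after $\tau_g\,\rho_{g,j}/(1-\rho_{g,j})$; since $x\mapsto x/(1-x)$ is increasing and $\rho_{g,1}=\max_j\rho_{g,j}$, flow $\{g,1\}$ clears last, so the green-period lengths are determined by the dominant flows alone. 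Consequently $\Phi(t):=\sum_{g=1}^M\bar{Q}_{g,1}(t)/\mu_{g,1}$ evolves exactly as the total workload in an ordinary single-server exhaustive polling fluid model with loads $\rho_{g,1}$ and vanishing switch-over times: whenever $\Phi(t)>0$ the server, possibly after an instantaneous run of empty zero-length visits, is draining some dominant flow at rate~$1$ while the other $M-1$ dominant flows fill at their rates, so $\dot\Phi=\sum_{g}\rho_{g,1}-1<0$. Hence $\Phi$ reaches $0$ at a time $t^\star$ that is bounded uniformly over all (normalised) solutions, and stays at $0$. As $t\uparrow t^\star$ the cycle length tends to~$0$, and since every non-dominant flow $\{g,j\}$ is emptied once per cycle and thereafter refills only at rate $\rho_{g,j}$, it too tends to~$0$; by continuity the whole system is empty at $t^\star$ and remains empty, which proves stability of the fluid model with $t_0:=\sup t^\star$.

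For (ii), I would argue by contradiction that when $\sum_{g=1}^M\rho_{g,1}>1$ no fluid solution can have $\sum_{g,j}\bar{Q}_{g,j}(t)=0$ throughout an interval $[0,\delta]$; since fluid limits are Lipschitz and $\bar{Q}(0)=0$, this forces $\bar{Q}(t)\neq0$ for some $t\in(0,\delta]$, hence instability. Indeed, on such an interval $\bar{Q}_{g,j}\equiv0$ gives $\mu_{g,j}\dot{\bar{T}}_{g,j}=\lambda_{g,j}$, i.e.\ $\dot{\bar{T}}_{g,j}=\rho_{g,j}$ a.e. Writing $u_g\in\{0,1\}$ for the indicator that the server attends group~$g$, the only physical constraint is $\sum_{g}u_g\le1$, and the control policy gives $\dot{\bar{T}}_{g,j}\le u_g$; therefore $u_g\ge\rho_{g,j}$ for all $j$, in particular $u_g\ge\rho_{g,1}$, and summing over $g$ yields $1\ge\sum_{g}u_g\ge\sum_{g}\rho_{g,1}>1$, a contradiction. (If some $\rho_{g,1}\ge1$ the conclusion is immediate from that single overloaded flow.)

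The step I expect to be the main obstacle is turning the informal statement ``the dominant flows govern the dynamics'' into a rigorous statement about fluid solutions: one must control the pre-cycle-$2$ transient uniformly over normalised initial conditions, and --- more delicately --- handle the Zeno-like degeneracy of the exhaustive fluid model as $\Phi\downarrow0$, where the server cycles arbitrarily fast and all cycle lengths vanish, so as to conclude that \emph{every} flow, not merely the dominant ones, is empty from time $t^\star$ on. The remaining steps are routine manipulations of the fluid-model equations together with the capacity constraint $\sum_g u_g\le1$.
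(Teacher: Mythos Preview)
Your overall strategy matches the paper's: reduce to the fluid model, use the dominant-flow workload $\Phi(t)=\sum_g \bar Q_{g,1}(t)/\mu_{g,1}$ as a Lyapunov function, and exploit $\dot\Phi=\sum_g\rho_{g,1}-1$ whenever the server is draining a dominant flow. Part~(ii) is also essentially the same, just phrased contrapositively; the paper simply notes $\sum_g \dot{\bar T}_{g,1}\le 1$ and hence $\dot\Phi\ge\sum_g\rho_{g,1}-1>0$.

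The one substantive difference is in how you pass from $\Phi\equiv 0$ to $\bar Q\equiv 0$. You propose a Zeno/continuity argument (cycles shrink, every flow is emptied once per cycle, so non-dominant levels tend to~$0$ as $t\uparrow t^\star$), and you correctly flag this as the delicate step. The paper avoids this difficulty altogether by proving a pointwise workload-dominance inequality: after one full sweep through all groups (at some finite time $t_M$), one has
\[
\frac{\bar Q_{g,1}(t)}{\mu_{g,1}}\ \ge\ \frac{\bar Q_{g,j}(t)}{\mu_{g,j}}\qquad\text{for all }t\ge t_M,\ j\ge 2.
\]
This follows because after the first sweep each flow has been emptied once, so at the start of the next green period of group~$g$ the workload in $\{g,j\}$ is $\rho_{g,j}$ times the common intervisit time; and during the green period both sides drain at the same net rate in workload units, so the inequality is preserved. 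With this in hand, $\Phi(t)=0$ immediately forces every $\bar Q_{g,j}(t)=0$, and the obstacle you identified disappears: no limit over shrinking cycles is needed, and the ``stays empty'' part for non-dominant flows is automatic once you know $\Phi$ stays at~$0$. Your route can be pushed through, but the dominance inequality is the cleaner device and is exactly the missing lemma that dissolves your main worry.

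Two small points. First, the paper also disposes of the all-red times explicitly (they vanish under the $q^{-1}$ scaling), and shows that the fluid server switches instantaneously once a group is empty; you assume both implicitly. Second, your bound on the length of the first cycle should be stated carefully: the paper gives the explicit recursion $t_g=t_{g-1}+\max_j (x_{g,j}+\lambda_{g,j}t_{g-1})/(\mu_{g,j}-\lambda_{g,j})$, which is what gives a uniform $t_M$ over normalised initial conditions.
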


\begin{proof}
 (i) First, we show that the all-red periods can be ignored in the stability analysis. Suppose that at time 0, group $g$ has just completed and group $g'=(g+1)\mod M + 1$ begins service, where $g'$ is such that $\bar{Q}_{g',j}(0)>0$ for some $j \in \{ 1,\ldots, N_{g'} \}$. We then have that there exists $\delta>0$ such that  $\bar{Q}_{g',j}(s)>0$ for $s\in [0,\delta]$. In this case, we have
\begin{eqnarray*}
\lefteqn{\lim_{q\rightarrow \infty} \frac{T_{g,j}(\delta q)}{q}}\\
& = & \lim_{q\rightarrow \infty} \frac{\max (\delta q-R_g,0)}{q}\\
& = & \lim_{q\rightarrow \infty} \left( \max \left( \delta-\frac{R_g}{q},0 \right) \right) \\
& = & \delta
\end{eqnarray*}
from which we have $\frac{d}{dt} \bar{T}_{g,j}(t) |_{t=0} = 1$ and thus without loss of generality we can assume that the all-red times are zero.

Next, we show that if we are serving group $g$ at time $t$ and $\bar{Q}_{g,j}(t)=0$ for all $j\in \{ 1,\ldots,N_g \}$, then for any fluid limit, we immediately switch to a new group (unless for all $g,j$, $\bar{Q}_{g,j}(t)=0$). Note that for group $g$, queues $1,\ldots,N_g$ operate as $N_g$ stable queues in parallel (until all queues are simultaneously empty), which is a special case of a stable generalized Jackson network (as $\rho_{g,j} < 1$ for all $j$). Denote by $T_0^{(n_1,\ldots,n_{N_g})}$ the time to reach the state when all queues are empty, starting from the initial condition is that there are $n_i$ jobs at queue $i$. Theorem~3.8 of \cite{meydow94} implies that all queues being empty is reachable and as
\[
\frac{Q_{g,j}(qt)}{q} \rightarrow 0,
\]
then
\[
\frac{T_0^{(Q_{g,1}(qt),Q_{g,2}(qt),\ldots,Q_{g,N_g}(qt))}}{q} \rightarrow 0
\]
and thus on the fluid scale, switching away from group $g$ is immediate when all of the queues are empty (on the fluid scale).

Next, we show that from an arbitrary initial condition for the fluid model (i.e.\ $\bar{Q}_{g,j}(0) = x_{g,j} \geq 0$) and with the server initially at group 1, we have that group $i$ is switched away from for the first time at time $t_i$, where $t_i$ is given by:
\begin{eqnarray*}
t_1 & = & \max_{1\leq j \leq N_1} \left\{ \frac{x_{1,j}}{\mu_{1,j}-\lambda_{1,j}} \right\} \\
t_2 & = & t_1 + \max_{1\leq j \leq N_2} \left\{ \frac{x_{2,j}+\lambda_{2,j}t_1}{\mu_{2,j}-\lambda_{2,j}} \right\} \\
& \vdots \\
t_M & = & t_{M-1} + \max_{1\leq j \leq N_M} \left\{ \frac{x_{M,j}+\lambda_{M,j}t_{M-1}}{\mu_{M,j}-\lambda_{M,j}} \right\}.
\end{eqnarray*}
Clearly, if $\lambda_{g,j} / \mu_{g,j} < 1$ for all $g,j$, then $t_M < \infty$. However, at this point, by definition, it is clear that for $t\geq t_M$,
\begin{equation} \label{e:dominant}
\frac{\bar{Q}_{g,1}(t)}{\mu_{g,1}} \geq \frac{\bar{Q}_{g,j} (t)}{\mu_{g,j}}
\end{equation}
for all $g$ and $j \in \{ 2,\ldots,N_g \}$.

Now, consider
\[
V(t)=\sum_{g=1}^M \frac{\bar{Q}_{g,1}(t)}{\mu_{g,1}}.
\]
We trivially see that $\sum_{g=1}^M \frac{d}{dt} \bar{T}_{g,1}(t) = 1$ if $V(t)>0$, and thus for $t \geq t_M$ and if $V(t)>0$, using (\ref{e:dominant}),  $\frac{d}{dt} V(t)=\sum_{g=1}^M \rho_{g,1} - 1 < 0$. Therefore, there exists a $T<\infty$ such that $V(t)=0$ for all $t \geq T$ and again using (\ref{e:dominant}), we conclude that for all $g,j$ $\bar{Q}_{g,j}(t) = 0$ for all $t\geq T$.

This completes the proof of (i).

To show (ii), as $\sum_{g=1}^M \frac{d}{dt} \bar{T}_{g,1}(t) \leq 1$, we see that $\frac{d}{dt} V(t) \geq \sum_{g=1}^M \rho_{g,1} -1$, which is strictly positive and thus the fluid model is unstable.
\end{proof}

\section{Input settings for Example 2}\label{appendixintersections}
In this appendix we give the input settings for the intersections discussed in Section \ref{numericalresults}, Example 2.
\begin{center}
\begin{tabular}{|l|l|}
\multicolumn{2}{c}{\rule{0.9\textwidth}{0mm}} \\
\hline
\multicolumn{2}{|c|}{Intersection 1} \\
\hline
$N$ & 9 \\
\hline
Vehicle types & 5 flows for cars $(1,\dots,5)$, 4 flows for bicycles $(6,\dots,9)$\\
\hline
Arrival processes & Poisson \\
\hline
Arrival intensities & 280, 930, 700, 120, 240, 60, 60, 60, 60 \\
(cars/bikes per hour) & \\
\hline
Saturation flow rates  & 1800, 1900, 1900, 1700, 1700,  \\
(cars/bikes per hour) & 10000, 10000, 10000, 10000\\
\hline
SCVs of the headways & \rule[-1.5ex]{0cm}{4ex}$\C^2_{B_i}=1$ for cars $(i \leq 5)$, and $\C^2_{B_i}=0$ for bikes $(i\geq6)$\\
\hline
Groups & $\{2,3,8,9\}, \{4\}, \{6,7\}, \{1,5\}$ \\
\hline
All-red times & $2,8,4,5$\\
\hline
\end{tabular}

\noindent\begin{tabular}{|l|l|}
\multicolumn{2}{c}{\rule{0.9\textwidth}{0mm}} \\
\hline
\multicolumn{2}{|c|}{Intersection 2} \\
\hline
$N$ & 11 \\
\hline
Vehicle types & 7 flows for cars $(1,\dots,7)$, 4 flows for bicycles $(8,\dots,11)$\\
\hline
Arrival processes & Poisson \\
\hline
Arrival intensities & 263, 344, 332, 381, 148, 442, 258, 60, 60, 60, 60 \\
(cars/bikes per hour) & \\
\hline
Saturation flow rates & 1950, 1950, 1950, 1800, 1700, 1950, 1700,  \\
(cars/bikes per hour) & 10000, 10000, 10000, 10000\\
\hline
SCVs of the headways & \rule[-1.5ex]{0cm}{4ex}$\C^2_{B_i}=1$ for cars $(i \leq 7)$, and $\C^2_{B_i}=0$ for bikes $(i\geq8)$\\
\hline
Groups & $\{1,3,9,11\}, \{2,5\}, \{4,8\}, \{6,7,10\}$ \\
\hline
All-red times & $8,1,4,6$\\
\hline
\end{tabular}
\begin{tabular}{|l|l|}
\multicolumn{2}{c}{\rule{0.9\textwidth}{0mm}} \\
\hline
\multicolumn{2}{|c|}{Intersection 3} \\
\hline
$N$ & 10 \\
\hline
Vehicle types & 6 flows for cars $(1,\dots,6)$, 4 flows for bicycles $(7,\dots,10)$\\
\hline
Arrival processes & Poisson \\
\hline
Arrival intensities  & 680, 150, 390, 860, 280, 430, 100, 100, 100, 100 \\
(cars/bikes per hour) & \\
\hline
Saturation flow rates  & 1950, 1700, 1850, 1950, 1700, 1850,  \\
(cars/bikes per hour) & 10000,10000,10000, 10000\\
\hline
SCVs of the headways & \rule[-1.5ex]{0cm}{4ex}$\C^2_{B_i}=1$ for cars $(i \leq 6)$, and $\C^2_{B_i}=0$ for bikes $(i\geq7)$\\
\hline
Groups & $\{1,4,8,10\}, \{2,5\}, \{3,6,7,9\}$ \\
\hline
All-red times & $4,2,5$\\
\hline
\end{tabular}
\end{center}

\expandafter\ifx\csname urlstyle\endcsname\relax
  \providecommand{\doi}[1]{DOI: #1}\else
  \providecommand{\doi}{DOI: \begingroup \urlstyle{rm}\Url}\fi

\bibliographystyle{abbrvnat}
%\bibliography{traffic}

\begin{thebibliography}{42}
\providecommand{\natexlab}[1]{#1}
\providecommand{\url}[1]{\texttt{#1}}
\expandafter\ifx\csname urlstyle\endcsname\relax
  \providecommand{\doi}[1]{doi: #1}\else
  \providecommand{\doi}{doi: \begingroup \urlstyle{rm}\Url}\fi

\bibitem[Boon et~al.(2010{\natexlab{a}})Boon, Adan, and
  Boxma]{boonadanboxma2queues2010jdeds}
M.~A.~A. Boon, I.~J. B.~F. Adan, and O.~J. Boxma.
\newblock A two-queue polling model with two priority levels in the first
  queue.
\newblock \emph{Discrete Event Dynamic Systems}, 20\penalty0 (4):\penalty0
  511--536, 2010{\natexlab{a}}.

\bibitem[Boon et~al.(2010{\natexlab{b}})Boon, Adan, and Boxma]{boonpriorities}
M.~A.~A. Boon, I.~J. B.~F. Adan, and O.~J. Boxma.
\newblock A polling model with multiple priority levels.
\newblock \emph{Performance Evaluation}, 67:\penalty0 468--484,
  2010{\natexlab{b}}.

\bibitem[Boon et~al.(2011{\natexlab{a}})Boon, van~der Mei, and
  Winands]{boonapplications2011sorms}
M.~A.~A. Boon, R.~D. van~der Mei, and E.~M.~M. Winands.
\newblock Applications of polling systems.
\newblock \emph{Surveys in Operations Research and Management Science},
  16:\penalty0 67--82, 2011{\natexlab{a}}.

\bibitem[Boon et~al.(2011{\natexlab{b}})Boon, Winands, Adan, and van
  Wijk]{boonapprox2009}
M.~A.~A. Boon, E.~M.~M. Winands, I.~J. B.~F. Adan, and A.~C.~C. van Wijk.
\newblock Closed-form waiting time approximations for polling systems.
\newblock \emph{Performance Evaluation}, 68:\penalty0 290--306,
  2011{\natexlab{b}}.

\bibitem[{Coffman, Jr.} et~al.(1995){Coffman, Jr.}, Puhalskii, and
  Reiman]{coffman95}
E.~G. {Coffman, Jr.}, A.~A. Puhalskii, and M.~I. Reiman.
\newblock Polling systems with zero switchover times: A heavy-traffic averaging
  principle.
\newblock \emph{The Annals of Applied Probability}, 5\penalty0 (3):\penalty0
  681--719, 1995.

\bibitem[{Coffman, Jr.} et~al.(1998){Coffman, Jr.}, Puhalskii, and
  Reiman]{coffman98}
E.~G. {Coffman, Jr.}, A.~A. Puhalskii, and M.~I. Reiman.
\newblock Polling systems in heavy-traffic: A {Bessel} process limit.
\newblock \emph{Mathematics of Operations Research}, 23:\penalty0 257--304,
  1998.

\bibitem[Cohen(1982)]{cohen82}
J.~W. Cohen.
\newblock \emph{The Single Server Queue}.
\newblock North-Holland, Amsterdam, revised edition, 1982.

\bibitem[CROW(2006)]{handboekcrow}
CROW.
\newblock \emph{Handboek verkeerslichtenregelingen}.
\newblock Publicatie 213. CROW kenniscentrum voor verkeer, vervoer en
  infrastructuur, Ede, 2006.
\newblock ISBN 90 6628 444 7.

\bibitem[Dai(1995)]{dai95}
J.~G. Dai.
\newblock On positive {Harris} recurrence of multiclass queueing networks: a
  unified queue via fluid limit models.
\newblock \emph{Annals of Applied Probability}, 5:\penalty0 49--77, 1995.

\bibitem[Dai(1999)]{dai99}
J.~G. Dai.
\newblock Stability of fluid and stochastic processing networks.
\newblock \emph{Miscellanea Publication, No. 9. Centre for Mathematical Physics
  and Stochastics, Aarhus, Denmark. http://www.maphysto.dk/}, 1999.

\bibitem[Dai and Meyn(1995)]{daimey95}
J.~G. Dai and S.~P. Meyn.
\newblock Stability and convergence of moments for multiclass queueing networks
  via fluid models.
\newblock \emph{IEEE Transactions on Automatic Control}, 40:\penalty0
  1889--1904, 1995.

\bibitem[Darroch et~al.(1964)Darroch, Newell, and Morris]{darroch64}
J.~N. Darroch, G.~F. Newell, and R.~W.~J. Morris.
\newblock Queues for a vehicle-actuated traffic light.
\newblock \emph{Operations Research}, 12\penalty0 (6):\penalty0 882--895, 1964.

\bibitem[Dorsman et~al.(2011)Dorsman, van~der Mei, and Winands]{dorsman2010}
J.~L. Dorsman, R.~D. van~der Mei, and E.~M.~M. Winands.
\newblock A new method for deriving waiting-time approximations in polling
  systems with renewal arrivals.
\newblock \emph{Stochastic Models}, 27\penalty0 (2):\penalty0 318--332, 2011.

\bibitem[Fuhrmann(March 1981)]{fuhrmann81}
S.~W. Fuhrmann.
\newblock Performance analysis of a class of cyclic schedules.
\newblock Technical memorandum 81-59531-1, Bell Laboratories, March 1981.

\bibitem[Fuhrmann and Cooper(1985)]{fuhrmanncooper85}
S.~W. Fuhrmann and R.~B. Cooper.
\newblock Stochastic decompositions in the {$M/G/1$} queue with generalized
  vacations.
\newblock \emph{Operations Research}, 33\penalty0 (5):\penalty0 1117--1129,
  1985.

\bibitem[Greenberg et~al.(1988)Greenberg, Leachman, and Wolff]{greenberg88}
B.~S. Greenberg, R.~C. Leachman, and R.~W. Wolff.
\newblock Predicting dispatching delays on a low speed, single track railroad.
\newblock \emph{Transportation Science}, 22:\penalty0 31--38, 1988.

\bibitem[Haijema and van~der Wal(2007)]{haijemavanderwal07}
R.~Haijema and J.~van~der Wal.
\newblock An {MDP} decomposition approach for traffic control at isolated
  signalized intersections.
\newblock \emph{Probability in the Engineering and Informational Sciences},
  22:\penalty0 587--602, 2007.

\bibitem[Heidemann(1994)]{heidemann94}
D.~Heidemann.
\newblock Queue length and delay distributions at traffic signals.
\newblock \emph{Transportation Research Part B}, 28\penalty0 (5):\penalty0
  377--389, 1994.

\bibitem[Lehoczky(1972)]{lehoczky72}
J.~P. Lehoczky.
\newblock Traffic intersection control and zero-switch queues under conditions
  of {Markov} chain dependence input.
\newblock \emph{Journal of Applied Probability}, 9\penalty0 (2):\penalty0
  382--395, 1972.

\bibitem[Levy and Sidi(1990)]{levysidi90}
H.~Levy and M.~Sidi.
\newblock Polling systems: applications, modeling, and optimization.
\newblock \emph{IEEE Transactions on Communications}, 38:\penalty0 1750--1760,
  1990.

\bibitem[Levy and Sidi(1991)]{levysidi91}
H.~Levy and M.~Sidi.
\newblock Polling systems with simultaneous arrivals.
\newblock \emph{IEEE Transactions on Communications}, 39:\penalty0 823--827,
  1991.

\bibitem[Meyn and Down(1994)]{meydow94}
S.~P. Meyn and D.~Down.
\newblock Stability of generalized jackson networks.
\newblock \emph{Annals of Applied Probability}, 4:\penalty0 124--148, 1994.

\bibitem[Miller(1963)]{miller63}
A.~J. Miller.
\newblock Settings for fixed-cycle traffic signals.
\newblock \emph{Operational Research Quarterly}, 14:\penalty0 373--386, 1963.

\bibitem[Newell(1965)]{newell65}
G.~F. Newell.
\newblock Approximation methods for queues with application to the fixed-cycle
  traffic light.
\newblock \emph{SIAM Review}, 7\penalty0 (2):\penalty0 223--240, 1965.

\bibitem[Newell(1969)]{newell1}
G.~F. Newell.
\newblock Properties of vehicle-actuated signals: {I.} one-way streets.
\newblock \emph{Transportation Science}, 3\penalty0 (2):\penalty0 31--52, 1969.

\bibitem[Newell(1998)]{newell98}
G.~F. Newell.
\newblock The rolling horizon scheme of traffic signal control.
\newblock \emph{Transportation Research Part A}, 32\penalty0 (1):\penalty0
  39--44, 1998.

\bibitem[Newell and Osuna(1969)]{newell2}
G.~F. Newell and E.~E. Osuna.
\newblock Properties of vehicle-actuated signals: {II.} two-way streets.
\newblock \emph{Transportation Science}, 3\penalty0 (2):\penalty0 99--125,
  1969.

\bibitem[Olsen and van~der Mei(2003)]{olsenvdmei03}
T.~L. Olsen and R.~D. van~der Mei.
\newblock Polling systems with periodic server routeing in heavy traffic:
  distribution of the delay.
\newblock \emph{Journal of Applied Probability}, 40:\penalty0 305--326, 2003.

\bibitem[Olsen and van~der Mei(2005)]{olsenvdmei05}
T.~L. Olsen and R.~D. van~der Mei.
\newblock Polling systems with periodic server routing in heavy traffic:
  renewal arrivals.
\newblock \emph{Operations Research Letters}, 33:\penalty0 17--25, 2005.

\bibitem[Resing(1993)]{resing93}
J.~A.~C. Resing.
\newblock Polling systems and multitype branching processes.
\newblock \emph{Queueing Systems}, 13:\penalty0 409 -- 426, 1993.

\bibitem[Takagi(1988)]{takagi1988qap}
H.~Takagi.
\newblock Queuing analysis of polling models.
\newblock \emph{ACM Computing Surveys (CSUR)}, 20:\penalty0 5--28, 1988.

\bibitem[Tijms(1994)]{tijms94}
H.~C. Tijms.
\newblock \emph{Stochastic models: an algorithmic approach}.
\newblock Wiley, Chichester, 1994.

\bibitem[TRB(2000)]{hcm2000}
TRB.
\newblock \emph{Highway Capacity Manual 2000}.
\newblock Transportation Research Board, 2000.

\bibitem[van~den Broek et~al.(2006)van~den Broek, van Leeuwaarden, Adan, and
  Boxma]{vandenbroek06}
M.~S. van~den Broek, J.~S.~H. van Leeuwaarden, I.~J. B.~F. Adan, and O.~J.
  Boxma.
\newblock Bounds and approximations for the fixed-cycle traffic-light queue.
\newblock \emph{Transportation Science}, 40\penalty0 (4):\penalty0 484--496,
  2006.

\bibitem[van~der Mei(2001)]{vdmeipollingbatch2001}
R.~D. van~der Mei.
\newblock Polling systems with simultaneous batch arrivals.
\newblock \emph{Stochastic Models}, 17\penalty0 (3):\penalty0 271--292, 2001.

\bibitem[van~der Mei and Winands(2008)]{vdmeiwinands08}
R.~D. van~der Mei and E.~M.~M. Winands.
\newblock A note on polling models with renewal arrivals and nonzero
  switch-over times.
\newblock \emph{Operations Research Letters}, 36:\penalty0 500--505, 2008.

\bibitem[van Leeuwaarden(2006)]{leeuwaarden06}
J.~S.~H. van Leeuwaarden.
\newblock Delay analysis for the fixed-cycle traffic-light queue.
\newblock \emph{Transportation Science}, 40\penalty0 (2):\penalty0 189--199,
  2006.

\bibitem[Vishnevskii and Semenova(2006)]{vishnevskiisemenova06}
V.~M. Vishnevskii and O.~V. Semenova.
\newblock Mathematical methods to study the polling systems.
\newblock \emph{Automation and Remote Control}, 67\penalty0 (2):\penalty0
  173--220, 2006.

\bibitem[Vlasiou and Yechiali(2008)]{vlasiouyechiali07}
M.~Vlasiou and U.~Yechiali.
\newblock {$M/G/\infty$} polling systems with random visit times.
\newblock \emph{Probability in the Engineering and Informational Sciences},
  22:\penalty0 81--106, 2008.

\bibitem[Webster(1958)]{webster58}
F.~V. Webster.
\newblock Traffic signal settings.
\newblock Technical Paper~39, Road Research Laboratory, 1958.

\bibitem[Whitt(1989)]{whitt89}
W.~Whitt.
\newblock An interpolation approximation for the mean workload in a {$GI/G/1$}
  queue.
\newblock \emph{Operations Research}, 37\penalty0 (6):\penalty0 936--952, 1989.

\bibitem[Yamashita et~al.(2006)Yamashita, Ishizuka, and Suzuki]{yamashita06}
H.~Yamashita, Y.~Ishizuka, and S.~Suzuki.
\newblock Mean and variance of waiting time and their optimization for
  alternating traffic control systems.
\newblock \emph{Mathematical Programming}, 108\penalty0 (2--3):\penalty0
  419--433, 2006.

\end{thebibliography}

\end{document}